\newcommand\cyr{%
\renewcommand\rmdefault{wncyr}%
\renewcommand\sfdefault{wncyss}%
\renewcommand\encodingdefault{OT2}%
\normalfont \selectfont} \DeclareTextFontCommand{\textcyr}{\cyr}
\newcommand{\be}{\begin{equation}}
\newcommand{\ee}{\end{equation}}
\newcommand{\bes}{\begin{equation*}}
\newcommand{\ees}{\end{equation*}}
\newcommand{\inn}[2]{{\langle #1,#2 \rangle}}
\newcommand{\C}{\mathbb{C}}
\newcommand{\bH}{\mathbb{H}}
\newcommand{\N}{\mathbb{N}}
\newcommand{\R}{\mathbb{R}}
\newcommand{\Z}{\mathbb{Z}}
\newcommand{\cH}{\mathcal{H}}
\newcommand{\cO}{\mathcal{O}}
\newcommand{\sF}{\mathscr{F}}
\renewcommand{\Re}{\mathop{\mathrm{Re}}}
\renewcommand{\Im}{\mathop{\mathrm{Im}}}
\newcommand{\Span}{\mathrm{span\,}}
\newcommand{\clos}{\mathrm{clos}}
\newcommand{\nl}{\vskip 10pt\noindent}
\newcommand{\wh}[1]{{\widehat{#1}}}
\newcommand{\mcK}{\mathcal{K}}%
\newcommand{\mcH}{\mathcal{H}}%
\renewcommand{\rmdefault}{cmr} 
\renewcommand{\sfdefault}{cmr} 
\newtheorem{theorem}{Theorem}
\theoremstyle{plain}
\newtheorem{corollary}{Corollary}
\newtheorem{lemma}{Lemma}
\newtheorem{proposition}{Proposition}
\newtheorem{remark}{Remark}
\newtheorem*{notation*}{Notation}
\numberwithin{equation}{section}
\DeclareMathOperator\Sc{Sc}
\DeclareMathOperator\Ve{Vec}
\DeclareMathOperator\re{Re}
\DeclareMathOperator\im{Im}
\DeclareMathOperator\gr{graph}
\DeclareMathOperator\const{const.}
\newcommand{\mydot}{\,\cdot\,}
\begin{document}

\title[Quaternionic Fundamental Splines: Interpolation and Sampling]{Quaternionic Fundamental Cardinal Splines: Interpolation and Sampling}

\author{Jeffrey A. Hogan${}^{1,2,3}$}
\thanks{${}^1$Corresponding author}
\thanks{${}^2$Research partially supported by Australian Research Council grant DP160101537}
\address{School of Mathematical and Physical Sciences, Mathematics Bldg V123, University of Newcastle, University Drive, Callaghan NSW 2308, Australia}
\email{jeff.hogan@newcastle.edu.au}

\author{Peter R. Massopust${}^3$}
\thanks{${}^3$Research partially supported by Bavarian Research Alliance Grant BaylntAn\_UPA\_2017\_76}
\address{Centre of Mathematics, Research Unit M15, Technical University of Munich, Boltzmannstr. 3, 85748 Garching b. Munich, Germany}
\email{massopust@ma.tum.de}

\begin{abstract}
B-splines $B_{q}$ of quaternionic order $q$, for short quaternionic B-splines, are quaternion-valued  piecewise M\"{u}ntz polynomials whose scalar parts interpolate the classical Schoenberg splines $B_{n}$, $n\in\N$, with respect to degree and smoothness. They in general do not satisfy the interpolation property $B_{q}(n) = \delta_{n,0}$, $n\in\Z$. However, the application of the interpolation filter $(\sum\limits_{k\in\Z} \widehat{B_q}(\xi+2 \pi k))^{-1}$---if well-defined---in the frequency domain yields a cardinal fundamental spline of quaternionic order that  satisfies the interpolation property. We handle the ambiguity of the quaternion-valued exponential function appearing in the denominator of the interpolation filter and relate the filter to interesting properties of a quaternionic Hurwitz zeta function and the existence of complex quaternionic inverses. Finally, we show that the cardinal fundamental splines of quaternionic order fit into the setting of Kramer's Lemma and allow for a family of sampling, respectively, interpolation series.
\vskip 12pt\noindent
\textbf{Keywords and Phrases:} Quaternions, Clifford analysis, fundamental cardinal spline, Hilbert module, sampling, Kramer's lemma
\vskip 6pt\noindent
\textbf{AMS Subject Classification (2010):} 15A66, 30G35, 41A05, 42C40, 65D07, 94A20
\end{abstract}

\maketitle
\section{Introduction}\label{sec1}
In \cite{HMass}, a new class of B-splines with quaternionic order (quaternionic B-splines for short) were introduced with the intention to obtain interpolants and approximants for data that require a multi-channel description. For example, seismic data has four channels, each associated with a different kind of seismic wave: the so-called P (Compression), S (Shear), L (Love) and R (Rayleigh) waves. Similarly, the colour value of a pixel in a colour image has three components -- the red, green and blue channels. In order to perform the tasks of processing multi-channel signals and data, an appropriate set of analyzing basis functions is required. These basis functions should have the same analytic properties as those enjoyed by classical B-splines but should in addition be able to describe  multi-channel structures. In \cite{O1,O2}, a set of analyzing functions based on wavelets and Clifford-analytic methodologies were introduced in an effort to process four channel seismic data. A multiresolution structure for the construction of wavelets on the plane for the analysis of four-channel signals was outlined in \cite{HM}. 

The Schoenberg cardinal spline interpolation problem on the real line $\R$ consists of finding a spline $L$ with the property $L(k) = \delta_{k,0}$, for all $k\in \Z$, and which is a linear combination of shifts of a cardinal B-spline $B$: $L = \sum\limits_{k\in\Z} c_k B(\mydot - k)$. It is therefore natural  to ask whether the newly introduced quaternionic B-splines $B_q$ also solve a cardinal interpolation problem. In the classical case, only fundamental cardinal splines of even order exist and one should expect that some restrictions on $q$ in the quaternionic setting will occur as well.

The solution of the cardinal spline interpolation problem in the quaternionic setting is based on finding zero-free regions of a certain linear combination of quaternion-valued Hurwitz zeta functions. These zero-free regions are found by considering zero-free regions for associated complex-valued Hurwitz zeta functions. In the complex-valued case, such zero-free regions were derived in \cite{FGMS} and they will determine nonempty regions in the real quaternionic algebra $\mathbb{H}_\R$ for which the quaternionic cardinal spline interpolation problem has a unique solution.

Associated with the cardinal spline interpolation problem is Kramer's sampling theorem which gives conditions under which a function can be represented as an infinite series involving sampled values and a cardinal spline-type sampling function. We will extend Kramer's lemma to the complex quaternionic setting by first deriving a Riesz representation theorem for left Hilbert modules.

\section{Notation and Preliminaries}
The real, associative algebra of real quaternions ${\mathbb H}_{\mathbb R}$ is given by
\[
{\mathbb H}_{\mathbb R} :=\left\{a+\sum\limits_{i=1}^3v_ie_i : a, v_1, v_2, v_3\in{\mathbb R}\right\},
\]
where the imaginary units $e_1, e_2, e_3$ satisfy $e_1^2=e_2^2=e_3^2=-1$, $e_1e_2=e_3$, $e_2e_3=e_1$ and $e_3e_1=e_2$. Because of these  relations, ${\mathbb H}_\R$ is a non-commutative algebra. 

Each quaternion $q=a+\sum\limits\limits_{i=1}^3v_ie_i$ may be decomposed as $q=\Sc q + \Ve q$ where $\Sc q :=a$ is the {\it scalar part} of $q$ and $\Ve q :=v=\sum\limits\limits_{i=1}^3v_ie_i$ is the {\it vector part} of $q$. The {\it conjugate} $\overline{q}$ of the real quaternion $q=a+v$ is the quaternion $\overline{q}=a-v$. Note that $q\overline{q}=\overline{q}q=|q|^2=a^2+|v|^2=a^2+\sum\limits\limits_{i=1}^3v_i^2$. 
An element $q$ of $\bH_\R$ is called a \emph{pure quaternion} if $\Sc q = 0$. Note also that if $v=\sum\limits\limits_{j=1}^3v_je_j$ and $w=\sum\limits\limits_{j=1}^3w_je_j$ are pure quaternions, then 
\begin{equation}
vw=-\langle v,w\rangle +v\wedge w\label{vec mult},
\end{equation}
where $\langle v,w\rangle :=\sum\limits\limits_{j=1}^3v_jw_j$ is the scalar product of $v$ and $w$ and 
$$v\wedge w :=(v_2w_3-v_3w_2)e_1+(v_3w_1-v_1w_3)e_2+(v_1w_2-v_2w_1)e_3$$ 
is the vector (cross) product of $v$ and $w$.

If $q=q_0+\sum\limits\limits_{i=1}^3q_ie_i\in{\mathbb H}_{\mathbb C}:=\left\{a+\sum\limits\limits_{i=1}^3v_ie_i : a, v_1, v_2, v_3\in{\mathbb C}\right\}$ is a complex quaternion, we define the conjugate $\overline{q}$ of $q$ by $\overline q :=\overline{q_0}-\sum\limits\limits_{i=1}^3\overline{q_i}e_i$, where $\overline{q_i}$ is the complex conjugate of the complex number $q_i$. If $p=p_0+\sum\limits\limits_{i=1}^3p_ie_i\in{\mathbb H}_{\mathbb C}$, we define the inner product $\langle p,q\rangle$ to be the complex number  $\langle p,q\rangle :=\sum\limits\limits_{i=0}^3p_1\overline q_i=\Sc p\overline{q}$. We also define $e^q$ by the usual series: $e^q :=\sum\limits\limits_{j=0}^\infty \dfrac{q^j}{j!}$. We require the following bounds on $|e^q|$, which we state without proof.
\begin{lemma}\label{exp bounds} Let $q=a+v\in{\mathbb H}_{\mathbb R}$. Then we have
\begin{enumerate}
\item[1.] $|e^q|=e^a\leq e^{|q|}$.
\item[2.] If $z\in{\mathbb C}$ and $q'=zq\in{\mathbb H}_{\mathbb C}$, then $|e^{q'}|\leq e^{\sqrt 2|q'|}$.
\end{enumerate}
\end{lemma}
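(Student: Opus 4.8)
The plan is to treat the two parts separately, in both cases exploiting the fact that the vector part of a quaternion squares to a negative scalar, which collapses the exponential series into a closed trigonometric form (a quaternionic Euler formula). For Part~1, I would write $q=a+v$ with $a=\Sc q\in\R$ and $v=\Ve q$. Since $a$ is a real scalar it commutes with $v$, so $e^q=e^a e^v$. Because $v$ is pure, $v^2=-\langle v,v\rangle=-|v|^2$ by \eqref{vec mult}, and the even/odd parts of $e^v=\sum_j v^j/j!$ collapse to
\[
e^v=\cos|v|+\frac{v}{|v|}\sin|v|\qquad(v\neq0),
\]
with the fraction read as $0$ when $v=0$. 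As $v/|v|$ is a unit pure quaternion, the scalar and vector parts are orthogonal and $|e^v|^2=\cos^2|v|+\sin^2|v|=1$. Hence $|e^q|=|e^a|\,|e^v|=e^a$, and since $a\le\sqrt{a^2+|v|^2}=|q|$ we get $e^a\le e^{|q|}$, proving~1.

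For Part~2 I would write $z=\alpha+i\beta$ (with $\alpha,\beta\in\R$ and $i$ the central complex unit) and $q=a+v$ as above, with $r:=|v|$. Then $q'=zq=za+zv$, where $za\in\C$ is a central scalar and $zv$ is a complex vector part. Pulling out the central scalar gives $e^{q'}=e^{za}e^{zv}$. Setting $w:=zv$ and $\zeta:=z\,r$, one has $w^2=z^2v^2=-\zeta^2$, so exactly as before
\[
e^{w}=\cos\zeta+\frac{\sin\zeta}{\zeta}\,w .
\]
Here I must use that on $\mathbb H_\C$ the relevant norm is the Hermitian $\ell^2$-norm $|p|^2=\sum_{i=0}^3|p_i|^2$, for which scalar and vector parts are orthogonal. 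Applying this to $e^{q'}=e^{za}\cos\zeta+e^{za}\tfrac{\sin\zeta}{\zeta}w$ and using $|w|^2=|z|^2r^2=|\zeta|^2$ together with the identity $|\cos\zeta|^2+|\sin\zeta|^2=\cosh(2\Im\zeta)$ for complex $\zeta$, I obtain
\[
|e^{q'}|^2=e^{2\Re(za)}\bigl(|\cos\zeta|^2+|\sin\zeta|^2\bigr)=e^{2a\alpha}\cosh(2r\beta).
\]

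The final step is an elementary estimate. Since $q$ has real coefficients, $|q'|=|z|\,|q|$; taking logarithms and using $\cosh x\le e^{|x|}$ gives
\[
\ln|e^{q'}|=a\alpha+\tfrac12\ln\cosh(2r\beta)\le a\alpha+r|\beta|\le \sqrt{a^2+r^2}\,\sqrt{\alpha^2+\beta^2}=|q|\,|z|=|q'|,
\]
the middle inequality being Cauchy--Schwarz applied to $(|a|,r)$ and $(|\alpha|,|\beta|)$; as $|q'|\le\sqrt2\,|q'|$ this yields the stated bound (in fact the sharper $|e^{q'}|\le e^{|q'|}$). I expect the only genuinely delicate points to be the orthogonality of scalar and vector parts for the \emph{complex} norm---this is what forbids naively factoring $|e^{q'}|$ as a product of norms, since the $\mathbb H_\C$ norm is not multiplicative---and the evaluation $|\cos\zeta|^2+|\sin\zeta|^2=\cosh(2\Im\zeta)$. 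The constant $\sqrt2$ in the statement is not forced by this computation; it is precisely the factor produced by the coarser but more general route through the submultiplicative bound $|pq|\le\sqrt2\,|p|\,|q|$ on $\mathbb H_\C$ (equivalently, the identification $\mathbb H_\C\cong M_2(\C)$ with the norm rescaled by $\sqrt2$), which gives $|e^{q'}|\le e^{\sqrt2|q'|}$ for \emph{every} complex quaternion, not only those of the form $zq$.
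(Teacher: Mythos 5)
The paper states this lemma explicitly without proof (``which we state without proof''), so there is no argument of the authors' to compare yours against; the only question is whether your argument is correct, and it is. Part 1 is the standard quaternionic Euler-formula computation and is fine. For Part 2, each of the steps that could have gone wrong checks out: $za$ is a central element of ${\mathbb H}_{\mathbb C}$, so the factorization $e^{q'}=e^{za}e^{zv}$ is legitimate; $(zv)^2=z^2v^2=-z^2|v|^2$ since $z$ commutes with $v$, so the series for $e^{zv}$ collapses as you claim; the norm you use, $|p|^2=\sum_{i=0}^3|p_i|^2$, is exactly the paper's norm (it equals $\Sc(p\overline{p})$ with the paper's ${\mathbb H}_{\mathbb C}$-conjugation), and it does split orthogonally over scalar and vector parts, which justifies $|e^{q'}|^2=e^{2a\alpha}\left(|\cos\zeta|^2+|\sin\zeta|^2\right)=e^{2a\alpha}\cosh(2r\beta)$; and the final chain via $\cosh x\le e^{|x|}$, Cauchy--Schwarz, and $|zq|=|z|\,|q|$ (valid because $q$ has real components) is correct. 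Your computation actually yields the sharper bound $|e^{zq}|\le e^{|zq|}$, which trivially implies the stated one. Your closing diagnosis is also accurate: the constant $\sqrt 2$ in the statement is not produced by this direct route but is the natural constant in the submultiplicative estimate $|pq|\le\sqrt 2\,|p|\,|q|$ on ${\mathbb H}_{\mathbb C}$ (visible through ${\mathbb H}_{\mathbb C}\cong M_2({\mathbb C})$ with the Frobenius norm), which gives $|e^{p}|\le e^{\sqrt 2 |p|}$ for \emph{all} complex quaternions, not just those of the form $zq$ with $q\in{\mathbb H}_{\mathbb R}$; the lemma as stated is thus a weaker but more robust version of what you proved.
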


For $q=a+v=a+\sum\limits_{j=1}^3v_je_j\in{\mathbb H}_{\mathbb R}$ and
$z\in \C^* := {\mathbb C}\setminus\{0\}$, the quaternionic power $z^q\in{\mathbb H}_{\mathbb C}$ is defined by
\be\label{eq2.1}
z^q := z^a\bigg[\cos (|v|\log z)+\frac{v}{|v|}\sin (|v|\log z)\bigg].
\ee
This definition of $z^q$ allows for the usual differentiation and integration rules:
\begin{equation}
\frac{d}{dz} z^q = q z^{q-1}\qquad\text{and}\qquad \int z^q dz = \frac{z^{q+1}}{q+1} + \const, \quad q\neq -1.\label{diff/int formulae}
\end{equation}
In general, however, the semigroup property $z^{q_1}z^{q_2}=z^{q_1+q_2}$ fails to hold. (See, for instance, \cite{HMass}.)

If ${\mathbb F}\in \{{\mathbb R}, {\mathbb C}, {\mathbb H}_{\mathbb R},{\mathbb H}_{\mathbb C}\}$ and $1\leq p<\infty$, then $L^p({\mathbb R},{\mathbb F})$ denotes the Banach space of measurable functions $f:{\mathbb R}\to{\mathbb F}$ for which $\int\limits_{-\infty}^\infty |f(x)|^p\, dx<\infty$, where the meaning of $|f(x)|$ is dependent on ${\mathbb F}$. The Banach space $L^\infty ({\mathbb R},{\mathbb F})$ is defined similarly.  

On $L^2({\mathbb R},{\mathbb C})$, we define an inner product by  $\langle f,g\rangle =\int\limits_{-\infty}^\infty f(x)\overline{g(x)}\, dx$, and on $L^2({\mathbb R},{\mathbb H}_{\mathbb C})$ by
\begin{equation}
\langle f,g\rangle :=\Sc\bigg(\int_{-\infty}^\infty f(x)\overline{g(x)}\, dx\bigg)=\int_{-\infty}^\infty\langle f(x),g(x)\rangle\, dx.\label{IP complex quat}
\end{equation}

The  Fourier-Plancherel transform $\sF$ is defined on  $L^1 (\R, {\mathbb F})$ by 
\begin{gather*}
(\sF f) (\xi) := \wh{f} (\xi) := \int_\R f(x) e^{-i \xi x} dx
\end{gather*}
and may be extended to $L^2({\R},{\mathbb F})$, on which it becomes a multiple of a unitary mapping: $\langle\sF f,\sF g\rangle =2\pi \langle f,g\rangle$. 
\section{Quaternionic B-splines}
In this section, we state the definition of  B-splines of quaternionic order and briefly review some of their properties. More details and proofs can be found in \cite{HMass}.

The complex B-spline $B_z$ $(z\in{\mathbb C})$ is defined in the Fourier domain to be the function $\wh{B_z}:\R\to{\mathbb C}$ given by 
\[
\wh{B_z}(\xi )=\bigg(\dfrac{1-e^{-i\xi}}{i\xi}\bigg)^z, \quad \Re z > 1,
\]
and in the time domain by 
\[
B_z(t)=\dfrac{1}{\Gamma (z)}\sum_{k=0}^\infty\binom{z}{k}(t-k)_+^{z-1},\quad t\in{\mathbb R},\;\;\Re z >1.
\]
(See \cite{FBU}, \cite{HMass}.)

A B-spline $B_q$ of quaternionic order $q$ (for short {\it quaternionic B-spline}) is defined in the Fourier domain to be the function $\wh{B_q}:\R\to\bH_{\mathbb C}$ given by 
\begin{equation}
\wh{B_q} (\xi) := \left(\frac{1-e^{-i \xi}}{i \xi}\right)^q, \quad \Sc q > 1.\label{qB}
\end{equation}
Setting $\Xi (\xi) := \dfrac{1-e^{-i \xi}}{i \xi}$, 
we obtain the precise meaning of (\ref{qB}), namely,
\be\label{Bhat}
\wh{B_q} (\xi) = \Xi (\xi)^{\Sc q} \left(\cos (|v|\log \Xi (\xi))+\frac{v}{|v|}\sin (|v|\log \Xi (\xi))\right).
\ee
The function $\Xi$ has a removable singularity at $\xi = 0$ with $\Xi (0) = 1$.  It follows from $\re \Xi (\xi) = \xi^{-1}\,\sin \xi$ and $\im \Xi (\xi) = \xi^{-1}\,(1-\cos\xi)$, that $\gr\Xi\cap (\R^-\times \{0\}) = \emptyset$. Hence, $\Xi$ and therefore $\wh{B_q}$ are well-defined when $-\pi < \arg\Xi \leq \pi$. Equation \eqref{Bhat} also implies that $\wh{B_q} \in L^2(\R,\bH_{\mathbb C})$ for a fixed $q$ with $\Sc q > \frac12$ and in $L^1(\R,\bH_{\mathbb C})$ for a fixed $q$ with $\Sc q > 1$ as the fractional B-splines \cite{UB} satisfy these conditions. 
Note that $\wh{B_q}\in L^1(\R,\bH_{\mathbb C})$ implies that $B_q$ is uniformly continuous on $\R$.

The following results are a direct consequence of the corresponding properties of fractional B-splines \cite{UB}. To this end, for a real number $s\geq 0$ and $1\leq p\le \infty$,  the Bessel potential space $H^{s,p}({\mathbb R},{\mathbb H}_\C)$ is given by
\[
H^{s,p}({\mathbb R},{\mathbb H}_{\mathbb C}):=\left\{f\in L^p({\mathbb R},{\mathbb H}_{\mathbb C}) : \sF^{-1}[(1+|\xi |^2)^{s/2}\sF f]\in L^p({\mathbb R},{\mathbb H}_{\mathbb C})\right\}.
\]
\begin{proposition} 
Let $B_q$ be a quaternionic B-spline with $\Sc q >\frac12$. Then $B_q$ enjoys the following properties:
\begin{enumerate}
\item[\emph{(i)}] {Decay}: $B_q \in \cO(|t|^{-\Sc q-1})$ as $|t|\to \infty$.
\item[\emph{(ii)}] {Smoothness}: $B_q\in H^{s,p} (\R,\bH_{\mathbb C})$ for $1\leq p\leq\infty$ and $0\leq s < \Sc q + \frac1p$.
\item[\emph{(iii)}] {Reproduction of Polynomials}: $B_q$ reproduces polynomials up to order $\lceil\Sc q\rceil$, where the ceiling function $\lceil\mydot\rceil:\R\to \Z$ is given by $r\mapsto \min\{n\in \Z : n\geq r\}$.
\end{enumerate}
\end{proposition}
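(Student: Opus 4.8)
The plan is to exhibit $B_q$ as a fixed $\bH_\C$-linear combination of two \emph{complex} (fractional) B-splines whose complex orders both have real part equal to $\Sc q$, and then to transport properties (i)--(iii) term by term: each is already known for complex B-splines by \cite{FBU,UB}, and each is stable under a finite linear combination with constant coefficients. Writing $a:=\Sc q$ and $v:=\Ve q$, I would start from \eqref{Bhat} and apply $\cos w=\tfrac12(e^{iw}+e^{-iw})$, $\sin w=\tfrac1{2i}(e^{iw}-e^{-iw})$ with $w=|v|\log\Xi(\xi)$. The key algebraic point is that $\Xi(\xi)^a\,e^{\pm i|v|\log\Xi(\xi)}=\Xi(\xi)^{a\pm i|v|}$ holds honestly as an identity of \emph{complex} powers: the semigroup failure flagged after \eqref{diff/int formulae} does not intervene here, because $\Xi^a$ is a complex scalar and the ordinary complex exponential satisfies $e^{w_1}e^{w_2}=e^{w_1+w_2}$. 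This yields
\begin{equation*}
\wh{B_q}(\xi)=\tfrac12\bigl(\wh{B_{a+i|v|}}(\xi)+\wh{B_{a-i|v|}}(\xi)\bigr)+\frac{v}{2i|v|}\bigl(\wh{B_{a+i|v|}}(\xi)-\wh{B_{a-i|v|}}(\xi)\bigr),
\end{equation*}
where $B_{a\pm i|v|}$ denotes the complex B-spline of order $a\pm i|v|\in\C$. Inverting the Fourier transform gives $B_q=\alpha\,B_{a+i|v|}+\beta\,B_{a-i|v|}$ with constant complex-quaternion coefficients $\alpha=\tfrac12+\tfrac{v}{2i|v|}$, $\beta=\tfrac12-\tfrac{v}{2i|v|}$, so in particular $\alpha+\beta=1$. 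The decisive observation is that $\Re(a\pm i|v|)=\Sc q$.

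The hypothesis $\Sc q>\tfrac12$ then guarantees $\Re(a\pm i|v|)>\tfrac12$, so both $B_{a\pm i|v|}$ are genuine complex B-splines to which \cite{FBU,UB} apply. For (i), each term decays like $\cO(|t|^{-\Re(a\pm i|v|)-1})=\cO(|t|^{-\Sc q-1})$, and the combination inherits the (common) slower rate. For (ii), I would note that $H^{s,p}(\R,\bH_\C)$ is a module over the constant quaternions: the Bessel multiplier $(1+|\xi|^2)^{s/2}$ is scalar and the central complex unit $i$ commutes with $\alpha,\beta$, so $\sF^{-1}[(1+|\xi|^2)^{s/2}\sF(\alpha f)]=\alpha\,\sF^{-1}[(1+|\xi|^2)^{s/2}\sF f]$, and left multiplication by a constant quaternion preserves $L^p$. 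Since each $B_{a\pm i|v|}\in H^{s,p}$ exactly for $s<\Re(a\pm i|v|)+\tfrac1p=\Sc q+\tfrac1p$, the combination $B_q$ lies in the same space for the same range.

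Property (iii) is where I expect the only real care to be needed, so it is the main obstacle. Polynomial reproduction is governed by the Strang--Fix conditions, i.e.\ by the nonvanishing of $\wh{B_q}$ at the origin together with the vanishing order of $\wh{B_q}$ at the points $2\pi k$, $k\in\Z\setminus\{0\}$. Since $\Xi$ has simple zeros precisely at $2\pi\Z\setminus\{0\}$ and $\Xi(0)=1$, the complex factor $\wh{B_{a\pm i|v|}}(\xi)=\Xi(\xi)^{a\pm i|v|}$ vanishes to order $\lceil\Re(a\pm i|v|)\rceil=\lceil\Sc q\rceil$ at each $2\pi k\neq0$ while taking the value $1$ at the origin. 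Because both orders share the \emph{same} real part, their vanishing orders coincide, so $\wh{B_q}$ vanishes to order $\lceil\Sc q\rceil$ at every $2\pi k\neq0$; and $\wh{B_q}(0)=\alpha+\beta=1\neq0$. Thus the Strang--Fix conditions of order $\lceil\Sc q\rceil$ survive the linear combination. The precise thing one must verify is exactly this: that the quaternion coefficients $\alpha,\beta$ neither disturb the common zero structure at $2\pi\Z\setminus\{0\}$ nor cancel the value at the origin. Granting that, (iii) follows, completing the reduction of all three properties to the corresponding statements for complex fractional B-splines.
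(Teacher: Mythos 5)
Your proposal is correct, but it proceeds differently from the paper, which in fact offers no detailed proof of this proposition: the authors simply declare the three properties to be ``a direct consequence of the corresponding properties of fractional B-splines'' \cite{UB} --- implicitly a comparison of moduli, since the trigonometric factor in \eqref{Bhat} is bounded by constants of $\cosh(\pi|v|)$ type, so $|\wh{B_q}|$ is controlled by the real-order transform $|\Xi|^{\Sc q}$ --- and then they supply a separate, self-contained Leibnitz-formula argument yielding only the weaker decay $|B_q(t)|\leq c_m|t|^{-m}$ for integers $m<\Sc q+1$, which they deem sufficient for their later needs. You instead prove the exact Fourier-domain identity $\wh{B_q}=\alpha\,\wh{B_{z}}+\beta\,\wh{B_{\overline{z}}}$ with $z=a+i|v|$: your coefficients $\alpha,\beta$ are precisely the idempotents $\chi_\mp(v)=\tfrac12\bigl(1\mp i\,v/|v|\bigr)$ that the paper introduces only in Section 5, where the identical projector decomposition of a quaternionic power into a conjugate pair of complex powers is derived for the zeta function; and, after Fourier inversion, your decomposition is equivalent to the paper's structure formula of Theorem \ref{thm1}, $B_q=\Re B_z+\tfrac{v}{|v|}\Im B_z$, which the paper proves later by a longer time-domain Pochhammer/Gamma computation. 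Your route buys exactness where the paper has only estimates: property (iii) genuinely needs the zero structure of $\wh{B_q}$ on $2\pi\Z\setminus\{0\}$ and its value at the origin, not modulus bounds, and your observation that both complex orders share the real part $\Sc q$ while $\alpha+\beta=1$ is exactly the right point. The remaining care you flagged can be closed using $\chi_+(v)\chi_-(v)=0$ and the centrality of $\C$ in $\bH_\C$: if $\sum_k c_k B_z(x-k)=P$ with $c_k\in\C$ and $P$ real, then $\sum_k\bigl(\chi_-(v)c_k+\chi_+(v)\overline{c_k}\,\bigr)B_q(x-k)=\chi_-(v)P+\chi_+(v)P=P$. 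The only cost relative to the paper is that your argument rests on the decay, $H^{s,p}$, and fractional Strang--Fix statements for the complex B-splines $B_{a\pm i|v|}$ taken from \cite{FBU,UB}; but that is precisely the external input the paper itself invokes, so you are on equal footing there, with the added benefit that your transfer mechanism is explicit.
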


Of particular interest in this paper is property (i) of the proposition. A proof can be obtained from \cite{UB}. Here, we give a less elaborate argument that gives a weaker decay estimate which is nonetheless sufficient for our purposes.

With $D :=\dfrac{1}{i}\dfrac{d}{d\xi}$ and $m$ a non-negative integer, Leibnitz's formula gives
$$D^m\widehat{B_q}(\xi )=i^{-m}\sum_{k=0}^m\binom{m}{k}c_{q,m,k}(1-e^{-i\xi})^{q-k}\xi^{-q-m+k}$$
with $c_{q,m,k}$ $(0\leq k\leq m)$ quaternionic constants. Therefore, for all integers $\ell\geq 1$,
\begin{align*}
\int_{2\pi (\ell -1/2)}^{2\pi (\ell +1/2)}|D^m\widehat{B_q}(\xi )|\, d\xi&\leq\sum_{k=0}^mc_{q,m,k}\int_{2\pi (\ell -1/2)}^{2\pi (\ell +1/2)}|1-e^{-i\xi}|^{\Sc q-k}|\xi |^{-\Sc q-m+k}\\
&\leq\sum_{k=0}^mc_{q,m,k}(\ell -1/2)^{-\Sc q-m+k}\int_{2\pi (\ell -1/2)}^{2\pi (\ell +1/2)}|1-e^{-i\xi}|^{\Sc q-k}\, d\xi\\
&\leq\sum_{k=0}^mc_{q,m,k}'(\ell -1/2)^{-\Sc q-m+k}\leq c_{q,m}(\ell -1/2)^{-\Sc q}
\end{align*}
provided $\Sc q -m>-1$. Therefore,
$$\int_{-\infty}^\infty |D^m\widehat{B_q}(\xi )|\, d\xi\leq 2c_{q,m}\sum_{\ell =1}^\infty (\ell -1/2)^{-\Sc q}<\infty$$
provided $\Sc q>1$ and $m<\Sc q+1$. Hence, for this range of values of $q$ and $m$ we have $B_q(t)\leq c_m|t|^{-m}$ for all $t\in{\mathbb R}$. Thus, we have proved the following result.

\begin{proposition} Let $\Sc q>1$, then 
\begin{enumerate}
\item[(i)] $B_q\in L^1({\mathbb R})$.
\item[(ii)] The Fourier series $\sum\limits_{k=-\infty}^\infty B_q(k)e^{ik\xi}$ is absolutely convergent.
\end{enumerate}
\end{proposition}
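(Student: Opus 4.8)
The plan is to read off both statements directly from the quantitative decay bound established in the computation immediately preceding the proposition. That computation shows that whenever $\Sc q > 1$ and the integer $m$ satisfies $m < \Sc q + 1$, one has $\int_\R |D^m \wh{B_q}(\xi)|\,d\xi < \infty$, and hence, via the Fourier inversion relation between $D^m = (i^{-1}d/d\xi)^m$ in the frequency domain and multiplication by $t^m$ in the time domain, the pointwise estimate $|B_q(t)| \leq c_m |t|^{-m}$ for $t \neq 0$. The key observation is that for \emph{every} $q$ with $\Sc q > 1$ the choice $m = 2$ is admissible, since $2 < \Sc q + 1$; together with the trivial bound $|B_q(t)| \leq \frac{1}{2\pi}\int_\R |\wh{B_q}(\xi)|\,d\xi =: c_0$ coming from $m = 0$ (recall $\wh{B_q} \in L^1(\R,\bH_\C)$ for $\Sc q > 1$), this yields a single master estimate $|B_q(t)| \leq \min\{c_0,\, c_2 |t|^{-2}\}$ valid on all of $\R$.

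For (i), I would split $\int_\R |B_q(t)|\,dt$ at $|t| = 1$: on the compact part $\{|t| \leq 1\}$ the uniform bound $c_0$ contributes at most $2c_0$, while on the tail $\{|t| > 1\}$ the estimate $c_2|t|^{-2}$ gives $\int_{|t|>1} c_2 |t|^{-2}\,dt = 2c_2 < \infty$. Adding the two bounds shows $B_q \in L^1(\R)$.

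For (ii), note that $|B_q(k)\,e^{ik\xi}| = |B_q(k)|$, so absolute convergence of the Fourier series --- indeed uniform convergence in $\xi$, by the Weierstrass $M$-test --- reduces to summability of $\{|B_q(k)|\}_{k\in\Z}$. Isolating the $k = 0$ term (bounded by $c_0$) and applying $|B_q(k)| \leq c_2 |k|^{-2}$ for $k \neq 0$ gives $\sum_{k\in\Z}|B_q(k)| \leq c_0 + 2c_2\sum_{k=1}^\infty k^{-2} < \infty$, which is the claim.

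The only genuinely delicate point, and the one I would take care over, is the passage from $D^m\wh{B_q} \in L^1$ to the pointwise decay of $B_q$: the estimate $|t^m B_q(t)| \leq \frac{1}{2\pi}\|D^m\wh{B_q}\|_{L^1}$ must be justified by Fourier inversion, which is legitimate precisely because both $\wh{B_q}$ and $D^m\wh{B_q}$ lie in $L^1(\R,\bH_\C)$; this also explains why the bound degenerates at $t = 0$ and must be supplemented there by the $m = 0$ estimate. Since all estimates are on the quaternionic modulus $|\cdot|$, the non-commutativity of $\bH_\C$ plays no role and the scalar argument transfers verbatim.
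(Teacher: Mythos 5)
Your proposal is correct and follows essentially the same route as the paper: the paper's proof of this proposition is precisely the preceding computation showing $\int_{\R}|D^m\wh{B_q}(\xi)|\,d\xi<\infty$ for $\Sc q>1$ and $m<\Sc q+1$, from which the decay bound $|B_q(t)|\leq c_m|t|^{-m}$ and then (i) and (ii) are read off. Your write-up merely makes explicit the details the paper leaves implicit --- the admissible choice $m=2$, the supplementary uniform bound from $\wh{B_q}\in L^1$, the split of the integral and the sum at $|t|=1$, and the Fourier-inversion justification of the pointwise decay --- all of which are consistent with the paper's argument.
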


The time domain representation of the quaternionic B-spline $B_q$ is given by
\be
B_q (t) =  \frac{1}{\Gamma (q)}\,\sum_{k=0}^\infty (-1)^k \binom{q}{k} (t - k)_+^{q-1}, \quad t\in \R,\;\;\Sc q > 1.\label{time domain}
\end{equation}
This equality holds in the sense of distributions and in $L^2 (\R)$. Here, the quaternionic binomial coefficient is defined via the quaternionic Gamma function:
\be\label{gamma}
\Gamma (q) := \int_0^\infty t^{a-1}\cos (|v|\log t) e^{-t} dt + \frac{v}{|v|}\,\int_0^\infty t^{a-1}\sin (|v|\log t) e^{-t} dt.
\ee 
It was shown in \cite{HMass} that $\Gamma (q)$ can be written as
\begin{align}\label{qGamma}
\Gamma (q) & = \frac{\Gamma (\sigma + i |v|) + \Gamma (\sigma - i|v|)}{2} + \frac{v}{|v|} \frac{(\Gamma (\sigma + i |v|) - \Gamma (\sigma - i|v|)}{2i}\nonumber\\
& = \Re \Gamma (z) + \frac{v}{|v|} \Im \Gamma(z),\quad z := \sigma + i |v|.
\end{align}
The next result shows that there exists also a structure formula for the quaternionic B-splines.
\begin{theorem}\label{thm1}
If $q=a+v\in{\mathbb H}_{\mathbb R}$ and $z=a+i|v|\in{\mathbb C}$, then
\[
B_q(t)=\Re B_z(t) + \frac{v}{|v|}\Im B_z(t),\qquad t\in \R\setminus\{0\}.
\]
\end{theorem}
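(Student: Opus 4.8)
The plan is to realize both sides of the identity inside a single commutative subalgebra of $\bH_\R$ and transport the entire computation to $\C$. Set $\mathbf n := v/|v|$, a unit pure quaternion with $\mathbf n^2 = -1$, and consider the real-linear span $\R[\mathbf n] := \{\alpha + \beta\mathbf n : \alpha,\beta\in\R\}$. Because $\mathbf n^2 = -1 = i^2$, the map $\Phi:\R[\mathbf n]\to\C$, $\Phi(\alpha+\beta\mathbf n):=\alpha+\beta i$, is an isometric isomorphism of real algebras; in particular it is $\R$-linear, multiplicative, continuous, and carries inverses to inverses and conjugates to conjugates. The point is that $\R[\mathbf n]$ is commutative and field-isomorphic to $\C$, so that the non-commutativity of $\bH_\R$ --- and the failure of the semigroup law for $z^q$ recorded after \eqref{diff/int formulae} --- never enters, provided every quantity appearing lies in $\R[\mathbf n]$.

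I would then check, ingredient by ingredient, that the time-domain series \eqref{time domain} for $B_q$ lies in $\R[\mathbf n]$ and is carried by $\Phi$ onto the series representing $B_z$. Since $q = a + |v|\mathbf n$, we have $\Phi(q-j) = (a-j) + i|v| = z-j$ for each integer $j\geq 0$, so applying the multiplicative map $\Phi$ to the finite product defining the quaternionic binomial coefficient gives $\Phi\big(\binom{q}{k}\big) = \binom{z}{k}$. For a real base $r = (t-k)_+ > 0$ the argument in \eqref{eq2.1} is real, hence
\[
r^{q-1} = r^{a-1}\big[\cos(|v|\log r) + \mathbf n\,\sin(|v|\log r)\big]\in\R[\mathbf n],\qquad \Phi(r^{q-1}) = r^{a-1}e^{i|v|\log r} = r^{z-1}.
\]
Finally, \eqref{qGamma} says precisely that $\Gamma(q) = \Re\Gamma(z) + \mathbf n\,\Im\Gamma(z)\in\R[\mathbf n]$, so $\Phi(\Gamma(q)) = \Gamma(z)$; as $\Gamma(z)\neq 0$ on the relevant half-plane, $\Gamma(q)$ is invertible in $\R[\mathbf n]$ and $\Phi(\Gamma(q)^{-1}) = \Gamma(z)^{-1}$. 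The real scalars $(-1)^k$ are fixed by $\Phi$.

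Assembling these, each summand of \eqref{time domain} lies in $\R[\mathbf n]$ and
\[
\Phi\!\Big(\tfrac{1}{\Gamma(q)}(-1)^k\tbinom{q}{k}(t-k)_+^{q-1}\Big) = \tfrac{1}{\Gamma(z)}(-1)^k\tbinom{z}{k}(t-k)_+^{z-1},
\]
the $k$-th term of the series representing $B_z$. Since $\Phi$ is continuous and \eqref{time domain} converges in $L^2(\R)$ (and distributionally), $\Phi$ commutes with the sum, giving $\Phi(B_q(t)) = B_z(t)$ for $t\in\R\setminus\{0\}$. Applying $\Phi^{-1}$ and writing $B_z(t) = \Re B_z(t) + i\,\Im B_z(t)$ with $\Re B_z(t),\Im B_z(t)\in\R$ yields
\[
B_q(t) = \Phi^{-1}(B_z(t)) = \Re B_z(t) + \mathbf n\,\Im B_z(t) = \Re B_z(t) + \tfrac{v}{|v|}\,\Im B_z(t),
\]
which is the claim. (If $v=0$ the statement reduces to the trivial identity $B_a=B_a$.)

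I expect the main obstacle to be largely bookkeeping: confirming that no step ever leaves $\R[\mathbf n]$. The one genuinely delicate point is the power $r^{q-1}$ --- one must use that the bases $(t-k)_+$ are nonnegative reals, so that $\log r$ is real and \eqref{eq2.1} produces only the directions $1$ and $\mathbf n$. Were the base complex, as it is for the Fourier-domain factor $\Xi(\xi)$, both the imaginary unit $i$ (from $\log\Xi(\xi)$) and the quaternionic direction $\mathbf n$ would appear, the value would lie in $\bH_\C$ rather than $\R[\mathbf n]$, and this clean correspondence would break. This is exactly why the identity is asserted in the time domain at real points $t$, where $B_z(t)\in\C$ and its decomposition into $\Re$ and $\Im$ is meaningful, with the single point $t=0$ excluded as in the hypothesis.
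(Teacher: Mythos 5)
Your proof is correct, and it reaches the paper's conclusion by a genuinely cleaner, more structural route. The paper works inside the very same two-dimensional slice $\R+\R\,(v/|v|)$ that you call $\R[\mathbf n]$, but it never names it: instead it verifies the needed multiplicativity by hand --- importing the Pochhammer decomposition \eqref{q Poch} from \cite{HMass}, expanding the product $\binom{q}{k}(t-k)_+^{q-1}$ into scalar and $v/|v|$ components in \eqref{qkt}, and computing $1/\Gamma(q)$ explicitly via \eqref{inv Gamma} before multiplying out once more. Each of those three computations is an instance of your single observation that $\Phi:\R[\mathbf n]\to\C$, $\alpha+\beta\mathbf n\mapsto\alpha+\beta i$, is an isomorphism of real algebras: multiplicativity of $\Phi$ yields \eqref{q Poch} and \eqref{qkt} for free (so you do not even need to cite \cite{HMass} for the Pochhammer identity), and preservation of inverses replaces \eqref{inv Gamma}. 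Your packaging also makes transparent \emph{why} the theorem is a time-domain statement: the bases $(t-k)_+$ are nonnegative reals, so \eqref{eq2.1} never introduces the second imaginary unit $i$ that enters in the frequency domain through $\log\Xi(\xi)$; the paper's computation uses this fact silently. What the paper's hands-on expansion buys in return is only self-containedness at the level of quaternion arithmetic, plus the intermediate identity \eqref{B_q complex}, which is mildly informative on its own. One small repair to your last step: convergence of \eqref{time domain} in $L^2$ and in distributions would only give the identity almost everywhere, whereas the theorem asserts it at every $t\in\R\setminus\{0\}$; but no limit argument is actually needed, since for fixed $t$ the terms with $k\geq t$ vanish, so the series is a finite sum and plain $\R$-linearity of $\Phi$ already gives $\Phi(B_q(t))=B_z(t)$ pointwise. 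With that one-line substitution, your argument is complete.
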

\begin{proof} With $q\in {\mathbb H}_{\mathbb R}$ and $z\in{\mathbb C}$ as in the statement of the Theorem, in \cite{HMass} the quaternionic Pochhammer symbol $(q)_j=q(q-1)\dots (q-j+1)$ was decomposed as \begin{equation}
(q)_j=\Re (z)_j+\frac{v}{|v|}\Im (z)_j.\label{q Poch}
\end{equation}
By (\ref{q Poch}) we have 
\[
\binom{q}{k}=\dfrac{(q)_k}{k!}=\dfrac{\Re (z)_k + \frac{v}{|v|}\Im (z)_k}{k!}=\Re\binom{z}{k}+\dfrac{v}{|v|}\Im\binom{z}{k}.
\]
Furthermore, 
\begin{align*}
t^{q-1}&=t^{a-1}[\cos (|v|\log t)+\dfrac{v}{|v|}\sin (|v|\log t)]\\
&=t^{a-1}[\Re t^{i|v|} + \dfrac{v}{|v|}\Im t^{i|v|}]=\Re t^{z-1}+\dfrac{v}{|v|}\Im t^{z-1}.
\end{align*}
Thus,
\begin{align}
\binom{q}{k}(t-k)_+^{q-1}&=\bigg[\Re\binom{z}{k}+\dfrac{v}{|v|}\Im\binom{z}{k}\bigg]\bigg[\Re ((t-k)_+^{z-1})+\frac{v}{|v|}\Im ((t-k)_+^{z-1})\bigg]\notag\\
&=\Re\binom{z}{k}\Re (t-k)_+^{z-1} - \Im\binom{z}{k}\Im (t-k)_+^{z-1}\notag\\
&+\frac{v}{|v|}\bigg[\Re\binom{z}{k}\Im (t-k)_+^{z-1} + \Im \binom{z}{k}\Re (t-k)_+^{z-1}\bigg]\notag\\
&=\Re \bigg[\binom{z}{k}(t-k)_+^{z-1}\bigg]+\frac{v}{|v|}\Im\bigg[\binom{z}{k}(t-k)_+^{z-1}\bigg].\label{qkt}
\end{align} 
Substituting (\ref{qkt}) into (\ref{time domain}) yields
\begin{align}
B_q(t)
&=\frac{1}{\Gamma (q)}\bigg[\Re\bigg(\sum_{k=0}^{\infty}\binom{z}{k}(t-k)_+^{z-1}\bigg)+\frac{v}{|v|}\Im\bigg(\sum_{k=0}^{\infty}\binom{z}{k}(t-k)_+^{z-1}\bigg)\bigg]\notag\\
&=\frac{1}{\Gamma (q)}[\Re (\Gamma (z)B_z(t))+\frac{v}{|v|}\Im (\Gamma (z) B_z(t))].\label{B_q complex}
\end{align}
On the other hand, using \eqref{qGamma}, we obtain
\begin{equation}
\frac{1}{\Gamma (q)}=\frac{\Re \Gamma (z) - \frac{v}{|v|}\Im \Gamma (z)}{|\Gamma (z)|^2},\label{inv Gamma}
\end{equation}
and substituting (\ref{inv Gamma}) into (\ref{B_q complex}) gives
\begin{align*}
B_q(t)&=\bigg[\frac{\Re \Gamma (z) - \frac{v}{|v|}\Im \Gamma (z)}{|\Gamma (z)|^2}\bigg][\Re (\Gamma (z)B_z(t))+\frac{v}{|v|}\Im (\Gamma (z) B_z(t))]\\
&=\frac{1}{|\Gamma (z)|^2}\bigg[[\Re \Gamma (z)\Re (\Gamma (z)B_z(t))+\Im \Gamma (z)\Im (\Gamma (z)B_z(t))]\\
&\qquad\qquad +\frac{v}{|v|}[\Re \Gamma (z)\Im (\Gamma (z)B_z(t))-\Im \Gamma (z)\Re (\Gamma (z)B_z(t))]\bigg]\\
&=\frac{1}{|\Gamma (z)|^2}\bigg[\Re (|\Gamma (z)|^2B_z(t))+\frac{v}{|v|}\Im (|\Gamma (z)|^2B_z(t))\bigg]\\
&=\Re B_z(t) + \frac{v}{|v|}\Im B_z(t).\qedhere
\end{align*}
\end{proof}
\section{Quaternion Inverses}
Let $q=z+\sum\limits_{i=1}^3w_ie_i\in{\mathbb H}_{\mathbb C}$ (i.e., $z,w_1,w_2,w_3\in{\mathbb C}$). Then with $\tilde q=z-w$ we have $q\tilde q=z^2+\sum\limits_{i=1}^3w_i^2$, so that $q$ is invertible if and only if $z^2+\sum\limits_{i=1}^3w_i^2\neq 0$ and in this case $q^{-1}=\tilde q/(z^2+\sum\limits_{i=1}^3w_i^2)$.
When $q=a+v\in {\mathbb H}_{\mathbb R}$, $q$ is invertible provided $q\neq 0$ and then $q^{-1}=\overline{q}/|q|^2$.

\begin{lemma} If $\lambda\in{\mathbb C}$ and $q\in{\mathbb H}_{\mathbb R}$ then $e^{\lambda q}$ is invertible in ${\mathbb H}_{\mathbb C}$ and
$$(e^{\lambda q})^{-1}=e^{-\lambda q}.$$
\end{lemma}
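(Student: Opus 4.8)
The plan is to exploit the fact that, although $\mathbb{H}_{\mathbb C}$ is noncommutative and possesses zero divisors (so that the semigroup identity $z^{q_1}z^{q_2}=z^{q_1+q_2}$ fails in general), the element $\lambda q$ commutes with itself, and hence all computations involving $e^{\lambda q}$ and $e^{-\lambda q}$ take place inside a commutative $\mathbb{C}$-subalgebra of $\mathbb{H}_{\mathbb C}$. First I would dispose of the trivial case $v=0$: then $q=a\in\mathbb{R}$, $\lambda q=\lambda a\in\mathbb{C}$, and $e^{\lambda a}$ is a nonzero complex number with inverse $e^{-\lambda a}$. So assume $v\neq 0$ and set $u:=v/|v|$, a real pure unit quaternion with $u^2=-1$. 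The crucial structural observation is that the complex unit $i$ coming from the complexification commutes with $e_1,e_2,e_3$, so the $\mathbb{C}$-linear span $\mathbb{C}\oplus\mathbb{C}u=\{\alpha+\beta u:\alpha,\beta\in\mathbb{C}\}$ is a commutative subalgebra of $\mathbb{H}_{\mathbb C}$ (isomorphic to $\mathbb{C}[x]/(x^2+1)$), and it contains every power of $\lambda q=\lambda a+\lambda|v|\,u$.

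Within this commutative subalgebra I would compute $e^{\lambda q}$ directly from the defining series. Writing $\lambda q=\mu+\nu u$ with $\mu:=\lambda a$ and $\nu:=\lambda|v|$ in $\mathbb{C}$, the centrality of $\mu$ gives $e^{\lambda q}=e^{\mu}e^{\nu u}$, and splitting $\sum_j (\nu u)^j/j!$ into even and odd powers and using $u^2=-1$ yields
\[
e^{\lambda q}=e^{\lambda a}\bigl(\cos(\lambda|v|)+u\,\sin(\lambda|v|)\bigr),
\]
with $\cos$ and $\sin$ the entire complex trigonometric functions; this is the natural extension of formula \eqref{eq2.1}. Replacing $\lambda$ by $-\lambda$ and using that $\cos$ is even and $\sin$ is odd gives $e^{-\lambda q}=e^{-\lambda a}\bigl(\cos(\lambda|v|)-u\,\sin(\lambda|v|)\bigr)$. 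Multiplying the two expressions inside the commutative subalgebra, the scalar factors cancel and the bracketed factors multiply as $(\cos\nu+u\sin\nu)(\cos\nu-u\sin\nu)=\cos^2\nu-u^2\sin^2\nu=\cos^2\nu+\sin^2\nu=1$, where the complex Pythagorean identity is used. The same computation with the factors in the opposite order gives $1$ as well, so $e^{\lambda q}e^{-\lambda q}=e^{-\lambda q}e^{\lambda q}=1$, which is the claim.

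The points requiring care are the analytic justifications rather than the algebra. I would first invoke Lemma \ref{exp bounds}(2) to guarantee that $e^{\lambda q}$ is a well-defined element of $\mathbb{H}_{\mathbb C}$; more precisely, since $\mathbb{H}_{\mathbb C}$ is finite dimensional its norm is sub-multiplicative up to a constant, so $\sum_j|(\lambda q)^j|/j!<\infty$ and the series converges absolutely. Absolute convergence is exactly what licenses both the rearrangement into even and odd parts and the termwise multiplication of the two series (equivalently, their Cauchy product). I expect the main obstacle to be purely presentational: one must emphasize why the general failure of the semigroup law in $\mathbb{H}_{\mathbb C}$ does not obstruct the argument here. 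The answer is that the obstruction to $e^{p}e^{p'}=e^{p+p'}$ is precisely noncommutativity of $p$ and $p'$, whereas $\lambda q$ and $-\lambda q$ commute; confining the entire computation to the commutative subalgebra $\mathbb{C}\oplus\mathbb{C}u$ makes this explicit and simultaneously rules out the zero-divisor pathology, since an element $\cos\nu+u\sin\nu$ of that subalgebra is inverted exactly by $\cos\nu-u\sin\nu$.
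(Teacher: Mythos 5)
Your proof is correct and follows essentially the same route as the paper's: both reduce $e^{\lambda q}$ to $e^{\lambda a}\bigl(\cos(\lambda|v|)+\frac{v}{|v|}\sin(\lambda|v|)\bigr)$ and then verify $e^{\lambda q}e^{-\lambda q}=1$ via $\left(\frac{v}{|v|}\right)^2=-1$ and the complex Pythagorean identity. The extra care you take (the $v=0$ case, absolute convergence, and confining the computation to the commutative subalgebra $\mathbb{C}\oplus\mathbb{C}\,\frac{v}{|v|}$) is exactly the justification the paper compresses into the phrase ``by direct calculation.''
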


\begin{proof} By direct calculation we find that 
$$e^{\lambda q}=e^{\lambda a}e^{\lambda v}=e^{\lambda a}\bigg(\cos (\lambda |v|)+\frac{v}{|v|}\sin (\lambda |v|)\bigg)$$
and therefore,
\begin{align*}
e^{\lambda q}e^{-\lambda q}&=e^{\lambda a}\bigg(\cos (\lambda |v|)+\frac{v}{|v|}\sin (\lambda |v|)\bigg)e^{-\lambda a}\bigg(\cos (\lambda |v|)-\frac{v}{|v|}\sin (\lambda |v|)\bigg)\\
&=\cos^2(\lambda |v|)+\sin^2(\lambda |v|)=1.\qedhere
\end{align*}
\end{proof}

\begin{lemma}\label{lem3} 
For all $t\in {\mathbb R}^+$ and $q\in {\mathbb H}_{\mathbb R}$, we have
$$(-t)^q=e^{i\pi q}t^q.$$
\end{lemma}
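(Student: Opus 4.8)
The plan is to express both $(-t)^q$ and $t^q$ as values of the quaternionic exponential and then reduce the claim to the additivity $e^{X+Y}=e^X e^Y$ for a pair of \emph{commuting} elements of ${\mathbb H}_{\mathbb C}$.

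The first step is to observe that the definition \eqref{eq2.1} of $z^q$ is nothing but $z^q = e^{(\log z)\,q}$, where $\log$ denotes the principal branch. Indeed, by the exponential formula established in the preceding lemma, for any $\lambda\in{\mathbb C}$ and $q=a+v\in{\mathbb H}_{\mathbb R}$ one has $e^{\lambda q}=e^{\lambda a}\big(\cos(\lambda|v|)+\tfrac{v}{|v|}\sin(\lambda|v|)\big)$; taking $\lambda=\log z$ and using $e^{a\log z}=z^a$ reproduces \eqref{eq2.1} verbatim. In particular, since $t>0$ gives $\log t\in{\mathbb R}$ while $\log(-t)=\log t + i\pi$ for the principal branch, we obtain $t^q=e^{(\log t)q}$ and $(-t)^q = e^{(\log t + i\pi)q}$.

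The second step splits the exponent of $(-t)^q$ as $(\log t + i\pi)q = (\log t)\,q + i\pi\, q$. Both summands are complex-scalar multiples of the single element $q$, so they commute: $(\log t\, q)(i\pi\, q) = i\pi\log t\, q^2 = (i\pi\, q)(\log t\, q)$. For commuting $X,Y\in{\mathbb H}_{\mathbb C}$ the usual Cauchy-product argument (valid because the exponential series converges absolutely in the finite-dimensional normed algebra ${\mathbb H}_{\mathbb C}$) yields $e^{X+Y}=e^X e^Y$. Applying this with $X=i\pi q$ and $Y=(\log t)q$ gives $(-t)^q = e^{i\pi q}\,e^{(\log t)q}=e^{i\pi q}\,t^q$, which is the assertion.

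The only genuine subtlety lies in the bookkeeping rather than the algebra: one must (a) fix the branch so that $\log(-t)=\log t + i\pi$, which is exactly the choice that produces the factor $e^{+i\pi q}$ rather than $e^{-i\pi q}$ and is consistent with the principal-branch convention $-\pi<\arg\leq\pi$ used to define $\widehat{B_q}$; and (b) confirm that the scalar-exponential formula $e^{\lambda q}$ remains valid for \emph{complex} $\lambda$ (here $\lambda=\log(-t)$ is non-real), which is immediate from the power-series derivation using $v^2=-|v|^2$. Should one prefer to avoid invoking $e^{X+Y}=e^X e^Y$ altogether, the same result follows by multiplying out $e^{i\pi q}t^q$ directly with the explicit $\cos/\sin$ representations and collapsing the products via the angle-addition identities, since $i\pi|v| + |v|\log t = |v|\log(-t)$.
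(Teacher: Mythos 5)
Your proof is correct, and it takes a genuinely different route from the one in the paper. The paper argues by direct computation: it expands $(-t)^q$ from the definition \eqref{eq2.1}, substitutes $\log(-t)=\log t+i\pi$, converts $\cos(|v|\log t+i\pi|v|)$ and $\sin(|v|\log t+i\pi|v|)$ into $\cosh(\pi|v|)$ and $\sinh(\pi|v|)$ terms via the complex angle-addition formulas, and regroups the four resulting products until the prefactor $e^{i\pi a}e^{i\pi v}$ emerges in front of $t^q$ --- essentially the fallback computation you sketch in your final sentence, run in the reverse direction. Your primary argument instead rests on the structural identity $z^q=e^{(\log z)q}$ (principal branch), which ties the definition \eqref{eq2.1} to the quaternionic exponential of the preceding lemma, and then reduces the claim to $e^{X+Y}=e^X e^Y$ for the commuting elements $X=i\pi q$ and $Y=(\log t)\,q$. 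This is cleaner and more transparent: the factor $e^{i\pi q}$ is exposed as nothing but the exponential of the branch shift $i\pi$ multiplied by $q$, and the argument makes clear why the failure of the general semigroup law $z^{q_1}z^{q_2}=z^{q_1+q_2}$ noted in Section 2 is irrelevant here --- your two exponents are complex scalar multiples of one and the same quaternion, hence commute, so the Cauchy-product rearrangement is legitimate in the finite-dimensional algebra $\bH_\C$. You also correctly pin down the two points of bookkeeping: the branch convention $\arg(-t)=+\pi$, which produces $e^{+i\pi q}$ and matches the convention $t^a=e^{i\pi a}|t|^a$ for $t<0$ invoked at the end of the paper's proof, and the validity of $e^{\lambda q}=e^{\lambda a}\bigl(\cos(\lambda|v|)+\frac{v}{|v|}\sin(\lambda|v|)\bigr)$ for complex $\lambda$, which is exactly what the paper's preceding lemma establishes for arbitrary $\lambda\in\C$. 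What the paper's approach buys is self-containedness, using only the definition and trigonometric identities; what yours buys is brevity, a reusable identity $z^q=e^{(\log z)q}$, and an argument that generalizes verbatim to any shift of $\log z$ by a complex constant.
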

\begin{proof} From the definition of a quaternionic power, we have for $t > 0$,
\begin{align*}
(-t)^q&=(-t)^a\bigg[\cos (|v|\log (-t))+\frac{v}{|v|}\sin (|v|\log (-t))\bigg]\\
&=(-t)^a\bigg[\cos (|v|(\log t +i\pi ))+\frac{v}{|v|}\sin (|v|(\log t +i\pi ))\bigg]\\
&=(-t)^a\bigg[\cos (|v|\log t)\cosh (\pi |v|)-i\sin (|v|\log t)\sinh (\pi |v|)\\
&\qquad\qquad\qquad\qquad +\frac{v}{|v|}(\sin (|v|\log t)\cosh (\pi |v|)+i\cos (|v|\log t)\sinh (\pi |v|))\bigg]\\
&=(-t)^a\bigg(\cos (|v|\log t)\bigg[\cosh (\pi |v|)+\frac{iv}{|v|}\sinh (\pi |v|)\bigg]\\
&\qquad\qquad\qquad\qquad +\sin (|v|\log t)\bigg[-i\sinh (\pi |v|)+\frac{v}{|v|}\cosh (\pi |v|)\bigg]\bigg)\\
&=e^{i\pi a}e^{i\pi v}t^a\bigg(\cos (|v|\log t)+\frac{v}{|v|}\sin (|v|\log t)\bigg)=e^{i\pi q}t^{a+v}=e^{i\pi q}t^q.
\end{align*}
Here, we used the usual convention that $t^a := e^{i\pi a}|t|^a$, for $t<0$.
\end{proof}
\section{A Quaternionic Zeta Function}
In this section we study  the quaternionic analog $\zeta (q,a)$ of the classical Hurwitz zeta function $\zeta (s,a)$, which is defined by
\[
\zeta (s, a) := \sum_{k=0}^\infty \frac{1}{(a+n)^s}, \quad \re s > 1\text{ and } \re a > 0.
\]
To do so, we first define functions $\chi_+$ and $\chi_-$ by
$$\chi_\pm (v) :=\frac{1}{2}\bigg(1\pm i\frac{v}{|v|}\bigg),\qquad \bigg(v=\sum_{i=1}^3v_ie_i\in{\mathbb H}_{\mathbb R}\bigg)$$
and note that 
\begin{equation}
\chi_\pm (v)^2=\chi_\pm (v);\quad \chi_+(v)\chi_-(v)=\chi_-(v)\chi_+(v)=0.\label{chi props}
\end{equation}

Given $\sigma >1$ and $q=\sigma+v\in{\mathbb H}_{\mathbb R}$, we define the quaternionic zeta function $\zeta (q,\mydot) :{\mathbb R}\to {\mathbb H}_{\mathbb R}$ by
$$
\zeta (q, a) :=\sum\limits_{k=0}^\infty\frac{1}{(a+k)^q},\qquad a > 0.
$$
\begin{lemma} Let $q=a+v\in{\mathbb H}_{\mathbb R}$. Then with $w=\sigma+i|v|\in{\mathbb C}$, we have
$$
\zeta (q, a) =\chi_+(v)\zeta (w,a)+\chi_-(v)\zeta({\overline w},a),
$$
where $\zeta (w,a)$ and $\zeta({\overline w},a)$ denote the complex-valued Hurwitz zeta functions. 
Further, for all positive integers $n$,
$$\zeta (q,a)^n=\chi_+(v)\zeta(w,a)^n+\chi_-(v)\zeta({\overline w},a)^n.$$
\end{lemma}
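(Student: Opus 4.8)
The plan is to reduce the whole statement to a single-summand computation and then sum, exploiting that the second argument makes the base a positive real. Fix $a>0$ and $k\ge 0$, and write $\sigma:=\Sc q$. Since $a+k>0$, the quaternionic power is given directly by \eqref{eq2.1}:
\[
(a+k)^{-q}=(a+k)^{-\sigma}\Big[\cos\big(|v|\log(a+k)\big)-\tfrac{v}{|v|}\sin\big(|v|\log(a+k)\big)\Big].
\]
Writing $u:=\tfrac{v}{|v|}$, $C:=\cos(|v|\log(a+k))$ and $S:=\sin(|v|\log(a+k))$, one checks that $\tfrac{1}{(a+k)^q}$ equals this expression, because $(C+uS)(C-uS)=C^2+S^2=1$ forces $((a+k)^q)^{-1}=(a+k)^{-q}$, so the two readings of the summand agree. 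First I would record the purely algebraic fact underlying the decomposition: the element $iu\in\mathbb{H}_{\mathbb C}$ satisfies $(iu)^2=+1$, and $\chi_\pm(v)=\tfrac12(1\pm iu)$ are its spectral projections, hence orthogonal idempotents summing to $1$ by \eqref{chi props}.

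The first identity then follows by verifying the pointwise decomposition of the summand. The complex powers are $(a+k)^{-w}=(a+k)^{-\sigma}(C-iS)$ and $(a+k)^{-\overline w}=(a+k)^{-\sigma}(C+iS)$, and a short expansion rewrites $(a+k)^{-q}=(a+k)^{-\sigma}(C-uS)$ as the combination of $\chi_+(v)(a+k)^{-w}$ and $\chi_-(v)(a+k)^{-\overline w}$ appearing in the statement. The one place that needs care is the pairing of $\chi_+$ and $\chi_-$ with $w$ and $\overline w$; this is pinned down by the two scalar relations $\chi_+(v)+\chi_-(v)=1$ and the matching of the coefficient of $u$, and it must be read off consistently with the structure normalisation $\Re(\cdot)+\tfrac{v}{|v|}\Im(\cdot)$ already used in Theorem~\ref{thm1}. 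Because $\chi_\pm(v)$ do not depend on $k$, they factor out of the series; convergence is immediate since $\Re w=\Re\overline w=\sigma>1$ makes both complex Hurwitz zeta series absolutely convergent, and the fixed bounded factors $\chi_\pm(v)$ make the quaternionic series absolutely convergent as well, legitimising the interchange of $\chi_\pm(v)$ with $\sum_k$.

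For the power identity I would argue purely from \eqref{chi props}. The key point is that $A:=\zeta(w,a)$ and $B:=\zeta(\overline w,a)$ are \emph{complex} numbers, hence central in $\mathbb{H}_{\mathbb C}$, so they commute with $\chi_\pm(v)$. Expanding $(\chi_+(v)A+\chi_-(v)B)^n$ and moving every scalar to the right, each term becomes a product $\chi_{s_1}(v)\cdots\chi_{s_n}(v)$ of $n$ idempotents times a complex scalar; by the orthogonality $\chi_+(v)\chi_-(v)=\chi_-(v)\chi_+(v)=0$ of \eqref{chi props} such a product vanishes unless all $s_i$ coincide, and then $\chi_\pm(v)^n=\chi_\pm(v)$. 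Only the all-$+$ and all-$-$ words survive, yielding $\chi_+(v)A^n+\chi_-(v)B^n$, which is exactly the asserted formula for $\zeta(q,a)^n$.

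The main obstacle is the first step: correctly tracking the non-commutative bookkeeping in the pointwise identity, and in particular fixing the exact pairing of the idempotents $\chi_\pm(v)$ with the arguments $w$ and $\overline w$, since the quaternionic power does not obey the semigroup law and the decomposition has to be matched against the $\Re+\tfrac{v}{|v|}\Im$ convention of Theorem~\ref{thm1}. Once that single-summand identity is secured, both the summation and the power formula are routine consequences of the idempotent calculus \eqref{chi props} together with the centrality of the complex Hurwitz zeta values.
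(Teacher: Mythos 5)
Your overall route is the same as the paper's: decompose each summand $(a+k)^{-q}$ by means of the idempotents $\chi_\pm(v)$, factor them out of the series, and get the power identity from the orthogonal-idempotent calculus together with the centrality of the complex scalars $\zeta(w,a)$, $\zeta(\overline w,a)$ (your last argument is exactly the paper's binomial expansion). The genuine gap is the step you yourself single out as the crux, and it fails: the ``short expansion'' does not produce the pairing you claim. With $u=v/|v|$, $C=\cos(|v|\log(a+k))$, $S=\sin(|v|\log(a+k))$ and $\sigma=\Sc q$, one finds
\[
\chi_+(v)\,(a+k)^{-w}+\chi_-(v)\,(a+k)^{-\overline w}
=(a+k)^{-\sigma}\Bigl[\tfrac12(1+iu)(C-iS)+\tfrac12(1-iu)(C+iS)\Bigr]
=(a+k)^{-\sigma}(C+uS),
\]
which is $(a+k)^{-\overline q}$, whereas $(a+k)^{-q}=(a+k)^{-\sigma}(C-uS)$. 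The correct single-summand identity has the idempotents interchanged,
\[
(a+k)^{-q}=\chi_-(v)\,(a+k)^{-w}+\chi_+(v)\,(a+k)^{-\overline w},
\]
and consequently
\[
\zeta(q,a)=\chi_-(v)\,\zeta(w,a)+\chi_+(v)\,\zeta(\overline w,a),\qquad
\zeta(q,a)^n=\chi_-(v)\,\zeta(w,a)^n+\chi_+(v)\,\zeta(\overline w,a)^n.
\]
Moreover, the consistency check you invoke points the same way: for any complex number $\lambda$ one has $\Re\lambda+u\,\Im\lambda=\chi_-(v)\,\lambda+\chi_+(v)\,\overline\lambda$, so the normalisation $\Re(\cdot)+\frac{v}{|v|}\Im(\cdot)$ of Theorem~\ref{thm1} forces $\chi_-$ onto $\zeta(w,a)$, not $\chi_+$.

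Two remarks put this in context. First, the paper's own proof derives precisely the swapped version displayed above; the printed statement of the lemma (with $\chi_+$ multiplying $\zeta(w,a)$) disagrees with the paper's own computation and is false as written whenever $v\neq 0$ and $\Im\zeta(w,a)\neq 0$ --- statement and proof differ by the interchange $\chi_+\leftrightarrow\chi_-$, and it is the proof's version that is correct. Second, your error is therefore not a harmless notational choice: you assert a false identity and present it as ``pinned down'' by checks (matching the coefficient of $u$, consistency with Theorem~\ref{thm1}) that in fact determine the opposite pairing. Had you carried out the expansion you omitted, you would have found the sign discrepancy; with the pairing corrected, your argument becomes a valid proof of the corrected statement and is then essentially identical to the paper's.
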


\begin{proof} 
Note that \eqref{eq2.1} and the assumption $\Sc q = \sigma > 1$ imply that the infinite series $\sum\limits_{k=0}^\infty\dfrac{1}{(a+k)^q}$, $a > 0$, converges absolutely. For each $z\in{\mathbb C}^*$ we have 
\begin{align*}
z^q&=z^a\bigg[\cos (|v|\log z)+\frac{v}{|v|}\sin (|v|\log z)\bigg]\\
&=z^a\bigg[\frac{e^{i|v|\log z}+e^{-i|v|\log z}}{2}+\frac{v}{|v|}\bigg(\frac{e^{i|v|\log z}-e^{-i|v|\log z}}{2i}\bigg)\bigg]\\
&=z^a[\chi_-(v)z^{i|v|}+\chi_+(v)z^{-i|v|}]=\chi_-(v)z^w+\chi_+(v)z^{\overline{w}}
\end{align*}
where $w=a+i|v|\in{\mathbb C}$. Thus,
\begin{align*}
\zeta(q,a)&=\sum\limits_{k=0}^\infty (a+k)^{-q}\\
&=\sum\limits_{k=0}^\infty\bigg[\frac{\chi_-(v)}{(a+k)^w}+\frac{\chi_+(v)}{(a+k)^{\overline{w}}}\bigg]=\chi_-(v)\zeta(w,a)+\chi_+(v)\zeta({\overline{w}},a).
\end{align*}
Hence, because of (\ref{chi props}), we have
\begin{align*}
\zeta(q,a)^n&=\sum\limits_{j=0}^n\binom{n}{j}\chi_-(v)^j\zeta(w,a)^j\chi_+(v)^{n-j}\zeta({\overline{w}},a)^{n-j}\\
&=\chi_-(v)\zeta(w,a)^n+\chi_+(v)\zeta({\overline w},a)^n.\qedhere
\end{align*}
\end{proof}
\section{Interpolation with Quaternionic B-Splines}
In order to solve the cardinal spline interpolation problem using the classical Curry-Schoenberg splines \cite{chui,schoenberg}, one constructs a fundamental cardinal spline function which is a linear bi-infinite combination of polynomial B-splines $B_n$ of fixed order $n\in \N$ that interpolates the data set $\{\delta_{m,0} : m\in \Z\}$. More precisely, one solves the bi-infinite system
\[
\sum_{k\in \Z} c_k^{(n)} B_n \left(\frac{n}{2} + m - k\right) = \delta_{m,0},\quad m\in \Z,
\]
for the sequence $\{c_k^{(n)} : k\in \Z\}$ and then defines the fundamental cardinal spline $L_n:\R\to\R$ of order $n\in \N$ by
$L_n(x)=\sum\limits_{k\in \Z} c_k^{(n)} B_n \left(\frac{n}{2} + x - k\right)$.
The Fourier transform of  $L_n$ is given  by
\be\label{fundspline}
\wh{L}_n (\xi) = \frac{\wh{\tau_{-n/2}{B}_n}(\xi)}{\displaystyle{\sum_{k\in \Z}}\,\wh{\tau_{-n/2}{B}_n}(\xi + 2\pi k)}
\ee
where $\tau_\alpha f(x)=f(x-\alpha)$ $(x,\alpha \in{\mathbb R})$. Using the Euler-Frobenius polynomials associated with the B-splines $B_n$, one can show that the denominator in (\ref{fundspline}) does not vanish on the unit circle. For details, see for instance \cite{chui, schoenberg}.

One of our goals is to construct a {\em fundamental cardinal spline $L_q:\R\to \bH_\R$  of quaternionic order $q = a + v$}, $a > 1$, of the form
\be\label{compint2}
L_q := \sum_{k\in \Z} c_k^{(q)} B_q \left(\mydot - k\right),\quad a > 1,
\ee
satisfying the interpolation problem
\be\label{complexint2}
L_q (m) = \delta_{m,0}, \quad m\in \Z, 
\ee
for an appropriate bi-infinite quaternion-valued sequence $\{c_k^{(q)} : k\in \Z\}$ and for appropriate $q$ belonging to some nonempty open subset of $\bH_\R$. The case $q\in \C$ reduces to the setting for complex B-splines and the existence of fundamental cardinal splines for complex B-splines was investigated and proven in \cite{FM,FGMS}.

Taking the Fourier transform of \eqref{compint2} and applying \eqref{complexint2} to eliminate the expression containing the unknowns $\{c_k^{(q)} : k\in \Z\}$ gives (formally) a formula for $L_q$ similar to (\ref{fundspline}):
\be\label{compfundspline}
\wh{L}_q (\xi) = \frac{\wh{B}_q (\xi)}{{\sum\limits_{k\in \Z}}\,B_q (k)\,z^k},\quad z = e^{i\xi}, \;\;\xi\in \R.
\ee
Equation \eqref{compfundspline} contains a complex quaternion in the denominator. The next result gives necessary and sufficient conditions for the complex quaternion $\sum\limits_{k\in \Z}\,B_q (k)\,z^k$ to have an inverse. In the following, we write $\sum\limits_k$ to mean $\sum\limits_{k\in \Z}$.

\begin{theorem} Let $q=a+v\in{\mathbb H}_{\mathbb R}$ and $w=a+i|v|\in{\mathbb C}$. Further, let $|z| = 1$. Then $\sum\limits_\ell B_q(\ell )z^\ell$ is invertible in ${\mathbb H}_{\mathbb C}$ if and only if $\left(\sum\limits_\ell B_w(\ell )z^\ell \right)\left(\sum\limits_k B_{\overline w}(k)z^k\right)\neq 0$. In this case we have
\be\label{qinverse}
\bigg(\sum\limits_\ell B_q(\ell )z^\ell\bigg)^{-1}=\frac{\sum\limits_\ell\left[\Re B_w (\ell ) - \frac{v}{|v|}\Im B_w(\ell )\right]z^\ell}{\left(\sum\limits_\ell B_w(\ell )z^\ell \right)\left(\sum\limits_\ell{B_{\overline w}(\ell )}z^\ell\right)}.
\ee
\end{theorem}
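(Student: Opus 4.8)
The plan is to reduce the quaternionic invertibility question to a scalar one by means of the structure formula of Theorem~\ref{thm1}, and then to match the resulting scalar condition with the product appearing in the statement. First I would record that the three series $\sum_\ell B_q(\ell)z^\ell$, $\sum_\ell B_w(\ell)z^\ell$ and $\sum_\ell B_{\overline w}(\ell)z^\ell$ converge absolutely for $|z|=1$, since $\Sc q=\Sc w=\Sc\overline w=a>1$ and the absolute convergence of the Fourier series $\sum_k B_q(k)e^{ik\xi}$ for $\Sc q>1$ was established above. Moreover $\supp B_q\subseteq[0,\infty)$ and $B_q(0)=0$ (the exponent $q-1$ has positive scalar part, so the $k=0$ summand $(t-k)_+^{q-1}$ vanishes at $t=0$), and likewise $B_w(0)=0$; hence only indices $\ell\ge 1$ contribute and the exclusion of $t=0$ in Theorem~\ref{thm1} is harmless.

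Next I would apply Theorem~\ref{thm1} term by term and, using absolute convergence to rearrange freely, write
\[
S_q := \sum_\ell B_q(\ell)z^\ell = P + \frac{v}{|v|}\,Q,\qquad P:=\sum_\ell \Re B_w(\ell)\,z^\ell,\quad Q:=\sum_\ell \Im B_w(\ell)\,z^\ell,
\]
where $P$ and $Q$ are \emph{complex} numbers: the quantities $\Re B_w(\ell)$ and $\Im B_w(\ell)$ are real, and $z^\ell$ is a common complex factor. Thus $S_q$ is a complex quaternion whose complex scalar part is $P$ and whose vector part is $Q\,v/|v|$. Writing $v/|v|=\sum_i u_i e_i$ with $u_i\in\R$ and $\sum_i u_i^2=1$, the vector components are $w_i=Qu_i$, so $\sum_i w_i^2=Q^2$. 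The criterion for invertibility in ${\mathbb H}_{\mathbb C}$ recorded at the start of Section~4 then shows that $S_q$ is invertible if and only if $P^2+Q^2\ne 0$, and in that case
\[
S_q^{-1} = \frac{P - \frac{v}{|v|}Q}{P^2+Q^2} = \frac{\sum_\ell\left[\Re B_w(\ell)-\frac{v}{|v|}\Im B_w(\ell)\right]z^\ell}{P^2+Q^2}.
\]

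It remains to identify the denominator $P^2+Q^2$ with the product in the statement. From the time-domain representation $B_w(t)=\frac{1}{\Gamma(w)}\sum_k \binom{w}{k}(t-k)_+^{w-1}$ and the relations $\overline{\Gamma(w)}=\Gamma(\overline w)$, $\overline{\binom{w}{k}}=\binom{\overline w}{k}$ and $\overline{(t-k)_+^{w-1}}=(t-k)_+^{\overline w-1}$ (valid for real $t$, since $(t-k)_+$ is a nonnegative base), one gets the conjugation identity $B_{\overline w}(\ell)=\overline{B_w(\ell)}$, so that $\Re B_w(\ell)\pm i\,\Im B_w(\ell)$ are precisely the values $B_w(\ell)$ and $B_{\overline w}(\ell)$. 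Consequently
\[
\sum_\ell B_w(\ell)z^\ell = P + iQ,\qquad \sum_\ell B_{\overline w}(\ell)z^\ell = P - iQ,
\]
and their product equals $(P+iQ)(P-iQ)=P^2+Q^2$. Substituting this into the nonvanishing condition $P^2+Q^2\ne0$ and into the inverse formula yields exactly the stated equivalence and formula \eqref{qinverse}.

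The argument is essentially bookkeeping, and the only step demanding genuine care is this final identification: recognizing $P^2+Q^2$ as the product of the two complex symbols $\sum_\ell B_w(\ell)z^\ell$ and $\sum_\ell B_{\overline w}(\ell)z^\ell$. This rests on the conjugation identity $B_{\overline w}(\ell)=\overline{B_w(\ell)}$ and on keeping the real/imaginary split of the complex number $B_w(\ell)$ cleanly separate from the complex factor $z^\ell$; it is also where absolute convergence is invoked to justify interchanging summation with the real/imaginary decomposition.
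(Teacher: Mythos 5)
Your proof is correct and follows essentially the same route as the paper's: decompose the symbol $\sum_\ell B_q(\ell)z^\ell$ via the structure formula of Theorem~\ref{thm1} into a complex scalar part and a vector part along $v/|v|$, invoke the invertibility criterion for complex quaternions stated at the start of Section~4, and identify the resulting denominator with $\left(\sum_\ell B_w(\ell)z^\ell\right)\left(\sum_\ell B_{\overline w}(\ell)z^\ell\right)$ through the conjugation identity $B_{\overline w}(\ell)=\overline{B_w(\ell)}$, proved exactly as in the paper. The only differences are cosmetic — the paper expresses the scalar and vector parts in terms of $A=\sum_\ell B_w(\ell)z^\ell$ and $B=\sum_\ell \overline{B_w(\ell)}z^\ell$ from the outset and computes $Z^2+\sum_i V_i^2=AB$, whereas you work with $P$ and $Q$ and factor $P^2+Q^2=(P+iQ)(P-iQ)$ at the end — plus your added (and welcome) observation that $B_q(0)=0$, which justifies applying Theorem~\ref{thm1} even though it excludes $t=0$.
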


\begin{proof} First note that if $q=a+v\in{\mathbb H}_{\mathbb R}$ and $(w)_k$ is the Pochhammer symbol of the complex number $w=a+i|v|$ then an application of Theorem \ref{thm1} gives 
\begin{align*}
&\Gamma(q) \sum\limits_\ell  B_q(\ell )z^\ell =\sum\limits_\ell\sum\limits_k(-1)^k\frac{(q)_k}{k!}(\ell -k)_+^qz^\ell\\
&=\sum\limits_{\ell ,k}\frac{(-1)^k}{k!}\bigg(\Re (w)_k+\frac{v}{|v|}\Im (w)_k\bigg)(\ell -k)_+^a \bigg[\cos (|v|\log (\ell -k)_+)\\
&\qquad\qquad\qquad\qquad\qquad\qquad\qquad\qquad\qquad\qquad\qquad+ \left.\frac{v}{|v|}\sin (|v|\log (\ell -k)_+)\right]z^\ell\\
&=\sum\limits_{\ell ,k}\frac{(-1)^k}{k!}(\ell -k)_+^a \bigg([\Re (w)_k\cos (|v|\log (\ell -k)_+)-\Im (w)_k\sin (|v|\log (\ell -k)_+)]\\
&\qquad\qquad\qquad +\frac{v}{|v|}[\Im (w_k)\cos (|v|\log (\ell -k)_+)+\Re (w)_k\sin (|v|\log (\ell -k)_+)]\bigg)z^\ell\\
&=\sum\limits_{\ell ,k}\frac{(-1)^k}{k!}(\ell -k)_+^a \left[\Re ((w)_ke^{i|v|\log (\ell -k)_+})+\frac{v}{|v|}\Im ((w)_ke^{i|v|\log (\ell -k)_+})\right]z^\ell\\
&=\sum\limits_\ell \left[\Re \left(\sum\limits_k\frac{(-1)^k}{k!}(\ell -k)_+^a (w)_k(\ell -k)_+^{i|v|}\right)\right. \\
&\qquad\qquad\qquad\quad+ \left.\frac{v}{|v|}\Im \left(\sum\limits_k\frac{(-1)^k}{k!}(\ell -k)_+^a (w)_k(\ell -k)_+^{i|v|}\right)\right]z^\ell\\
&=\sum\limits_\ell \left[\Re (\Gamma (w) B_w (\ell ))+\frac{v}{|v|}\Im (\Gamma (w) B_w(\ell ))\right]z^\ell\\
&= \sum\limits_\ell \left[\Re (\Gamma (w) + \frac{v}{|v|}\Im (\Gamma (w))\right] \left[\Re (B_w (\ell ))+\frac{v}{|v|}\Im (B_w(\ell ))\right]z^\ell\\
&= \Gamma (q) \sum\limits_\ell \left[\Re (B_w (\ell ))+\frac{v}{|v|}\Im (B_w(\ell ))\right]z^\ell,
\end{align*}
where we used \eqref{qGamma}.
Let 
$$
Z=\sum\limits_\ell \text{Re}(B_w(\ell ))z^\ell =\dfrac{1}{2}\sum\limits_\ell \left[B_w (\ell )z^\ell +\overline{B_w(\ell )}z^\ell\right]=\frac{1}{2}(A+B)$$ 
and, for $i\in\{1,2,3\}$, 
$$
V_i=\dfrac{v_i}{|v|}\sum\limits_\ell \text{Im}((B_w(\ell ))z^\ell =\frac{1}{2i}\frac{v_i}{|v|}\left[\sum\limits_\ell B_w(\ell )z^\ell -\sum\limits_\ell\overline{B_w(\ell )}z^\ell \right]=\frac{1}{2i}\frac{v_i}{|v|}(A-B)
$$
so that $\sum\limits_\ell B_q(\ell )z^\ell =Z+\sum\limits_{i=1}^3V_ie_i$. Then 
\begin{equation*}
Z^2+\sum\limits_{i=1}^3V_i^2=\dfrac{1}{4}(A+B)^2-\frac{1}{4}(A-B)^2=AB=\left(\sum\limits_\ell B_w(\ell )z^\ell \right)\left(\sum\limits_\ell\overline{B_w(\ell )}z^\ell\right).
\end{equation*}
However for $w\in{\mathbb C}$ with Pochhammer symbol $(w)_j$ we have $(\overline{w})_j=\overline{(w)_j}$ and if $t$ is a real number, $\overline{t^w}=t^{\overline{w}}$. Therefore,
$$
\overline{B_w(t)}=\overline{\frac{1}{\Gamma(w)}\sum\limits_k\frac{(-1)^k}{k!}(w)_j (t-k)_+^w} = \frac{1}{\Gamma(\overline{w})}\sum\limits_k\frac{(-1)^k}{k!}(\overline{w})_j(t-k)_+^{\overline{w}}=B_{\overline{w}}(t),
$$
so that 
$$
Z^2+\sum\limits_{i=1}^3V_i^2=\left(\sum\limits_\ell B_w(\ell)z^\ell\right)\left(\sum\limits_\ell B_{\overline{w}}(\ell )z^\ell \right).
$$
Thus, $\sum\limits_\ell B_q(\ell )z^\ell$ is invertible if and only if $\left(\sum\limits_\ell B_w(\ell)z^\ell\right)\left(\sum\limits_\ell B_{\overline{w}}(\ell )z^\ell \right)\neq 0$. In this case the inverse is given by
\begin{align*}
\bigg(\sum\limits_\ell B_q(\ell )z^\ell\bigg)^{-1} &=\frac{\left(\sum\limits_\ell B_q(\ell )z^\ell \right)\tilde{\phantom{l}}}{\left(\sum\limits_\ell B_w(\ell )z^\ell \right)\left(\sum\limits_\ell\overline{B_w(\ell )}z^\ell\right)}\\ \\
&=\frac{\sum\limits_\ell \left[\Re B_w (\ell ) - \frac{v}{|v|}\Im B_w(\ell )\right]z^\ell}{\left(\sum\limits_\ell B_w(\ell )z^\ell \right)\left(\sum\limits_\ell B_{\overline{w}}(\ell )z^\ell\right)}.\qedhere
\end{align*}
\end{proof}

By the Poisson summation formula, we have
$\sum\limits_\ell B_w(\ell )z^\ell = \sum\limits_\ell \wh{B}_w (z + 2\pi\ell)$. It was shown in \cite{FGMS} that $\sum\limits_\ell \wh{B}_w (z + 2\pi\ell) \neq 0$ iff
\be\label{hurwitz}
\zeta (w,\alpha) + e^{-i \pi w}\,\zeta (w, 1 - \alpha)\neq 0,
\ee
where we take the principal value of the multi-valued function $e^{-i \pi (\mydot)}$ and where $\zeta(w,\alpha)$ denotes the Hurwitz zeta function with parameter $\frac{\xi}{2\pi} =: \alpha \in (0,1)$. 

In \cite{FGMS}, regions $R\subset{\mathbb C}$ 
for which $w = a + iv \in R$, $a >2$, implies that the Hurwitz zeta function \eqref{hurwitz} is zero-free were constructed. These regions satisfy  $\overline{R} = R$ (i.e., $R$ is symmetric with respect to the real axis) and $R$ is either a rectangular region centered at points $2n\in \C$, $n\in \N$, of width less than one and strictly positive height, or an annular-shaped region centered at points $2n\in \C$, $n\in \N$, of width less than one and angular extend $0 \neq \vartheta\in (-\pi,\pi)$.
Let 
\begin{equation}
Q_R := \{q = a+ v\in \bH_\R : a + i |v|\in R\}.\label{region}
\end{equation}
Then, since $\overline{R} = R$, the denominator of \eqref{qinverse} is nonzero provided $q\in Q_R$. Applying the Poisson summation formula to the denominator of \eqref{compfundspline} yields
\[
\wh{L}_q (\xi) = \frac{\wh{B}_q (\xi)}{{\sum\limits_{k\in \Z}}\,\wh{B}_q (\xi + 2\pi k)},\quad\xi\in \R.
\]
We note that $\wh{L}_q(0) = 1$ and $\wh{L}_q(2\pi) = 0$. As the expression in the denominator is $2\pi$-periodic, it suffices to assume that $0 < \xi < 2\pi$. Inserting \eqref{qB} into the above expression for $\wh{L}_q$ and using the facts that $\left(\dfrac{z_1}{z_2}\right)^q = \dfrac{z_1^q}{z_2^q}$, $z_1, z_2\in \C$ and $(i t)^q = i^q t^q$, $t\in \R$, with the convention $\arg t = - \pi $ for $t<0$, gives
\begin{align*}
\wh{L}_q (\xi) = \frac{\left(\frac{1-e^{-i \xi}}{i \xi}\right)^q}{ \sum\limits_{k\in \Z}\,\left(\frac{1-e^{-i (\xi + 2\pi k)}}{i (\xi + 2\pi k)}\right)^q} = \frac{1/\xi^q}{{\sum\limits_{k\in \Z}}\, \frac{1}{(\xi + 2\pi k)^q}}.
\end{align*}
Setting $\alpha : = \frac{\xi}{2\pi}\in (0,1)$, the sum in the above denominator can be rewritten in the form
\begin{eqnarray}
\sum_{k\in \Z}\, \frac{1}{(k + \alpha)^q}  & = & \sum_{k=0}^\infty\, \frac{1}{(k + \alpha)^q} + \sum_{k=0}^\infty\, \frac{1}{(\alpha - 1 - k)^q} 
 \nonumber\\
& = &\sum_{k=0}^\infty\, \frac{1}{(k + \alpha)^q} + e^{-i \pi q}\sum_{k=0}^\infty\, \frac{1}{(k + 1 - \alpha)^q}
\nonumber\\
& = &\zeta (q,\alpha) + e^{-i \pi q}\,\zeta (q, 1 - \alpha),
\label{Zerlegung in zetas}
\end{eqnarray}
where we have used Lemma \ref{lem3}. With the above observations, we obtain the next result which provides zero-free regions for the quaternionic zeta function.

\begin{proposition}
Let $Q_R$ be defined as in (\ref{region}). The combination 
\[
\zeta (q,\alpha) + e^{-i \pi q}\,\zeta (q, 1 - \alpha)
\]
of quaternionic Hurwitz zeta functions is zero-free for all $\alpha\in (0,1)$ provided $q\in Q_R$.
\end{proposition}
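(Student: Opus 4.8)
The plan is to exploit the orthogonal idempotents $\chi_\pm(v)$ of \eqref{chi props}, which diagonalize the non-commutative algebra $\bH_\C$ along the direction $v/|v|$ and thereby split the quaternionic non-vanishing statement into two independent \emph{complex-scalar} non-vanishing statements, each already settled in \cite{FGMS}. Concretely, set
\[
A(s) := \zeta(s,\alpha) + e^{-i\pi s}\,\zeta(s,1-\alpha),\qquad s\in\C,
\]
the combination appearing in \eqref{hurwitz}. I would first invoke the splitting lemma for the quaternionic Hurwitz zeta function to write $\zeta(q,\alpha)=\chi_-(v)\zeta(w,\alpha)+\chi_+(v)\zeta(\overline w,\alpha)$, and likewise for $\zeta(q,1-\alpha)$, with $w=a+i|v|$.

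The key computation is the matching splitting of the scalar factor $e^{-i\pi q}$. Writing $e^{-i\pi q}=e^{-i\pi a}e^{-i\pi v}$ and expanding the pure-quaternion exponential via $(v/|v|)^2=-1$ gives $e^{-i\pi v}=\cosh(\pi|v|)-i\,\tfrac{v}{|v|}\sinh(\pi|v|)$; since $i\,\tfrac{v}{|v|}=\chi_+(v)-\chi_-(v)$ and $1=\chi_+(v)+\chi_-(v)$, this collapses to
\[
e^{-i\pi q}=\chi_-(v)\,e^{-i\pi w}+\chi_+(v)\,e^{-i\pi\overline w},
\]
so the exponential is diagonalized along the \emph{same} idempotents as the zeta values, with the $\chi_-$ channel carrying $w$ and the $\chi_+$ channel carrying $\overline w$. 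Multiplying $e^{-i\pi q}$ into $\zeta(q,1-\alpha)$ and using $\chi_\pm(v)^2=\chi_\pm(v)$, $\chi_+(v)\chi_-(v)=\chi_-(v)\chi_+(v)=0$ from \eqref{chi props} (together with the fact that complex scalars are central in $\bH_\C$), every cross term drops out, and the whole expression reduces to
\[
\zeta(q,\alpha)+e^{-i\pi q}\,\zeta(q,1-\alpha)=\chi_-(v)\,A(w)+\chi_+(v)\,A(\overline w).
\]

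It then remains to read off non-vanishing. An element $\chi_-(v)A+\chi_+(v)B$ with $A,B\in\C$ vanishes if and only if $A=B=0$: multiplying on the left by $\chi_-(v)$ and by $\chi_+(v)$ and using \eqref{chi props} isolates $\chi_-(v)A$ and $\chi_+(v)B$, and since $A,B$ are central while $1\mp i\,\tfrac{v}{|v|}\neq 0$, each factor must vanish separately. Finally, $q\in Q_R$ means $w=a+i|v|\in R$, and by the symmetry $\overline R=R$ also $\overline w\in R$; by the result of \cite{FGMS} recorded in \eqref{hurwitz}, both $A(w)\neq0$ and $A(\overline w)\neq0$ for every $\alpha\in(0,1)$. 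Hence $\chi_-(v)A(w)+\chi_+(v)A(\overline w)\neq0$ (indeed invertible), which is the assertion. The step I expect to require the most care is the diagonalization of $e^{-i\pi q}$: one must track the principal-value branch of the quaternionic exponential (the semigroup law fails in general) and verify that it pairs with exactly the same idempotents as $\zeta(q,\cdot)$, so that the orthogonality relations \eqref{chi props} genuinely annihilate the cross terms. The symmetry $\overline R=R$ is then precisely what upgrades the complex zero-free statement of \cite{FGMS} to both idempotent channels at once.
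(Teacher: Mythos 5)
Your proof is correct, and it takes a genuinely different route from the paper's. The paper never manipulates the zeta combination directly: it deduces the proposition from the chain formed by its quaternion-inverse theorem (invertibility of $\sum_\ell B_q(\ell)z^\ell$ characterized through the product $\bigl(\sum_\ell B_w(\ell)z^\ell\bigr)\bigl(\sum_\ell B_{\overline{w}}(\ell)z^\ell\bigr)$ via the $Z^2+\sum_i V_i^2$ computation), the Poisson summation formula identifying $\sum_\ell B_q(\ell)e^{i\ell\xi}$ with $\sum_k\wh{B}_q(\xi+2\pi k)$, the identity \eqref{Zerlegung in zetas} exhibiting that periodization as an invertible multiple of $\zeta(q,\alpha)+e^{-i\pi q}\zeta(q,1-\alpha)$, and finally the complex zero-free regions of \cite{FGMS} applied to both $w$ and $\overline{w}$ using $\overline{R}=R$. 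You instead diagonalize the quaternionic expression itself: the idempotent splitting of $\zeta(q,\cdot)$, your matching splitting $e^{-i\pi q}=\chi_-(v)e^{-i\pi w}+\chi_+(v)e^{-i\pi\overline{w}}$ (which is correct: $e^{-i\pi v}=\cosh(\pi|v|)-i\tfrac{v}{|v|}\sinh(\pi|v|)=\chi_-(v)e^{\pi|v|}+\chi_+(v)e^{-\pi|v|}$), and the orthogonality relations \eqref{chi props} give, in your notation $A(s)=\zeta(s,\alpha)+e^{-i\pi s}\zeta(s,1-\alpha)$, the clean identity $\zeta(q,\alpha)+e^{-i\pi q}\zeta(q,1-\alpha)=\chi_-(v)A(w)+\chi_+(v)A(\overline{w})$, bypassing B-splines, Poisson summation, and the invertibility theorem altogether; both arguments ultimately rest on the same input from \cite{FGMS} plus the symmetry $\overline{R}=R$. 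Two remarks. First, you paired $\chi_-(v)$ with $w$, which is what the proof of the paper's zeta-splitting lemma actually derives; the lemma's \emph{statement} has $\chi_\pm$ interchanged (a typo in the paper), and had you taken that statement at face value while keeping your (correct) exponential splitting, the channels would have been mismatched, producing the mixed combinations $\zeta(w,\alpha)+e^{-i\pi\overline{w}}\zeta(w,1-\alpha)$ to which \cite{FGMS} does not apply — so the consistency check you flagged as the delicate step is indeed the crux, and you resolved it correctly. Second, your argument yields slightly more than the proposition asserts: since $A(w)\neq0$ and $A(\overline{w})\neq0$, the element $\chi_-(v)A(w)+\chi_+(v)A(\overline{w})$ is actually invertible in $\bH_\C$ with inverse $\chi_-(v)A(w)^{-1}+\chi_+(v)A(\overline{w})^{-1}$; in $\bH_\C$ nonzero does not imply invertible, and invertibility is what \eqref{compfundspline} and \eqref{L} really require — the paper secures it through its quaternion-inverse theorem, whereas you get it, together with an explicit formula for the interpolation filter, for free.
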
 

Combining the above results and observations yields the following theorem.

\begin{theorem}
Suppose that ${B}_{q}$ is a quaternionic B-spline with $q\in Q_R$. 
Then
\begin{align}\label{L}
{L}_q (x) & := \frac{1}{2\pi}\,\int\limits_{\R} \frac{\xi^{-q}\,e^{i \xi x}}{\zeta (q, \xi/2\pi) + e^{-i \pi q}\zeta (q, 1 - \xi/2\pi)}\,d\xi, \quad x\in \R,
\end{align}
is a fundamental interpolating spline of quaternionic order $q$ in the sense that
\[
{L}_q ({m}) = \delta_{{m},0}, \quad \text{for all }{m}\in \Z.
\]
The Fourier inverse in \eqref{L} holds in the $L^1$- and $L^2$-sense.
\end{theorem}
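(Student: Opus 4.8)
The plan is to reduce the statement to two independent assertions: first, that the integrand of \eqref{L} is a genuine element of $L^1(\R,\mathbb{H}_{\mathbb C})\cap L^2(\R,\mathbb{H}_{\mathbb C})$, so that the inverse Fourier transform makes sense in both senses and $L_q$ is a continuous function; and second, that the $2\pi$-periodization of this integrand is the constant $1$, which forces the samples to be $\delta_{m,0}$. Throughout I would work not with the $\zeta$-form of the integrand but with the equivalent intermediate form produced in the derivation preceding the theorem, namely $\wh{L}_q(\xi)=\wh{B}_q(\xi)\,g(\xi)^{-1}$, where $g(\xi):=\sum_{k}\wh{B}_q(\xi+2\pi k)$ is exactly the periodized denominator of \eqref{compfundspline} (equivalently the absolutely convergent symbol $\sum_k B_q(k)z^k$, $z=e^{i\xi}$). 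This is the form in which the interpolation property becomes transparent.

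First I would record the analytic properties of $g$. By the absolute convergence of the Fourier series $\sum_k B_q(k)e^{ik\xi}$ established above, $g$ is continuous and $2\pi$-periodic. Since $q\in Q_R$ and $\overline R=R$, the preceding Proposition on zero-free regions together with \eqref{Zerlegung in zetas} shows that $g(\xi)$ is invertible in $\mathbb{H}_{\mathbb C}$ for every $\xi$. Because inversion is continuous on the invertible complex quaternions, $\xi\mapsto g(\xi)^{-1}$ is continuous and $2\pi$-periodic, hence bounded on the compact period: $M:=\sup_\xi|g(\xi)^{-1}|<\infty$. As $\Sc q>1$ we have $\wh{B}_q\in L^1(\R,\mathbb{H}_{\mathbb C})\cap L^2(\R,\mathbb{H}_{\mathbb C})$, and using a bound of the form $|p\,p'|\le c\,|p|\,|p'|$ for complex quaternions yields $|\wh{L}_q(\xi)|\le cM\,|\wh{B}_q(\xi)|$. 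Hence $\wh{L}_q\in L^1\cap L^2$, the transform in \eqref{L} is meaningful in both the $L^1$- and $L^2$-sense, and $L_q$ is continuous (being the inverse transform of an $L^1$ function), so the point values $L_q(m)$ are well defined.

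For the interpolation property I would compute the $2\pi$-periodization
\[
\Phi(\xi):=\sum_{k}\wh{L}_q(\xi+2\pi k)=\sum_{k}\wh{B}_q(\xi+2\pi k)\,g(\xi+2\pi k)^{-1}.
\]
Since $g$, and therefore $g^{-1}$, is $2\pi$-periodic, the factor $g(\xi)^{-1}$ is identical in every term and may be pulled out of the absolutely convergent sum, giving $\Phi(\xi)=\big(\sum_k\wh{B}_q(\xi+2\pi k)\big)g(\xi)^{-1}=g(\xi)\,g(\xi)^{-1}=1$. I would then compare Fourier coefficients of the constant function $\Phi\equiv1$ with the defining integral: because $\wh{L}_q\in L^1$, interchanging sum and integral shows that the $m$-th Fourier coefficient of $\Phi$ equals $\frac{1}{2\pi}\int_{\R}\wh{L}_q(\xi)e^{-im\xi}\,d\xi=L_q(-m)$, while $\Phi\equiv1$ forces this coefficient to be $\delta_{m,0}$. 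Hence $L_q(-m)=\delta_{m,0}$, and by symmetry $L_q(m)=\delta_{m,0}$ for all $m\in\Z$.

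The main obstacle is the step from pointwise invertibility of $g$ (guaranteed exactly on $Q_R$) to the uniform bound $M<\infty$, and the attendant verification that $\wh{L}_q$ truly lies in $L^1$; this is what legitimises both the Fourier inversion in \eqref{L} and the interchange of summation and integration in $\Phi$. The non-commutativity of $\mathbb{H}_{\mathbb C}$ turns out to be harmless precisely because $g$ is $2\pi$-periodic, so that the same quaternion $g(\xi)^{-1}$ appears in every term of the periodization and can be factored to one side; the only care needed is to keep the multiplication order consistent with the convention under which \eqref{compfundspline} was derived, noting that the collapse $\Phi\equiv1$ holds for either order.
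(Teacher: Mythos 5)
Your proof is correct, and it rests on the same essential inputs as the paper: invertibility of the periodized symbol $g(\xi)=\sum_k\wh{B}_q(\xi+2\pi k)$ for $q\in Q_R$, obtained from the quaternion-inverse theorem, Poisson summation, the zero-free regions of \cite{FGMS}, and the symmetry $\overline{R}=R$. The difference is one of logical direction. The paper offers no self-contained proof of this theorem: it derives \eqref{compfundspline} \emph{formally} from the interpolation ansatz \eqref{compint2}--\eqref{complexint2} (a necessity argument --- any fundamental spline of that form must have this transform), reduces the denominator to the zeta combination \eqref{Zerlegung in zetas}, establishes zero-freeness on $Q_R$, and then states the theorem as a combination of these observations. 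You run the converse, sufficiency direction: taking \eqref{L} as the definition, you show $\wh{L}_q\in L^1\cap L^2$ (so $L_q$ is a well-defined continuous function and point evaluation is legitimate), collapse the $2\pi$-periodization of $\wh{L}_q$ to the constant $1$ by pulling the periodic factor $g(\xi)^{-1}$ out of the absolutely convergent sum, and identify $L_q(-m)$ with the $m$-th Fourier coefficient of that constant. This is precisely the verification the paper leaves implicit, and it is what actually certifies that the displayed integral interpolates; what the paper's route buys instead is the derivation of the formula itself, i.e.\ uniqueness of $\wh{L}_q$ within the ansatz class. Your remark that non-commutativity is harmless is also right, since $g$ commutes with its own inverse (indeed all quantities involved lie in the commutative subalgebra $\C+\C\,v/|v|$ of $\bH_{\C}$). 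Two small points to tidy: the paper's zero-free proposition covers only $\alpha=\xi/2\pi\in(0,1)$, so at $\xi\in2\pi\Z$ you should note directly that $g=1$ because $\wh{B}_q(2\pi k)=\delta_{k,0}$; and no symmetry of $L_q$ is needed at the end --- $L_q(-m)=\delta_{m,0}$ for all $m\in\Z$ already yields $L_q(m)=\delta_{m,0}$, since $\delta_{-m,0}=\delta_{m,0}$.
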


In Figures \ref{fig1}--\ref{fig5}, two fundamental cardinal splines with $q_1 = 6.2 + \frac{1}{2\sqrt{2}} e_1 - \frac14 e_2 + \frac14 e_3$ and $q_2 = 2.5 + \frac{1}{4\sqrt{2}} e_1 + \frac18 e_2 - \frac{\sqrt{13}}{8} e_3$ are depicted.
The graphs of these functions were constructed by sampling the integrand in Eqn.~\eqref{L} and performing an inverse Fourier transform on this set of sampled values.

\begin{figure}[h!]
\begin{center}
\includegraphics[width=4cm, height= 3cm]{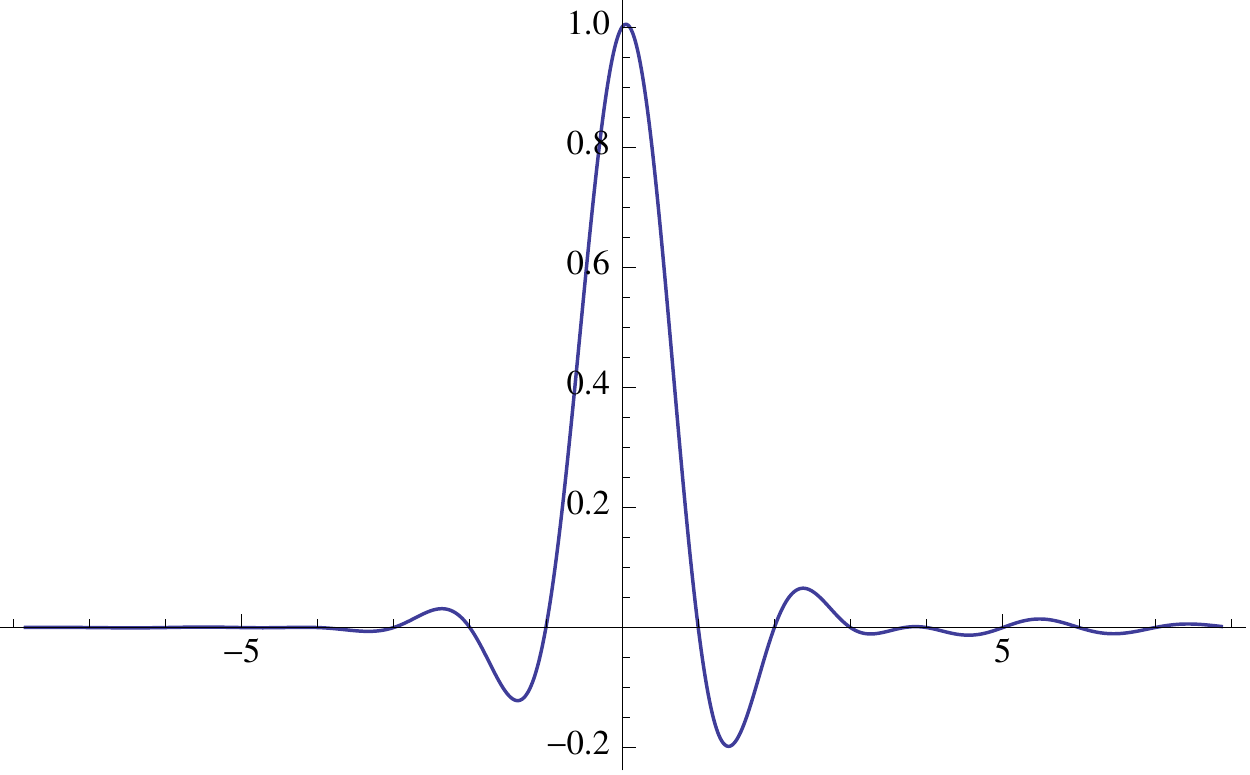}\hspace*{1.5cm}
\includegraphics[width=4cm, height= 3cm]{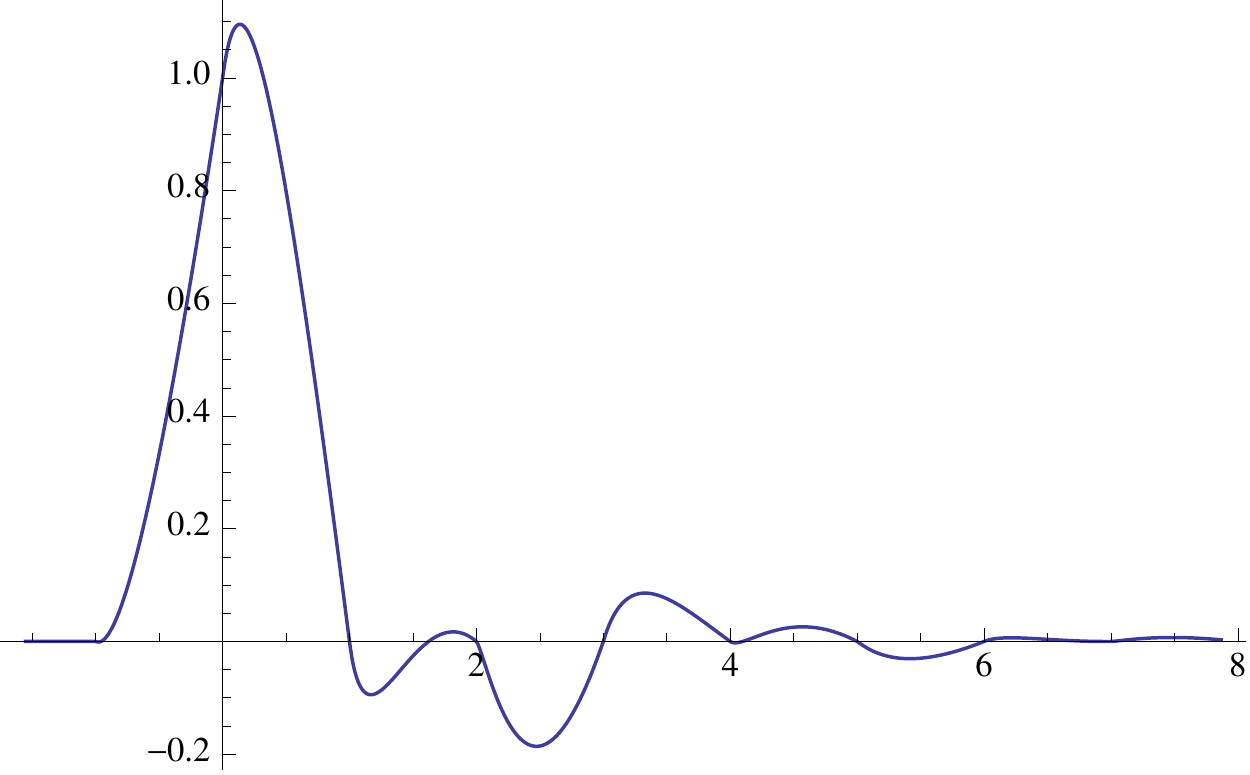}
\caption{The scalar parts of the fundamental cardinal splines $L_{q_1}$ (left) and $L_{q_2}$ (right).}\label{fig1}
\end{center}
\end{figure}

\begin{figure}[h!]
\begin{center}
\includegraphics[width=4cm, height= 3cm]{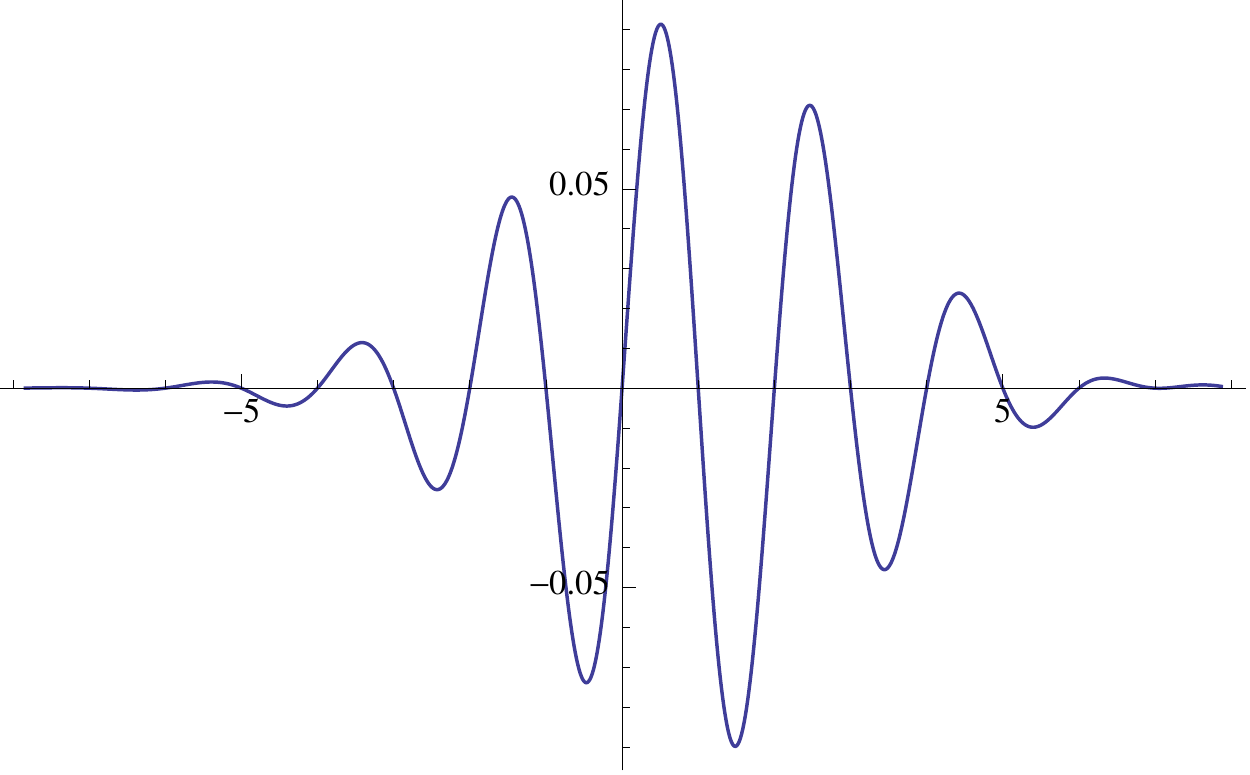}\hspace*{1.5cm}
\includegraphics[width=4cm, height= 3cm]{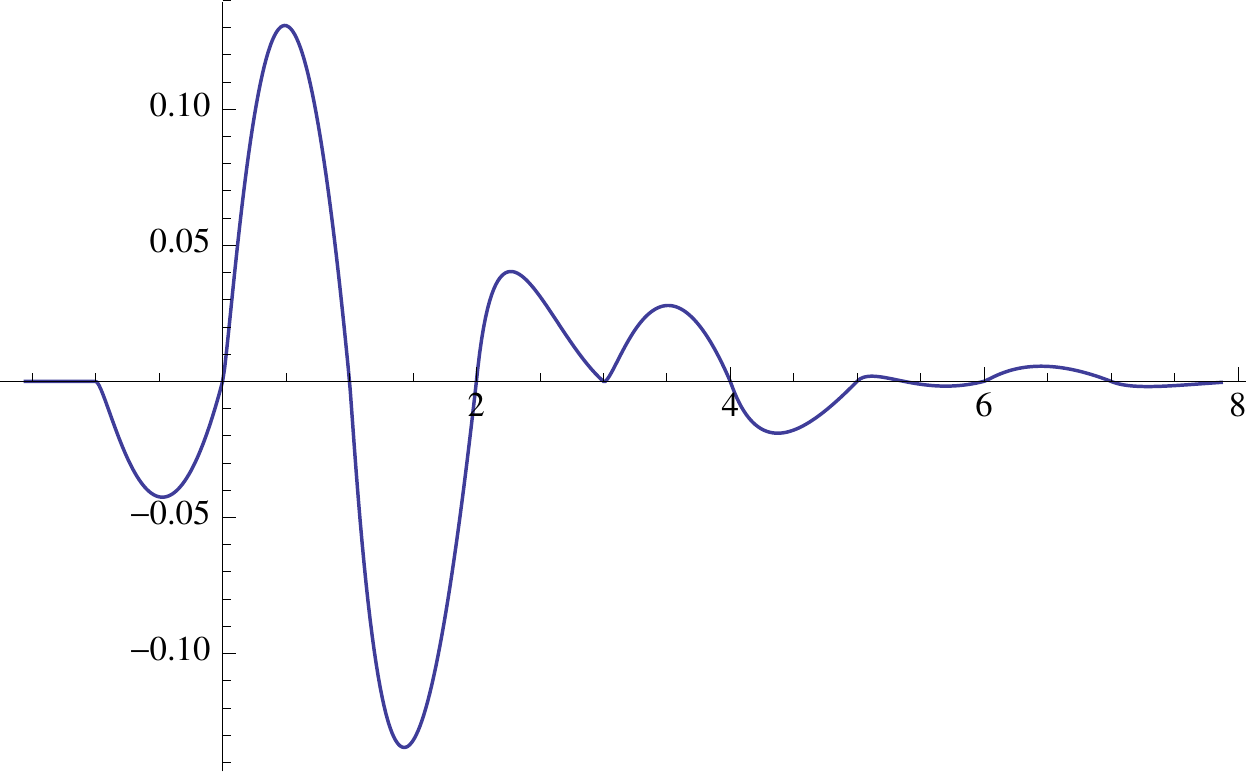}
\caption{The $e_1$-vector part of the fundamental cardinal splines $L_{q_1}$ (left) and $L_{q_2}$ (right)}\label{fig2}
\end{center}
\end{figure}

\begin{figure}[h!]
\begin{center}
\includegraphics[width=4cm, height= 4cm]{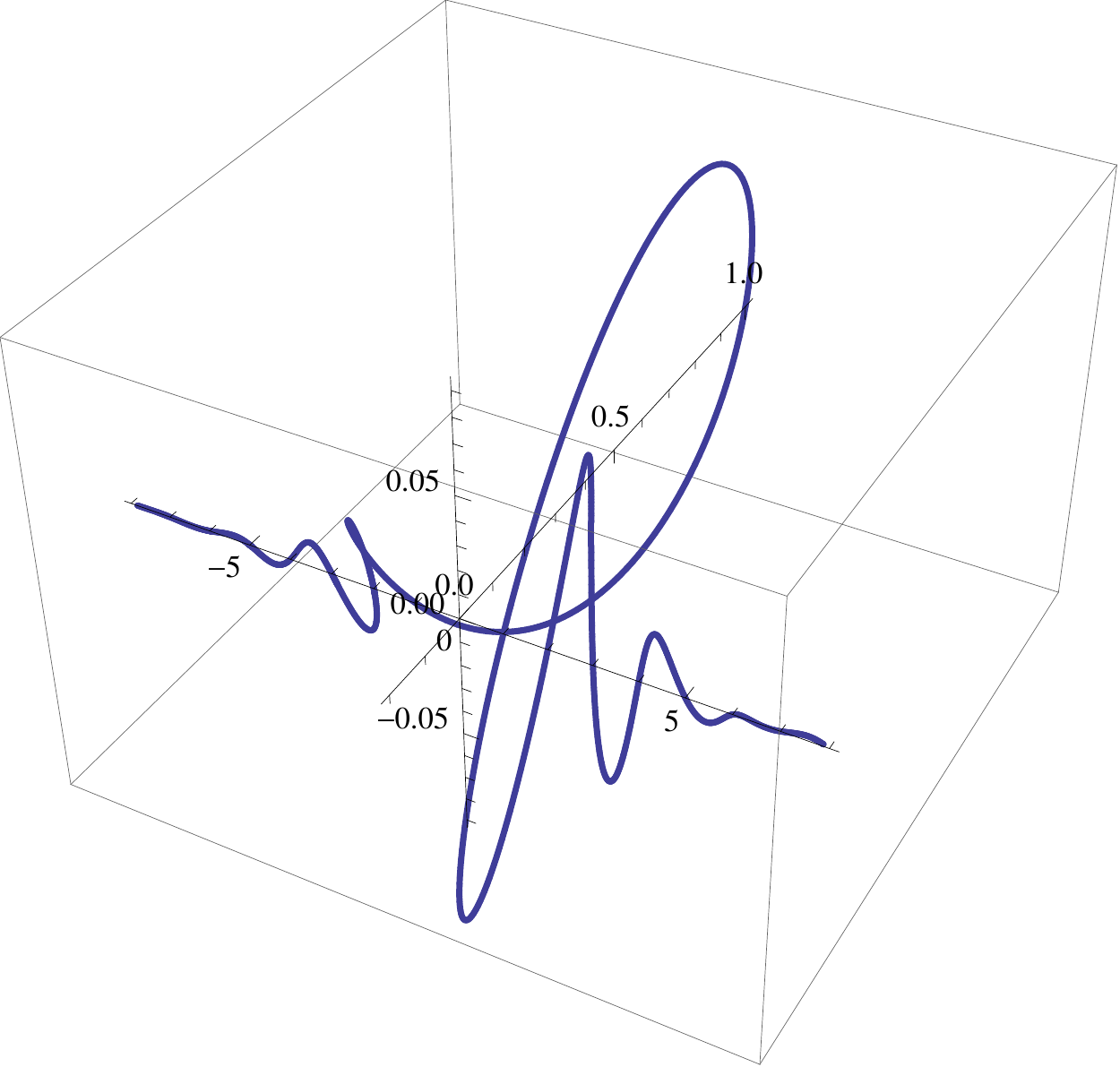}\hspace*{1.5cm}
\includegraphics[width=4cm, height= 4cm]{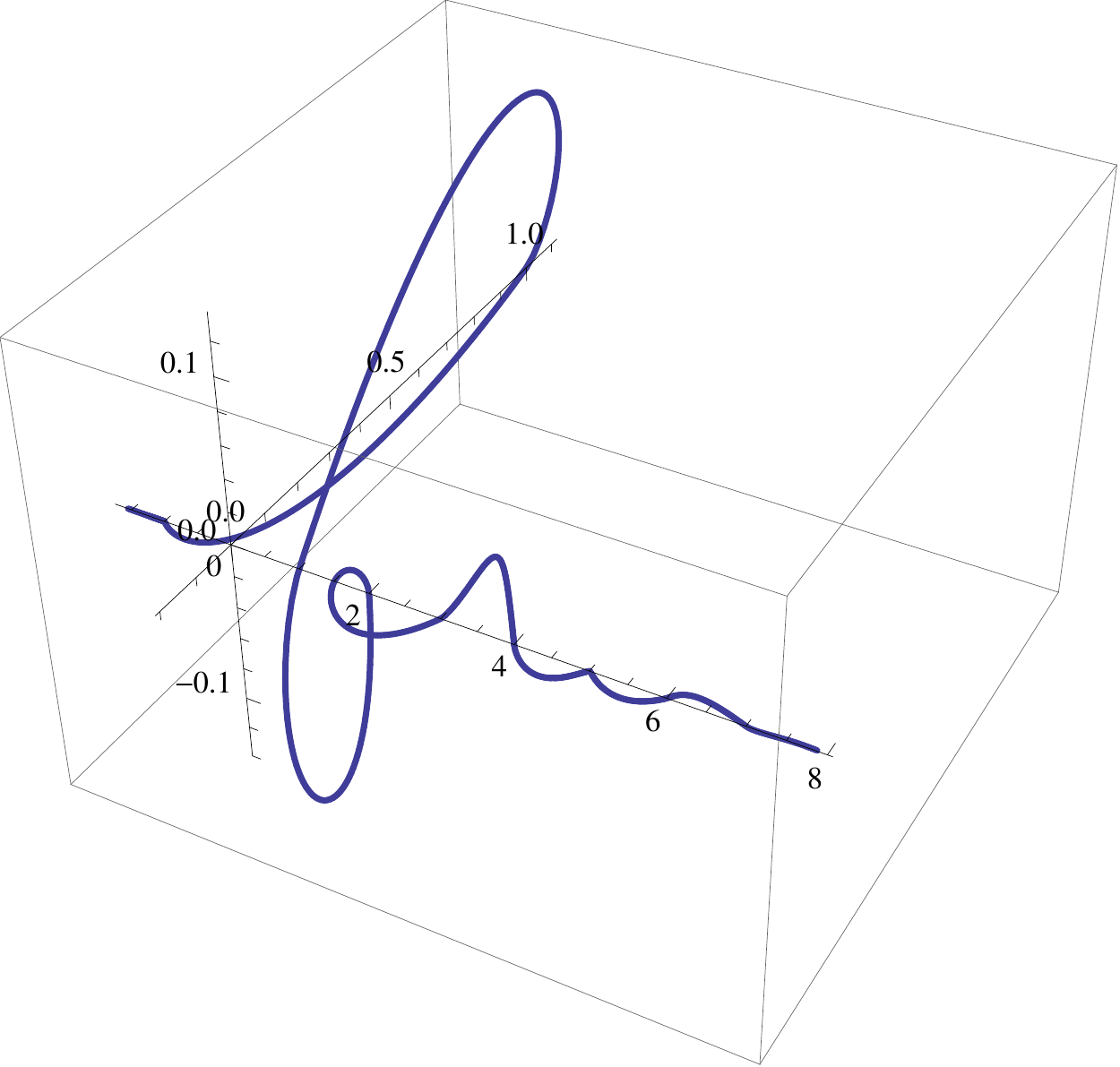}
\caption{The scalar part of the fundamental cardinal splines $L_{q_1}$ (left) and $L_{q_2}$ (right) versus the $e_1$-vector part.}\label{fig5}
\end{center}
\end{figure}

We can use \eqref{compfundspline} to obtain an estimate on the decay of the coefficients $c_k^{(q)}$. The arguments are based on those given in \cite{schoen1,schoen2} for the case $q\in \N$. Equations \eqref{compint2}, \eqref{complexint2}, and \eqref{compfundspline} imply that
\begin{equation}
\sum\limits_{k\in \Z} c_k^{(q)} e^{i k \xi} = \frac{1}{\sum\limits_{k\in \Z}\,B_q (k)\,z^k} =: \phi_q(\xi).\label{c FS}
\end{equation}
The right-hand side $\phi_q$ of the above equation is a function of $z=e^{i\xi}$ which does not have any poles on the unit circle $|z|=1$ provided $q\in Q_R$. As 
$\wh{B}_q\in C^{\lfloor\text{Sc\,}q\rfloor +1}(\R)$ it follows that the Fourier coefficients of $\phi_q(\xi)$ satisfy
\be\label{ckest}
|c_k^{(q)}| \leq M_q\ |k|^{-\lfloor\Sc q\rfloor-1},
\ee
for some positive constant $M_q$. 
The estimate \eqref{ckest} implies the following result.

\begin{proposition}
The fundamental cardinal spline $L_q$ with $q\in Q_R$ satisfies the following point-wise estimate:
\be\label{Lqest}
|L_q (x)| \leq A_q\, |x|^{-\lfloor\Sc q\rfloor}, \quad x\in \R,
\ee 
for some positive constant $A_q$, provided $\Sc q>1$.
\end{proposition}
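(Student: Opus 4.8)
The plan is to bound $L_q$ directly from its defining expansion \eqref{compint2}, combining the coefficient decay \eqref{ckest} with the spline decay supplied by property (i), namely $B_q\in\cO(|t|^{-\Sc q-1})$. First I would record the two scalar ingredients that drive the argument. Since $\Sc q>1$ we have $\lfloor\Sc q\rfloor+1\geq 2$, so \eqref{ckest} yields $\sum_k|c_k^{(q)}|<\infty$; and since $\Sc q+1>2$ the function $B_q$ is summable over integer shifts, with the periodization $\sum_k|B_q(t-k)|$ bounded uniformly in $t$ (using that $B_q$ is bounded and continuous and decays like $|t|^{-\Sc q-1}$). I would then fix constants $C_c,C_s,C_B$ so that $|c_k^{(q)}|\leq C_c|k|^{-\lfloor\Sc q\rfloor-1}$ for $k\neq 0$, $|B_q(t)|\leq C_s|t|^{-\Sc q-1}$ for $|t|\geq 1$, and $\sup_t\sum_k|B_q(t-k)|\leq C_B$.

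The core of the proof is a half-splitting of the sum for $|x|\geq 2$. Applying the triangle inequality to $|L_q(x)|\leq\sum_k|c_k^{(q)}|\,|B_q(x-k)|$, I would partition the index set according to the distance of $k$ from $x$. In the \emph{far} region $|x-k|\geq|x|/2$ I bound $|B_q(x-k)|\leq C_s(|x|/2)^{-\Sc q-1}$ (valid since $|x|/2\geq 1$) and factor it out, leaving the convergent tail $\sum_k|c_k^{(q)}|$; this contributes a term of size $O(|x|^{-\Sc q-1})$. In the \emph{near} region $|x-k|<|x|/2$ every such index satisfies $|k|>|x|/2\geq 1$, so $|c_k^{(q)}|\leq C_c(|x|/2)^{-\lfloor\Sc q\rfloor-1}$; factoring this out leaves $\sum_k|B_q(x-k)|\leq C_B$, giving a contribution of size $O(|x|^{-\lfloor\Sc q\rfloor-1})$. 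Adding the two pieces and using $\Sc q+1\geq\lfloor\Sc q\rfloor+1>\lfloor\Sc q\rfloor$ (so both exponents exceed $\lfloor\Sc q\rfloor$ and $|x|^{-1}\leq 1$) produces $|L_q(x)|\leq C|x|^{-\lfloor\Sc q\rfloor}$ on $|x|\geq 2$.

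It then remains to dispose of the bounded range $0<|x|<2$. There $L_q$ is continuous, being a uniformly convergent series of translates of the continuous bounded function $B_q$ with summable coefficients, hence $|L_q|$ is bounded on this range; meanwhile $|x|^{-\lfloor\Sc q\rfloor}\geq 2^{-\lfloor\Sc q\rfloor}$, so enlarging the constant absorbs this part. Taking $A_q$ to be the maximum of the constants produced in the two regimes yields \eqref{Lqest}.

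I expect the only genuinely delicate point to be the bookkeeping that matches each decay rate to the correct region: the coefficient decay controls the near region while the spline decay controls the far region, and one must verify that $\lfloor\Sc q\rfloor$ is the weaker of the two exponents so that it governs the final estimate. The hypothesis $\Sc q>1$ enters precisely here, guaranteeing simultaneously the summability of $\{c_k^{(q)}\}$ and the boundedness of the periodization $\sum_k|B_q(x-k)|$; without either, the half-splitting would fail to close.
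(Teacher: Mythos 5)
Your proof is correct, but it follows a genuinely different route from the paper's. The paper's proof exploits two structural facts that you never invoke: the causality of the quaternionic B-spline (from \eqref{time domain}, every term $(t-k)_+^{q-1}$ with $k\geq 0$ vanishes for $t<0$, so $B_q$ is supported in $[0,\infty)$) and the symmetry $\wh{L}_q(-\xi)=\wh{L}_q(\xi)$, hence $L_q(-x)=L_q(x)$, which reduces the estimate to $x\leq 0$. For such $x$ the expansion \eqref{compint2} runs only over $k\leq\lfloor x\rfloor$, so every surviving index already satisfies $|k|\geq |x|$; the paper then needs nothing more than the uniform bound $|B_q|\leq K_q$ and the coefficient decay \eqref{ckest}, since the tail sum $\sum_{k\leq\lfloor x\rfloor}|k|^{-\lfloor\Sc q\rfloor-1}=\mathcal{O}(|x|^{-\lfloor\Sc q\rfloor})$ finishes the proof in one line. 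Your near/far splitting at $|x-k|=|x|/2$ replaces these structural inputs with the polynomial decay of $B_q$ and the uniform boundedness of the periodization $\sum_k|B_q(\cdot-k)|$, so it makes no use of one-sided support or of evenness of $L_q$; it would apply verbatim to any generator with summable coefficients and decay of order greater than one. It also yields more than is claimed: your two regions contribute $\mathcal{O}(|x|^{-\lfloor\Sc q\rfloor-1})$ and $\mathcal{O}(|x|^{-\Sc q-1})$ respectively, so you actually prove the strictly stronger bound $|L_q(x)|\leq A_q|x|^{-\lfloor\Sc q\rfloor-1}$ for $|x|\geq 2$, whereas the paper, using only boundedness of $B_q$ rather than its decay, loses that extra power. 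The trade-off is length and dependence: the paper's proof is shorter but silently rests on causality and symmetry (neither of which is re-verified at that point in the text), while yours is longer, self-contained, and more robust.
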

\begin{proof}
As $\wh{L}_q(-\xi )=\wh{L}_q (\xi )$ and therefore $L_q(-x)=L_q(x)$, it suffices to consider $x \leq 0$. 
From \eqref{compint2}, we obtain
\begin{equation*}|L_q(x)| =\bigg|\sum_{k=-\infty}^{\lfloor x\rfloor}c_q^kB_q(x-k)\bigg| \leq K_q\sum_{k=-\infty}^{\lfloor x\rfloor}|k|^{-\lfloor\Sc q\rfloor-1} \leq A_q\, |x|^{-\lfloor\Sc q\rfloor},
\end{equation*}
where in the inequality we used the fact that the quaternionic B-splines are bounded above by some positive constant $K_q$.
This yields the statement.
\end{proof}
As a direct corollary of Theorem \ref{thm1}, we now provide  a structure theorem for the fundamental cardinal quaternionic splines $L_q$,  relating them to the fundamental cardinal complex splines $B_z$.
\begin{corollary}
Suppose $q = a + v\in Q_R$ and $z=a+i|v|\in{\mathbb C}$. Then
\[
L_q(t)=\Re L_z(t)+\frac{v}{|v|}\Im L_z(t),\qquad t\in \R.
\]
\end{corollary}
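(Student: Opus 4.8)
The plan is to realize $L_q$ as the image of the complex fundamental spline $L_z$ under the same $\R$-algebra embedding $\C\hookrightarrow\bH_\R$ that is implicit in Theorem~\ref{thm1}. Write $\jmath:=v/|v|$, so that $\jmath^2=-1$, and set $\C_v:=\R\oplus\R\jmath\subset\bH_\R$. Define $\Phi:\C\to\bH_\R$ by $\Phi(\zeta):=\Re\zeta+\jmath\,\Im\zeta$. Using only $\jmath^2=-1$ one checks in one line that $\Phi$ is an injective, unital, \emph{isometric} homomorphism of real algebras onto $\C_v$; in particular $\Phi$ fixes $\R$ and satisfies $\Phi(\zeta_1\zeta_2)=\Phi(\zeta_1)\Phi(\zeta_2)$. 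In this notation Theorem~\ref{thm1} reads $B_q(t)=\Phi(B_z(t))$ for $t\neq 0$, and since $B_q(n)=B_z(n)=0$ for integers $n\le 0$ (from \eqref{time domain}), the sampled identity $B_q(n)=\Phi(B_z(n))$ holds for every $n\in\Z$. Note also that $L_z$ and $L_{\overline z}$ are well-defined because $q\in Q_R$ forces $z=a+i|v|\in R$ and, by $\overline R=R$, also $\overline z\in R$.

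The decisive step is to show that the interpolation coefficients obey the very same rule, $c_k^{(q)}=\Phi(c_k^{(z)})$ for all $k\in\Z$. I would apply $\Phi$ to the complex cardinal identities $\sum_k c_k^{(z)}B_z(m-k)=\delta_{m,0}$, $m\in\Z$. Since $\Phi$ is $\R$-linear and multiplicative and fixes the real data $\delta_{m,0}$, and since $\Phi(B_z(m-k))=B_q(m-k)$, this produces $\sum_k\Phi(c_k^{(z)})\,B_q(m-k)=\delta_{m,0}$ for all $m$; that is, the real-quaternionic sequence $\{\Phi(c_k^{(z)})\}_k$ solves the quaternionic cardinal interpolation problem \eqref{compint2}--\eqref{complexint2}. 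The Fourier coefficients $\{c_k^{(q)}\}_k$ of $\phi_q$ solve the same problem, and for $q\in Q_R$ the symbol $\sum_k B_q(k)e^{ik\xi}$ is a unit (it is nonvanishing for such $q$ and has an absolutely convergent Fourier expansion), so the solution is unique; hence $c_k^{(q)}=\Phi(c_k^{(z)})$. Equivalently, and bypassing any appeal to uniqueness, I can factor the filter \eqref{c FS} through the idempotents $\chi_\pm(v)$ as $\phi_q=\chi_-(v)\,\phi_z+\chi_+(v)\,\phi_{\overline z}$ and then verify $\phi_{\overline z}=\sum_k\overline{c_k^{(z)}}\,e^{ik\xi}$ by a Cauchy-product argument resting on $B_{\overline z}(k)=\overline{B_z(k)}$; comparing Fourier coefficients again yields $c_k^{(q)}=\Phi(c_k^{(z)})$.

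Granting the coefficient identity, the conclusion follows at once. Each summand $c_k^{(q)}B_q(\mydot-k)$ is a product of two elements of $\C_v$, hence lies in $\C_v$ and equals $\Phi\big(c_k^{(z)}B_z(\mydot-k)\big)$ by multiplicativity of $\Phi$. Summing over $k$ and using that $\Phi$ is continuous (a real-linear isometry) so that it commutes with the series, which converges absolutely and uniformly by \eqref{ckest} together with the boundedness of $B_q$, I obtain
\[
L_q=\sum_{k\in\Z}c_k^{(q)}B_q(\mydot-k)=\sum_{k\in\Z}\Phi\big(c_k^{(z)}B_z(\mydot-k)\big)=\Phi\Big(\sum_{k\in\Z}c_k^{(z)}B_z(\mydot-k)\Big)=\Phi(L_z),
\]
which is precisely $\Re L_z+\frac{v}{|v|}\Im L_z$, as claimed.

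I expect the middle step, $c_k^{(q)}=\Phi(c_k^{(z)})$, to be the only real obstacle. The delicate point is that $\Phi$ is merely $\R$-linear, not $\C$-linear, so it cannot be commuted past the complex scalars $e^{ik\xi}$; indeed $\Phi(\phi_z)\neq\phi_q$ in general. The argument must therefore be confined either to the interpolation equations, which involve only the integer samples $B_z(n)$ and the real data $\delta_{m,0}$ (where $\Phi$ is genuinely multiplicative), or to the explicit projector factorization of the filter. In both routes the required non-degeneracy---uniqueness of the bi-infinite solution, respectively invertibility of the symbol---is exactly what the zero-free region $Q_R$ of \eqref{region} supplies, so no analytic input beyond \eqref{ckest} and \eqref{region} is needed.
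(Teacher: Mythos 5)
Your proof is correct and takes essentially the same route as the paper's: the paper also passes to the complex coefficients $w_j$ solving $\sum_{j}B_z(i-j)w_j=\delta_{i,0}$, sets $c_j=\Re w_j+\frac{v}{|v|}\Im w_j$ (your $\Phi(w_j)$), and expands $\sum_j B_q(t-j)c_j$ term by term using $\bigl(\frac{v}{|v|}\bigr)^2=-1$ and Theorem \ref{thm1} --- precisely your multiplicativity-of-$\Phi$ computation written out by hand. The only difference is one of rigor in your favor: you justify the coefficient identity $c_k^{(q)}=\Phi(c_k^{(z)})$ explicitly (via invertibility of the symbol for $q\in Q_R$ and uniqueness of the bi-infinite solution), whereas the paper assumes this implicitly in its opening equality $L_q(t)=\sum_j B_q(t-j)c_j$.
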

\begin{proof} Let $c_j=\Re w_j + \dfrac{v}{|v|}\Im w_j$ where $w_j$ is the solution of $\sum\limits_{j\in \Z}B_z(i-j)w_j=\delta_{i,0}$, $i\in \Z$. Then, for $t\in \R$, 
\begin{align*}
L_q(t)&=\sum_{j\in \Z}B_q(t-j)c_j\\
&=\sum_{j\in \Z}\bigg[\Re(B_z(t-j))+\frac{v}{|v|}\Im (B_z(t-j))\bigg]\bigg[\Re (w_j)+\frac{v}{|v|}\Im (w_j)\bigg]\\
&=\sum_{j\in \Z}\bigg[\Re (B_z(t-j))\Re (w_j)-\Im (B_z(t-j))\Im (w_j)\\
& \quad +\frac{v}{|v|}[\Im (B_z(t-j))\Re (w_j)+\Re (B_z(t-j))\Im (w_j)]\bigg]\\
&=\sum_{j\in \Z}\Re (B_z(t-j)w_j)+\frac{v}{|v|}\sum_{j\in \Z}\Im (B_z(t-j)w_j)\\
&=\Re \left(\sum_{j\in \Z}B_z(t-j)w_j\right)+\frac{v}{|v|}\Im \left(\sum_{j\in \Z}B_z(t-j)w_j\right)\\
& =\Re L_z(t) + \frac{v}{|v|}\Im L_z(t).\qedhere
\end{align*}
\end{proof}
We provide now an alternative computation of the fundamental splines $L_q$ for which error estimates are available.  Starting with (\ref{c FS}) and applying the Poisson summation formula, we write
\begin{equation}
c_k^{(q)}=\frac{1}{2\pi}\int_0^{2\pi}\phi_q(\xi )e^{-ik\xi}\, d\xi =\int_0^{2\pi}\frac{e^{ik\xi}}{\sum\limits_j\widehat B_q(\xi+2\pi j)}\, d\xi .\label{c_k^q int}
\end{equation}
The sum in (\ref{c_k^q int}) is approximated by the truncation
$$F_q^M(\xi )=\sum_{k=-M}^M\widehat B_q(\xi +2\pi k)=\sum_{k=-M}^M\left(\frac{1-e^{-i\xi}}{i(\xi +2\pi k)}\right)^q$$
for suitably large $M > 0$. We have chosen $q$ so that the $2\pi$-periodic function $F_q(\xi )=\sum\limits_k\widehat B_q(\xi +2\pi k)$ has no zeroes and $M$ must be large enough so $F_q^M$ also has no zeroes on the real  line. Then
\begin{align*}
|F_q(\xi )-F_q^M(\xi )|&\leq |1-e^{-i\xi}|^a\sum_{|k|>M}\frac{1}{|\xi +2\pi k|^a}\\
&\leq\frac{2|1-e^{-i\xi}|^a}{(2\pi )^a}\sum_{k=M}^\infty k^{-a}\leq\frac{2}{\pi^a(a-1)M^{a-1}}
\end{align*}
and we conclude that 
\begin{equation}
\bigg|\frac{1}{F_q(\xi )}-\frac{1}{F_q^M(\xi )}\bigg|=\mathcal{O}(M^{1-a}) \quad\text{as $M\to \infty$}.\label{O(M)}
\end{equation}
The coefficients $c_k^{(q)}$, obtained via (\ref{c_k^q int}) are to be estimated from the discrete Fourier transform
$$c_{N,M,k}^{(q)}=\frac{2\pi}{N}\sum_{j=0}^{N-1}\frac{e^{-2\pi ijk/N}}{F_q^M(2\pi j/N)}$$
for $N$ sufficiently large.
We then have
\begin{align}
|c_k^{(q)}-c_{N,M,k}^{(q)}|&=\bigg|\int_0^{2\pi}\frac{e^{-ik\xi}}{F_q(\xi )}\, d\xi-\frac{2\pi}{N}\sum_{j=0}^{N-1}\frac{e^{-2\pi ijk/N}}{F_q^M(2\pi j/N)}\bigg|\notag\\
&\leq \bigg|\int_0^{2\pi}\frac{e^{-ik\xi}}{F_q(\xi )}\, d\xi-\frac{2\pi}{N}\sum_{j=0}^{N-1}\frac{e^{-2\pi ijk/N}}{F_q(2\pi j/N)}\bigg|\notag\\
&+\bigg|\frac{2\pi}{N}\sum_{j=0}^{N-1}\frac{e^{-2\pi ijk/N}}{F_q(2\pi j/N)}-\frac{2\pi}{N}\sum_{j=0}^{N-1}\frac{e^{-2\pi ijk/N}}{F_q^M(2\pi j/N)}\bigg| =: A+B.\label{A+B}
\end{align}
Quantity $B$ on the right hand side of (\ref{A+B}) is easily handled with an application of (\ref{O(M)}), which gives $B=\cO(M^{1-a})$, as $M\to\infty$. To estimate $A$, we employ the following simplification of a result due to Epstein \cite{ep}:

\begin{theorem} \label{epstein} Suppose $f$ is a $2\pi$-periodic function on the real line with $\ell$ continuous derivatives $(\ell \geq 1)$, $\widehat f(k)=\int\limits_0^{2\pi}f(\xi )e^{-ik\xi}\, d\xi$ and $\widehat f_{N,k}:=\frac{2\pi}{N}\sum\limits_{j=0}^{N-1}f(2\pi j/N)e^{-2\pi ijk/N}$ is the $N$-point Riemann sum approximation of the integral defining $\widehat f(k)$. Then for $|k|\leq N/2$,
$$|\widehat f(k)-\widehat f_{N,k}|\leq (2\pi )^2\left(\frac{12}{N}\right)^{\ell}\|f^{(\ell)}\|_\infty .$$
\end{theorem}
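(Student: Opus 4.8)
The plan is to reduce the quadrature error to an \emph{aliasing sum} of Fourier coefficients and then control the tail of that sum by a Cauchy--Schwarz argument rather than a naive term-by-term bound, the latter failing precisely in the borderline case $\ell=1$.

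First I would record the aliasing identity. Writing $\widehat{f}(n)=\int_0^{2\pi}f(\xi)e^{-in\xi}\,d\xi$, the hypothesis $f\in C^\ell$ with $\ell\geq 1$ guarantees (via $\widehat{f'}(n)=in\,\widehat{f}(n)$, Bessel's inequality, and Cauchy--Schwarz against $\sum n^{-2}$) that the Fourier series of $f$ converges absolutely. Substituting $f(\xi)=\frac{1}{2\pi}\sum_m \widehat{f}(m)e^{im\xi}$ into the definition of $\widehat{f}_{N,k}$, interchanging the absolutely convergent sum with the finite average over the nodes $2\pi j/N$, and using the orthogonality relation $\sum_{j=0}^{N-1}e^{2\pi i(m-k)j/N}=N$ when $m\equiv k \pmod N$ and $0$ otherwise, I obtain $\widehat{f}_{N,k}=\sum_{r\in\Z}\widehat{f}(k+rN)$. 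Isolating the $r=0$ term gives the error formula $\widehat{f}_{N,k}-\widehat{f}(k)=\sum_{r\neq 0}\widehat{f}(k+rN)$.

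Next I would exploit smoothness through integration by parts. Since $f,f',\dots,f^{(\ell)}$ are continuous and $2\pi$-periodic, all boundary terms vanish and $\ell$-fold integration by parts yields $\widehat{f}(n)=(in)^{-\ell}\,\widehat{f^{(\ell)}}(n)$ for every $n\neq 0$. Because $|k|\leq N/2$ and $r\neq 0$ force $k+rN\neq 0$, I may apply this to every term of the error sum and then split by Cauchy--Schwarz:
\[
\Big|\sum_{r\neq 0}\widehat{f}(k+rN)\Big|\leq\Big(\sum_{r\neq 0}|k+rN|^{-2\ell}\Big)^{1/2}\Big(\sum_{r\neq 0}|\widehat{f^{(\ell)}}(k+rN)|^2\Big)^{1/2}.
\]
The second factor is bounded, via Parseval, by the full $\ell^2$-norm of the coefficients of $f^{(\ell)}$, namely $\big(\sum_n|\widehat{f^{(\ell)}}(n)|^2\big)^{1/2}=\big(2\pi\int_0^{2\pi}|f^{(\ell)}(\xi)|^2\,d\xi\big)^{1/2}\leq 2\pi\|f^{(\ell)}\|_\infty$. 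For the first factor I would use the elementary bound $|k+rN|\geq(|r|-\tfrac12)N$, valid precisely because $|k|\leq N/2$, to get $\sum_{r\neq 0}|k+rN|^{-2\ell}\leq 2N^{-2\ell}\sum_{r\geq 1}(r-\tfrac12)^{-2\ell}\leq 2N^{-2\ell}\cdot 4^\ell\zeta(2\ell)$, and then $\zeta(2\ell)\leq\zeta(2)=\pi^2/6$.

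Combining the two factors produces a bound of the shape $C\,(2/N)^\ell\|f^{(\ell)}\|_\infty$ with an absolute constant $C$, which is comfortably dominated by the stated $(2\pi)^2(12/N)^\ell\|f^{(\ell)}\|_\infty$; the generous slack in the constants means no sharpness is required anywhere. The one genuinely delicate point --- and the reason I route the tail estimate through $\ell^2$ and Parseval instead of the cruder pointwise bound $|\widehat{f}(n)|\leq 2\pi\|f^{(\ell)}\|_\infty|n|^{-\ell}$ --- is the case $\ell=1$: there the decay $|n|^{-1}$ is not summable over the aliasing indices, so a term-by-term triangle inequality diverges, whereas the Cauchy--Schwarz pairing only needs $\sum_{r\neq 0}|k+rN|^{-2\ell}<\infty$, which already holds at $\ell=1$ since $2\ell\geq 2$.
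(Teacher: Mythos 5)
Your proposal is correct, but note that the paper itself offers no proof of this statement: Theorem \ref{epstein} is quoted as a ``simplification of a result due to Epstein'' with a citation to \cite{ep}, so there is nothing internal to compare against line by line. Your argument is the standard self-contained route and it checks out: the aliasing identity $\widehat f_{N,k}=\sum_{r\in\Z}\widehat f(k+rN)$ is legitimate here because $f\in C^1$ forces absolute convergence of the Fourier series (so the interchange of the sum with the finite node average is justified), periodicity kills all boundary terms in the $\ell$-fold integration by parts, and the condition $|k|\leq N/2$ gives both $k+rN\neq 0$ and the bound $|k+rN|\geq(|r|-\tfrac12)N$ needed for the tail. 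Your constants are right: the Parseval factor is at most $2\pi\|f^{(\ell)}\|_\infty$, the lattice factor is at most $\bigl(\tfrac{\pi^2}{3}\bigr)^{1/2}(2/N)^{\ell}$, and the product $\tfrac{2\pi^2}{\sqrt3}(2/N)^{\ell}\|f^{(\ell)}\|_\infty$ is strictly smaller than the stated $(2\pi)^2(12/N)^{\ell}\|f^{(\ell)}\|_\infty=4\pi^2 6^{\ell}(2/N)^{\ell}\|f^{(\ell)}\|_\infty$, so you in fact prove a sharper inequality. The point you flag as delicate is genuinely the crux: the theorem is applied in the paper with $\ell=1$ (see \eqref{order constants}), and a term-by-term triangle inequality on the aliases, using only $|\widehat f(n)|\lesssim |n|^{-1}$, diverges; routing the tail through Cauchy--Schwarz and Parseval is exactly what rescues that case. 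Thus your proof buys three things the paper's citation does not: self-containedness, an explicit and better constant, and visible coverage of the borderline smoothness actually used downstream.
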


It is easily checked that for quaternionic functions on the line of the form $f(\xi )=f_s(\xi )+\dfrac{v}{|v|}f_v(\xi )$ with $v\neq 0$ a fixed quaternionic vector and $f_s$, $f_v$ complex-valued, we have $\dfrac{d}{d\xi}(f(\xi)^{-1})=-\dfrac{f'(\xi)}{f(\xi )^{2}}$ at all $\xi $ for which $f(\xi )\neq 0$.  Applying Theorem \ref{epstein} to the estimate of $A$ in (\ref{A+B}) gives
\begin{equation}
A\leq (2\pi )^2\left(\frac{12}{N}\right)\frac{\|(F_q')_s\|_\infty +\|(F_q')_v\|_\infty}{\inf_\xi |F_q(\xi )|^2}=\mathcal{O}(N^{-1}) \quad\text{as $N\to\infty$}.\label{order constants}
\end{equation}
Returning now to (\ref{A+B}), we have
\begin{equation}
|c_k^{(q)}-c_{N,M,k}^{(q)}|=\mathcal{O}(M^{1-a}+N^{-1}) \quad \text{as $M,N\to\infty$}.\label{coefft est}
\end{equation}

It remains then to compute (numerically) estimates for the multiplicative constant $S_q=\dfrac{\|(F_q')_s\|_\infty +\|(F_q')_v\|_\infty}{\inf_\xi |F_q(\xi )|^2}$. By direct calculation, we have
$$F_q'(\xi )=\begin{cases}
-iq/2&\text{ if }\xi\in{\mathbb Z};\\
\dfrac{iq}{1-e^{-i\xi}}[F_{q+1}(\xi )-ie^{-i\xi}F_q(\xi )]&\text{ if }\xi\notin{\mathbb Z},
\end{cases}$$
and this is used (with $q_1=6.2+\frac{1}{2\sqrt 2}e_1-\frac{1}{4}e_2+\frac{1}{4}e_3$ and $q_2=2.5+\frac{1}{4\sqrt 2}e_1+\frac{1}{8}e_2-\frac{\sqrt {13}}{8}e_3$) in Figures \ref{fig6} and \ref{fig7} below. 

\begin{figure}[h!]
\begin{center}
\includegraphics[width=6cm, height= 5cm]{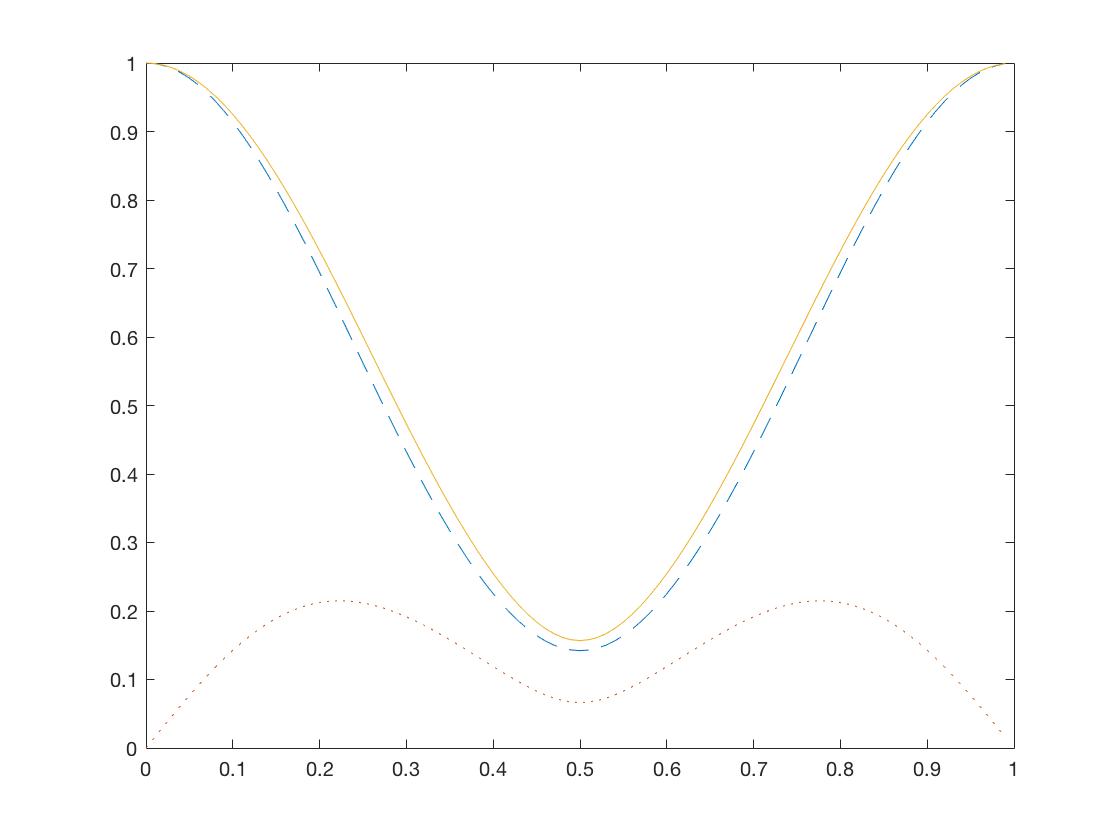}\hspace*{1cm}
\includegraphics[width=6cm, height= 5cm]{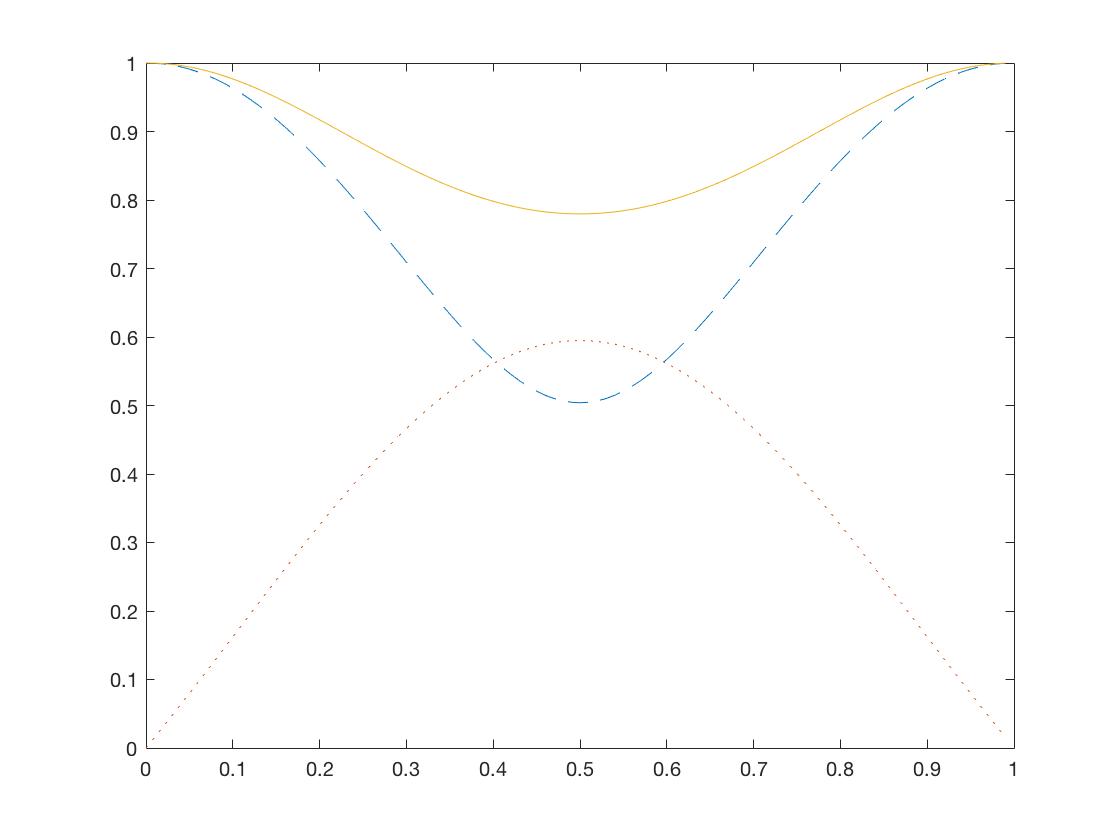}
\caption{Absolute values of the scalar parts (dashed), vector parts (dotted) of the functions  $F_{q_1}$ (left) and $F_{q_2}$ (right). The solid lines are graphs of $|F_{q_1}|$ (left) and $|F_{q_2}|$ (right). }\label{fig6}
\end{center}
\end{figure}

\begin{figure}[h!]
\begin{center}
\includegraphics[width=6cm, height= 5cm]{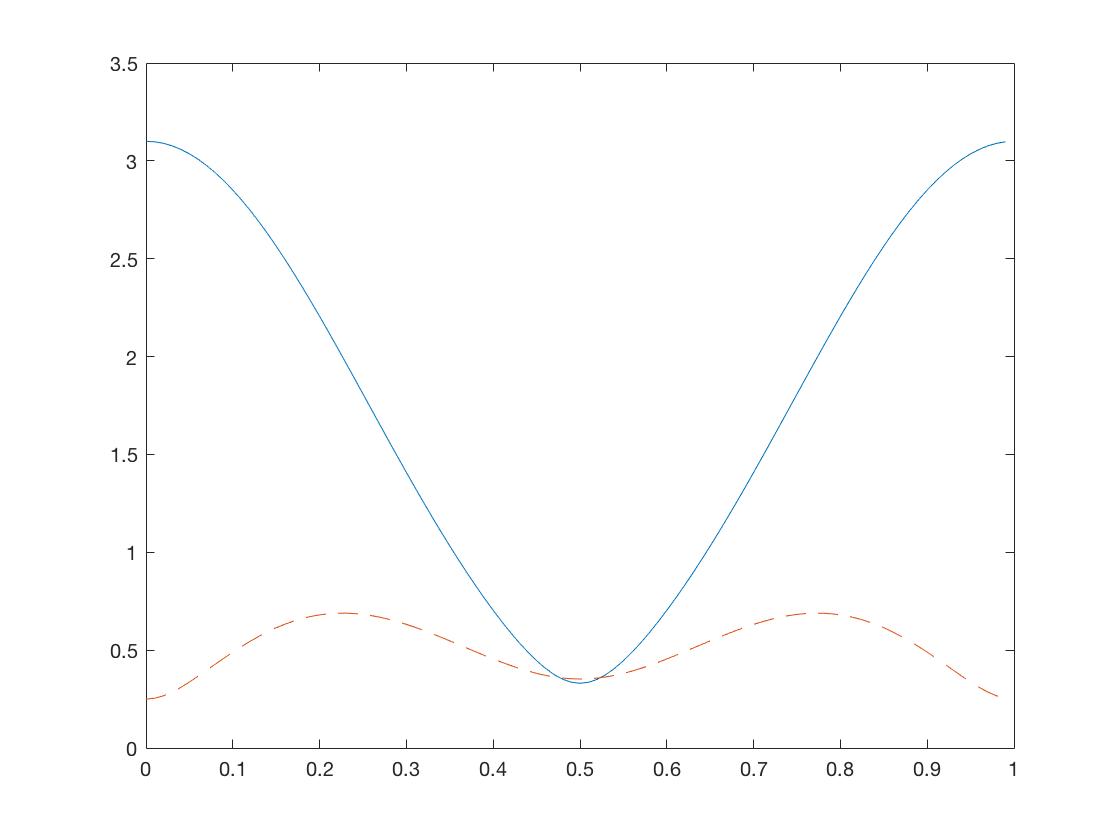}\hspace*{1cm}
\includegraphics[width=6cm, height= 5cm]{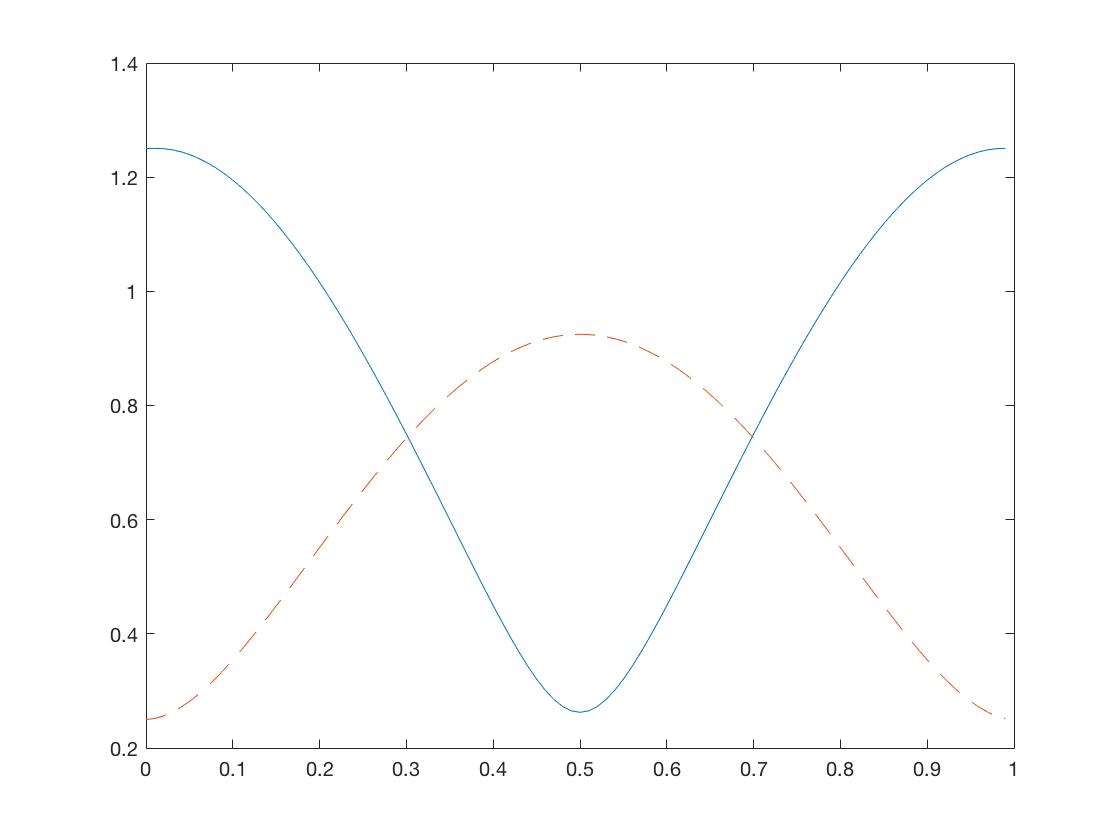}
\caption{Absolute values of the scalar parts (solid) and vector parts (dashed) of $F_{q_1}'$ (left) and $F_{q_2}'$ (right).}\label{fig7}
\end{center}
\end{figure}

These calculations provide numerical estimates of the constants of (\ref{order constants}). We find
$\min |F_{q_1}|\approx 0.1568$, $\min |F_{q_2}|\approx 0.7799$, $\|(F_{q_1})_s'\|_\infty +\|(F_{q_1})_v'\|_\infty\approx 3.7889$,  $\|(F_{q_2})_s'\|_\infty +\|(F_{q_2})_v'\|_\infty\approx 2.1753$.

In these calculations, we have applied Theorem \ref{epstein} with $\ell =1$, but the use of higher derivatives (to obtain faster decay of errors as $N$ increases) is possible, at the expense of larger multiplicative constants in the estimates. This will not be pursued here, but we note that given the calculations of $F_q$ and $F_q'$ already obtained, the second derivative may be obtained from
$$F_q''(\xi )=\begin{cases}
-\dfrac{q(q+1)}{4}&\text{ if $\xi =0$};\\
iq\dfrac{(1-e^{-i\xi})(F_{q+1}'(\xi )-ie^{-i\xi }F_q'(\xi ))+e^{-i\xi}(iF_{q+1}(\xi )-F_q(\xi ))}{(1-e^{-i\xi })^2}&\text{ if $\xi\neq 0$.}
\end{cases}$$
\section{A Sampling Theorem}
In this section, we derive a sampling theorem for functions in the principal shift-invariant space 
\begin{align*}
V_q&=\text{clos}_{\ell^2}\,\text{span}\{B_q(\cdot -n)\}_{n=-\infty}^\infty \\
&=\bigg\{f=\sum_{k}a_kB_q(\cdot -k):\, \{a_k\}_k\in\ell^2({\mathbb Z}),\ \sum_k|a_k|^2<\infty\bigg\}
\end{align*}
where $q\in Q_R$.
 For this purpose, we employ and generalize the following version of Kramer's lemma \cite{K} (which appears in \cite{garcia}) to the complex-quaternionic setting.

\begin{theorem}[\cite{garcia}, p. 501]\label{gensamp}
Let $\emptyset\neq I\subseteq\R$ and let $\{\phi_k: k\in\Z\}$ be an orthonormal basis for $L^2(I)$. Suppose that $\{S_k: k\in \Z\}$ is a sequence of functions $S_k: \Omega\to\C$ on a domain $\Omega\subset\R$ and $\boldsymbol{t} := \{t_k: k\in \Z\}$ a numerical sequence in $\Omega$ satisfying the conditions
\begin{enumerate}
\item[\emph{C1.}]	$S_k(t_l) = a_k \delta_{k,l}$ $(k,l)\in \Z\times \Z$, where $a_k\neq 0$;
\item[\emph{C2.}]	$\displaystyle{\sum_{k\in \Z}}\,\vert S_k(t)\vert^2 < \infty$, for each $t\in \Omega$.
\end{enumerate}
Define a function $K:I\times\Omega \to \C$ by
$K(x,t) := \sum_{k\in \Z} S_k (t) \overline{\phi_k} (x)$
and a linear integral transform $\mcK$ on $L^2 (I)$ by
\[
(\mcK F)(t) := \int_I F(x) K(x,t) \, dx.
\]
Then $\mcK$ is well-defined and injective. Furthermore, if the range of $\mcK$ is denoted by
\[
\mcH := \left\{f:\R\to\C : f = \mcK F, \ F\in L^2(I)\right\},
\]
then
\begin{enumerate}
\item[\emph{(i)}]	$(\mcH, \inn{\mydot}{\mydot}_\mcH)$ is a Hilbert space isometrically isomorphic to $L^2(I)$ ($\mcH \cong L^2(I)$) when endowed with the inner product
\[
\inn{f}{g}_\mcH := \inn{F}{G}_{L^2(I)}
\]
where $f := \mcK F$ and $g = \mcK G$.
\item[\emph{(ii)}] $\{S_k: k\in \Z\}$ is an orthonormal basis for $\mcH$.
\item[\emph{(iii)}] Each function $f\in \mcH$ can be recovered from its samples on the sequence $\boldsymbol{t}$ via the formula
\[
f(t) = \sum_{k\in \Z} f(t_k)\,\frac{S_k (t)}{a_k},
\]
the series converging absolutely and uniformly on subsets of $\R$ where $\|K(\mydot, t)\|_{L^2(I)}$ is bounded.
\end{enumerate}
\end{theorem}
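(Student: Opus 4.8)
The plan is to reduce everything to the single fact that $\mcK$ is the unitary extension of the assignment $\phi_k\mapsto S_k$, after which parts (i)--(iii) are formal consequences. First I would record that condition C2 makes $K(\mydot,t)=\sum_k S_k(t)\overline{\phi_k}$ a genuine element of $L^2(I)$ for each fixed $t\in\Omega$, with $\|K(\mydot,t)\|_{L^2(I)}^2=\sum_k|S_k(t)|^2$ by orthonormality of $\{\phi_k\}$. Interchanging the (absolutely convergent) sum with the integral then gives, for every $F\in L^2(I)$,
\[
(\mcK F)(t)=\int_I F(x)\,K(x,t)\,dx=\sum_{k\in\Z}\inn{F}{\phi_k}\,S_k(t),
\]
where the exchange and the absolute convergence are justified in one stroke by Cauchy--Schwarz, $\sum_k|\inn{F}{\phi_k}|\,|S_k(t)|\le\|F\|_{L^2(I)}\,\|K(\mydot,t)\|_{L^2(I)}$, together with Parseval's identity. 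Taking $F=\phi_k$ in this identity yields the basic observation $\mcK\phi_k=S_k$.

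Next I would establish injectivity, which is the one place where C1 enters. Evaluating the displayed series at the sampling nodes and using $S_k(t_l)=a_k\delta_{k,l}$ collapses it to $(\mcK F)(t_l)=a_l\,\inn{F}{\phi_l}$; since $a_l\neq 0$, the equation $\mcK F=0$ forces $\inn{F}{\phi_l}=0$ for all $l$, hence $F=0$ by completeness of $\{\phi_k\}$. Thus $\mcK\colon L^2(I)\to\mcH$ is a bijection onto its range by the definition of $\mcH$, and setting $\inn{f}{g}_{\mcH}:=\inn{F}{G}_{L^2(I)}$ for $f=\mcK F$, $g=\mcK G$ is unambiguous. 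With this inner product $\mcK$ is by construction an isometric isomorphism, so $\mcH$ is a Hilbert space isometrically isomorphic to $L^2(I)$, which is (i); and since $\{\phi_k\}$ is an orthonormal basis of $L^2(I)$ and $\mcK\phi_k=S_k$, the image $\{S_k\}$ is an orthonormal basis of $\mcH$, which is (ii).

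Finally, for (iii) I would expand an arbitrary $f=\mcK F\in\mcH$ as $F=\sum_k\inn{F}{\phi_k}\phi_k$ and apply $\mcK$ to obtain $f=\sum_k\inn{F}{\phi_k}\,S_k$. Sampling at $t_l$ identifies the coefficients, $\inn{F}{\phi_l}=f(t_l)/a_l$, giving the reconstruction formula $f(t)=\sum_k f(t_k)\,S_k(t)/a_k$. The convergence claim is then a tail estimate of the same Cauchy--Schwarz type: for the partial sums
\[
\sum_{|k|>N}\frac{|f(t_k)|}{|a_k|}\,|S_k(t)|=\sum_{|k|>N}|\inn{F}{\phi_k}|\,|S_k(t)|\le\Big(\sum_{|k|>N}|\inn{F}{\phi_k}|^2\Big)^{1/2}\|K(\mydot,t)\|_{L^2(I)},
\]
so on any subset of $\R$ where $\|K(\mydot,t)\|_{L^2(I)}$ is bounded, the Bessel tail $\big(\sum_{|k|>N}|\inn{F}{\phi_k}|^2\big)^{1/2}\to 0$ forces absolute and uniform convergence. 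I do not anticipate a genuine obstacle: the whole argument is driven by the single estimate $\|K(\mydot,t)\|_{L^2(I)}^2=\sum_k|S_k(t)|^2$ supplied by C2, and the only point requiring real care is the well-definedness of the transported inner product in (i), which rests squarely on the injectivity furnished by C1.
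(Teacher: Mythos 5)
Your proof is correct, but note that the paper itself offers no proof of this statement: Theorem \ref{gensamp} is quoted from \cite{garcia}, and the authors explicitly defer its proof to that reference. The relevant comparison is therefore with the proof the paper does supply for its noncommutative generalization, Theorem \ref{qsampthm} (the $\C_n$-valued sampling theorem), and your argument is essentially the scalar specialization of that one: both hinge on the identity $\|K(\mydot,t)\|_{L^2(I)}^2=\sum_k|S_k(t)|^2$ furnished by C2, both prove injectivity by evaluating the series for $\mcK F$ at the nodes $t_k$ (the only place C1 enters), and both transport the $L^2(I)$ inner product to the range so that $\mcK$ becomes an isometric isomorphism carrying the orthonormal basis $\{\phi_k\}$ to $\{S_k\}$, which settles (i) and (ii). The genuine difference is the endgame for (iii): in the Hilbert-module setting the paper first proves a Riesz representation theorem, uses it to realize point evaluations by a reproducing kernel, and then extracts the sampling series from the identity $\inn{f}{S_n}_{\mcH}=f(t_n)$; you bypass all reproducing-kernel machinery by expanding $F=\sum_k\inn{F}{\phi_k}\phi_k$, applying $\mcK$, and reading off the coefficients $\inn{F}{\phi_k}=f(t_k)/a_k$ directly from C1. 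This is cleaner, suffices in the commutative scalar case, and also accommodates the general normalization $a_k\neq 0$ (the paper's Theorem \ref{qsampthm} assumes $a_k=1$). Your Cauchy--Schwarz/Bessel tail estimate for the absolute and uniform convergence on sets where $\|K(\mydot,t)\|_{L^2(I)}$ is bounded is likewise correct and is the standard argument of the cited source; I see no gap.
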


For the proof and further details, we refer the reader to \cite{garcia}.

To carry out the extension to complex quaternions, let $\C_n$ be the complexified Clifford algebra
\[
\C_n :=\{z_Ae_A : z_A\in\C,\, A\subseteq\{1,2,\dots ,n\}\}.
\]
Here, if $A=\{i_1,i_2,\dots ,i_k\}$ with $1\leq i_1<i_2<\cdots <i_k$ then $e_A :=e_{i_1}e_{i_2}\cdots e_{i_k}$ where $e_j^2=-1$ and $e_je_k=-e_ke_j$ for $j\neq k$. We also insist that $e_\emptyset :=1$.

We define an involution on $\C_n$ as the $\C$-conjugate linear mapping $*:\C_n\to\C_n$ by
\begin{equation}
\bigg(\sum_Az_Ae_A\bigg)^* := \sum_A\overline{z_A}\,\overline{e_A}\label{star def},
\end{equation}
where $\overline{z_A}$ is the complex conjugate of the complex number $z_A$ and $\overline{e_A}$ is the Clifford conjugate of the Clifford algebra basis element $e_A$, determined by $\overline{e_j}=-e_j$ $(1\leq j\leq n)$, $\overline{e_0}=e_0$, and $\overline{e_Ae_B}=\overline{e_B}\,\overline{e_A}$. The expression $\sum\limits_{A}$ denotes the sum over all subsets $A\subseteq\{1,2,\dots ,n\}$. 
Note that if $z,w\in\C_n$ then 
$$(zw)^*=w^*z^*.$$
For $z=\sum\limits_Az_Ae_A\in\C_n$, we define $|z|^2=\sum\limits_A|z_A|^2$. Note that $|z|^2=[zz^*]_0=[z^*z]_0=|z^*|^2$ for all $z\in\C_n$.

Let ${\mathcal H}$ be a set on which there are two operations defined:
\begin{itemize}
\item{\it addition}: $+:\cH\times\cH\to \cH$; and
\item{\it scalar multiplication}: $\cdot :\C_n\times\cH\to\cH$. 
\end{itemize}
The triple $(\cH ,+,\cdot )$ is called a  left module over $\C_n$ if  $(\cH ,+)$ is an abelian group and, for all $\lambda ,\mu\in\C_n$ and $x,y\in\cH$, the following conditions are satisfied:
\begin{enumerate}
\item[1.] $\lambda\cdot (x+y)=\lambda\cdot x+\lambda\cdot y$; 
\item[2.] $(\lambda +\mu )\cdot x=\lambda\cdot x+\mu\cdot x$; 
\item[3.] $(\lambda\mu )\cdot x=\lambda\cdot (\mu\cdot x)$; 
\item[4.] $1\cdot x=x$.
\end{enumerate}

\noindent Let $(\cH,+,\cdot )$ be a left module over $\C_n$. Given a mapping $\inn{\mydot}{\mydot}:\cH\times\cH\to\C_n$, we say $(\cH,+,\cdot ,\inn{\mydot}{\mydot} )$ is a left Hilbert module over $\C_n$ if, for all $x,y,z\in\cH$ and $\lambda,\mu\in\C_n$, the following requirements hold:
\begin{enumerate}
\item[5.]  $\langle x+y,z\rangle =\langle x,y\rangle +\langle y,z\rangle$;
\item[6.]  $\langle x,y+z\rangle =\langle x,y\rangle +\langle x,z\rangle$;
\item[7.] $\langle\lambda x,\mu y\rangle=\lambda\langle x,y\rangle\mu^*$;
\item[8.] $\langle y,x\rangle =\langle x,y\rangle^*$;
\item[9.] $\langle x,x\rangle =0\iff x=0$;
\item[10.] $\|x\| := [\langle x,x\rangle ]_0$ defines a pseudo-norm on ${\mathcal H}$ in the sense that
\begin{enumerate}
\item[(a)] $\|x\|\geq 0$ and ($\|x\|=0\iff x=0$);
\item[(b)] $\|x+y\|\leq \|x\|+\|y\|$;
\item[(c)] there exists a constant $C>0$ such that $\|\lambda x\|\leq C\,|\lambda|\,\|x\|$.
\end{enumerate}
\end{enumerate}

\nl

Now suppose that $X$ is a measure space with positive measure $dx$ and
$$L^2(X,\C_n) := \left\{f=\sum_Af_Ae_A : f_A:X\to{\mathbb C}\text{ measurable  and }\|f\|_2^2=\sum_A\|f_A\|_2^2<\infty\right\}.$$
A pairing $\inn{\mydot}{\mydot} :L^2(X,\C_n)\times L^2(X,\C_n)\to\C_n$ is given by
\[
\inn{f}{g} :=\int_Xf(x)g(x)^*\, dx.
\]
With this definition, $L^2(X,\C_n )$ becomes a left Hilbert module over $\C_n$. Note that if $f\in L^2(X,\C_n)$, then
$\left[\int f(x)f(x)^*\, dx\right]_0=\int_X|f(x)|^2\, dx$.
\nl

Let $\cH$ be a left Hilbert module over $\C_n$. A mapping $T:{\mathcal H}\to\C_n$ is called a bounded left linear functional if, for all $f,g\in\cH$ and $\lambda\in\C_n$,
\begin{enumerate}
\item[(i)] $T(f+g)=Tf+Tg$;
\item[(ii)] $T(\lambda f)=\lambda (Tf)$;
\item[(iii)] there exists a constant $M>0$ such that $|Tf|\leq M\|f\|$.
\end{enumerate}
We call $\|T\| :=\inf\{M>0 : \text{ property (iii) is satisfied}\}$ the norm of the functional $T$. 

The next theorem is a generalization of the Riesz Representation Theorem to the complex quaternionic setting.

\begin{theorem}[Riesz Representation Theorem] Let $T$ be a bounded left linear functional on a left Hilbert module ${\mathcal H}$ over $\C_n$ for which there exists an orthonormal basis $\{\varphi_n\}_{n=1}^\infty$, i.e., $\{\varphi_n\}_{n=1}^\infty\subset{\mathcal H}$ and for all $f\in{\mathcal H}$, we have
\begin{enumerate}
\item[\emph{(i)}] $\langle\varphi_n,\varphi_m\rangle =\delta_{n,m}$ for all $m, n \in \N$;
\item[\emph{(ii)}] $\sum_{n=1}^\infty |\inn{f}{\varphi_n} |^2 = \|f\|^2$; 
\item[\emph{(iii)}] $f=\sum\limits_{n=1}^\infty \langle f,\varphi_n\rangle\varphi_n.$
\end{enumerate}
Then there exists a unique $g\in {\mathcal H}$ such that  $Tf=\inn{f}{g}$, for all $f\in {\mathcal H}$.
\end{theorem}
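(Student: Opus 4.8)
The plan is to follow the classical Hilbert-space argument, reconstructing the representer $g$ from the coefficients $T\varphi_n$, but taking care of the noncommutativity of $\C_n$ and of the fact that the pseudo-norm is extracted as the scalar part $[\,\cdot\,]_0$ of the inner product. Motivated by the target identity $Tf=\langle f,g\rangle$ together with the expansion hypothesis (iii), I would introduce the candidate
\[
g := \sum_{n=1}^\infty (T\varphi_n)^\ast\,\varphi_n,
\]
and the first substantive task is to show that this series actually defines an element of $\mathcal H$.

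The main obstacle is precisely this convergence, which rests on a Bessel-type bound $\sum_n |T\varphi_n|^2<\infty$. To obtain it, I would fix $N$ and test $T$ against the finite partial sum $f_N := \sum_{n=1}^N (T\varphi_n)^\ast \varphi_n$. Using property~7 with $\mu=1$ and orthonormality (i), one computes $\langle f_N,\varphi_m\rangle=(T\varphi_m)^\ast$ for $m\le N$ and $0$ otherwise, so that Parseval (ii) gives $\|f_N\|^2=\sum_{n=1}^N |T\varphi_n|^2$. On the other hand, left linearity of $T$ yields $Tf_N=\sum_{n=1}^N (T\varphi_n)^\ast (T\varphi_n)$, whose scalar part is $\sum_{n=1}^N [\,(T\varphi_n)^\ast (T\varphi_n)\,]_0=\sum_{n=1}^N |T\varphi_n|^2$ by the identity $[z^\ast z]_0=|z|^2$. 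Since $|w|\ge |[w]_0|$ for every $w\in\C_n$ and the scalar part here is real and nonnegative, boundedness (iii) forces
\[
\sum_{n=1}^N |T\varphi_n|^2 \;\le\; |Tf_N| \;\le\; M\,\|f_N\| \;=\; M\Big(\sum_{n=1}^N |T\varphi_n|^2\Big)^{1/2},
\]
whence $\big(\sum_{n=1}^N |T\varphi_n|^2\big)^{1/2}\le M$ uniformly in $N$, and therefore $\sum_n |T\varphi_n|^2\le M^2<\infty$. Because $|(T\varphi_n)^\ast|=|T\varphi_n|$, the coefficients of $g$ are square-summable, so the partial sums of $g$ are Cauchy (by orthonormality), and completeness of the Hilbert module delivers $g\in\mathcal H$.

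For the verification, I would take an arbitrary $f\in\mathcal H$, expand $f=\sum_m \langle f,\varphi_m\rangle\varphi_m$ via (iii), and pass $T$ through the limit using boundedness (iii), obtaining $Tf=\sum_m \langle f,\varphi_m\rangle\, T\varphi_m$. Computing $\langle f,g\rangle$ by property~7 (with $\lambda=\langle f,\varphi_m\rangle$ and $\mu=(T\varphi_n)^\ast$, so that $\mu^\ast=T\varphi_n$), orthonormality (i), and the involution identity $((T\varphi_n)^\ast)^\ast=T\varphi_n$, the double sum collapses to $\langle f,g\rangle=\sum_m \langle f,\varphi_m\rangle\, T\varphi_m=Tf$; the interchange of the inner product with the two infinite expansions is justified by a Cauchy--Schwarz estimate in the module together with the square-summability just established. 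Finally, uniqueness is immediate: if $g_1,g_2$ both represent $T$, then $\langle f,g_1-g_2\rangle=0$ for every $f$, and choosing $f=g_1-g_2$ and invoking property~9 gives $g_1=g_2$.
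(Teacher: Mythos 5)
Your proposal is correct and follows essentially the same route as the paper's own proof: the same candidate representer $g=\sum_n (T\varphi_n)^\ast\varphi_n$, the same Bessel-type bound obtained by applying $T$ to the partial sums with coefficients $(T\varphi_n)^\ast$ and using $[z^\ast z]_0=|z|^2$ together with boundedness of $T$, the same collapse of the double sum via property~7 and orthonormality, and the same uniqueness argument via property~9. The only (harmless) difference is that you explicitly flag the completeness needed for the series defining $g$ to converge and the Cauchy--Schwarz justification for interchanging sums, points the paper passes over silently.
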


\begin{proof} Let $\{\varphi_n\}_{n=1}^\infty$ be an orthonormal basis for ${\mathcal H}$.
Let $a_j :=T\varphi_j\in\C_n$, and given $f\in{\mathcal H}$, let $c_j :=\langle f,\varphi_j\rangle\in\C_n$ and $f_n :=\sum\limits_{j=1}^nc_j\varphi_j$. Then $\|f-f_n\|_2\to 0$ as $n\to\infty$ and since $T$ is left linear, $Tf_n=\sum\limits_{j=1}^nc_jT\varphi_j=\sum\limits_{j=1}^nc_ja_j$. Also, again by the linearity and boundedness of $T$,
\[
|Tf-Tf_n|=|T(f-f_n)|\leq\|T\|\|f-f_n\|\to 0\text{ as }n\to\infty.
\]
Consequently, we have that $Tf=\sum\limits_{j=1}^\infty c_ja_j$. Furthermore,
\begin{equation}
\left|\left(\sum_{j=1}^n c_ja_j\right)_0\right|\leq\bigg|\sum_{j=1}^nc_ja_j\bigg|=|Tf_n|\leq\|T\|\|f_n\|_2=\|T\| \left(\sum_{j=1}^n |c_j|^2\right)^{1/2}.\label{ac_ineq}
\end{equation}
Putting $c_j=a_j^*$ in (\ref{ac_ineq}) yields
\[
\sum_{j=1}^n|a_j|^2\leq\|T\|\left(\sum_{j=1}^n|a_j|^2\right)^{1/2},
\]
from which we obtain the uniform bound $\bigg(\sum\limits_{j=1}^n|a_j|^2\bigg)^{1/2}\leq \|T\|$. 
Now let $g :=\sum\limits_{j=1}^\infty a_j^*\varphi_j\in{\mathcal H}$. Then 
\[
\langle f,g\rangle=\bigg\langle\sum\limits_{j=1}^\infty c_j\varphi_j,\sum\limits_{\ell=1}^\infty a_\ell^*\varphi_\ell\bigg\rangle=\sum\limits_{j=1}^\infty \sum\limits_{\ell=1}^\infty c_j\langle\varphi_j,\varphi_\ell\rangle a_\ell=\sum\limits_{j=1}^\infty c_ja_j=Tf,
\]
as required. 

To establish uniqueness, suppose that $h$ is another such element of ${\mathcal H}$ for which $Tf=\langle f,h\rangle$, for all $f\in{\mathcal H}$. Then we have
\[
\langle g-h,g-h\rangle = \langle g-h,g\rangle -\langle g-h,h\rangle =T(g-h)-T(g-h)=0,
\]
so that $\|g-h\|^2=[\langle g-h,g-h\rangle ]_0=0$, i.e., $g=h$.
\end{proof}
\nl
The theorem below is the extension of Theorem \ref{gensamp} to the current setting.

\begin{theorem}[Sampling Theorem for $\C_n$-valued Functions]\label{qsampthm}
Let $T$ be a measure space with positive measue $dt$, $S_n:T\to\C_n$ $(n\geq 1)$ a sequence of $\C_n$-valued functions, and $\{t_k\}_{k=1}^\infty\subset T$ such that 
\begin{enumerate}
\item[\emph{(1)}] $S_n(t_k)=\delta_{n,k}$ $(n,k\geq 1)$;
\item[\emph{(2)}] $\sum\limits_{n=1}^\infty |S_n(t)|^2\leq M<\infty$, for all $t\in T$.
\end{enumerate}
Let ${\mathcal H} :=\clos_{\ell^2}\, \Span \{S_n\}=\left\{f=\sum\limits_{n=1}^\infty a_nS_n : a_n\in\C_n\text{ and }\sum\limits_{n=1}^\infty |a_n|^2<\infty\right\}$. Then for all $f\in{\mathcal H}$,
\[
f(t)=\sum_{n=1}^\infty f(t_n)S_n(t)
\]
with convergence in the norm on ${\mathcal H}$.
\end{theorem}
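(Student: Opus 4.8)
The plan is to mirror the proof of the classical Kramer sampling theorem (Theorem \ref{gensamp}), but replace every appeal to the scalar Hilbert-space structure with the left-Hilbert-module machinery over $\C_n$ just established, in particular the Riesz Representation Theorem proved immediately above. The key structural fact I would exploit is that conditions (1) and (2) say exactly that $\{S_n\}$ is an orthonormal-type system for which the reproducing/sampling identity can be read off directly: evaluation at the sample point $t_k$ picks out the $k$-th coefficient.

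Let me plan out the proof.

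\begin{proof}[Proof sketch]
First I would verify that the sampling series on the right converges in the $\cH$-norm and that evaluation at a sample point is a bounded left linear functional. Fix $f = \sum_{n=1}^\infty a_n S_n \in \cH$. By hypothesis (2) and the Cauchy--Schwarz inequality in $\C_n$ (using $|z|^2 = [zz^*]_0$ and the pseudo-norm axioms 10(a)--(c)), for each fixed $t\in T$ the pointwise evaluation map $f\mapsto f(t) = \sum_n a_n S_n(t)$ satisfies
\[
|f(t)| \;=\; \Bigl|\sum_{n=1}^\infty a_n S_n(t)\Bigr| \;\leq\; \Bigl(\sum_{n=1}^\infty |a_n|^2\Bigr)^{1/2}\Bigl(\sum_{n=1}^\infty |S_n(t)|^2\Bigr)^{1/2} \;\leq\; M^{1/2}\,\|f\|,
\]
so $E_t\colon f\mapsto f(t)$ is a bounded left linear functional on $\cH$ with $\|E_t\|\leq M^{1/2}$. (Left linearity holds because the coefficients $a_n$ multiply on the left and the $S_n(t)$ are the fixed data.)

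Next I would compute the samples $f(t_k)$ and recognize $\{S_n\}$ as an orthonormal basis for $\cH$. By construction $\cH = \clos_{\ell^2}\Span\{S_n\}$, so every $f\in\cH$ has the stated expansion $f=\sum_n a_n S_n$ with $a_n\in\C_n$ and $\sum_n|a_n|^2<\infty$; the $\{S_n\}$ form an orthonormal basis for $\cH$ precisely in the sense demanded by the hypotheses of the Riesz Representation Theorem (axioms (i)--(iii) there). Applying hypothesis (1),
\[
f(t_k) \;=\; \sum_{n=1}^\infty a_n S_n(t_k) \;=\; \sum_{n=1}^\infty a_n\,\delta_{n,k} \;=\; a_k,
\]
so the coefficients are literally the samples. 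Substituting back gives, formally,
\[
f(t) \;=\; \sum_{n=1}^\infty a_n S_n(t) \;=\; \sum_{n=1}^\infty f(t_n)\,S_n(t),
\]
which is the desired reconstruction formula.

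Finally I would upgrade the formal identity to genuine $\cH$-norm convergence. Since $\{S_n\}$ is an orthonormal basis of $\cH$, the partial sums $f_N := \sum_{n=1}^N f(t_n)S_n = \sum_{n=1}^N a_n S_n$ satisfy $\|f - f_N\|^2 = \sum_{n>N}|a_n|^2 \to 0$ as $N\to\infty$, using the Parseval-type identity (module axiom-level analogue of property (ii) in the Riesz theorem) together with the pseudo-norm axioms 10(a)--(b). \textbf{The main obstacle} is exactly this last step: one must confirm that the orthonormality relations $\langle S_n, S_m\rangle = \delta_{n,m}$ (and the consequent Parseval identity) hold in the $\C_n$-valued inner product, since conditions (1) and (2) are stated only pointwise and as an $\ell^2$-summability bound, \emph{not} directly as inner-product orthonormality. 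I would resolve this by noting that $\cH$ is defined as the $\ell^2$-closure of the span with the $\C_n$-valued pairing inherited from the construction $\inn{f}{g}=\sum_n a_n b_n^*$ (for $f=\sum a_nS_n$, $g=\sum b_nS_n$), under which the $\{S_n\}$ are orthonormal by fiat; the noncommutativity of $\C_n$ causes no difficulty because the coefficients and their conjugates always appear in the order dictated by module axiom 7, $\langle\lambda x,\mu y\rangle = \lambda\langle x,y\rangle\mu^*$. Once orthonormality in $\cH$ is secured, the Bessel/Parseval bound and hence norm convergence follow as in the classical argument.
\end{proof}
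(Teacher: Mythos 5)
Your proof is essentially correct, but it takes a genuinely different and more elementary route than the paper. The paper follows the Kramer-lemma template: it picks an auxiliary measure space $X$ with an orthonormal basis $\{\phi_n\}$ of $L^2(X,\C_n)$, forms the kernel $K(x,t)=\sum_n \phi_n(x)^*S_n(t)$ and the integral transform $\mathcal{K}F(t)=\int_X F(x)K(x,t)\,dx$, defines $\mathcal{H}:=\mathrm{Ran}(\mathcal{K})$ with the pairing pulled back from $L^2(X,\C_n)$, verifies all the left-Hilbert-module axioms for this pairing, proves $\mathrm{Ran}(\mathcal{K})=\clos_{\ell^2}\Span\{S_n\}$ and that $\{S_n\}=\{\mathcal{K}\phi_n\}$ is orthonormal, then invokes its Riesz Representation Theorem to produce a reproducing kernel and finally reads off $\langle f,S_n\rangle_{\mathcal{H}}=\int_X F(x)K(x,t_n)\,dx=f(t_n)$ from the identity $K(x,t_n)=\phi_n(x)^*$. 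You bypass all of this by working directly in the coefficient space: condition (2) plus Cauchy--Schwarz gives bounded pointwise evaluation, condition (1) forces the coefficients to equal the samples, and Parseval in the $\ell^2$-structure gives norm convergence. Your approach is shorter and avoids both the integral transform and the Riesz theorem; what it loses is exactly what the paper's detour buys, namely the reproducing-kernel structure and the isometry $\mathcal{H}\cong L^2(X,\C_n)$, which the paper reuses in the proof of its B-spline sampling theorem (Theorem \ref{S Abtastsatz}), where $X=[-\pi,\pi]$, $\phi_n=e^{inx}/\sqrt{2\pi}$, and the identity $\widehat{\mathcal{K}F}=F\widehat{L_q}$ drive the norm-equivalence argument. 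Two small points to tighten: (a) in $\C_n$ the product is only submultiplicative up to a constant, $|zw|\leq C|z|\,|w|$, so your evaluation bound should read $|f(t)|\leq C M^{1/2}\|f\|$ (the paper carries this $C$ in its estimate \eqref{eval}); (b) the pairing $\langle f,g\rangle=\sum_n a_n b_n^*$ is well defined only because the coefficients of $f$ are uniquely determined by $f$ as a function --- this is not ``by fiat'' but is precisely your own computation $f(t_k)=a_k$, which uses (1) together with the absolute pointwise convergence furnished by (2); you should state that this injectivity is what legitimizes the definition, rather than presenting orthonormality as a convention.
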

\begin{proof} Let $(X, dx)$ be a measure space and let $\{\phi_n\}_{n=1}^\infty$ be an orthonormal basis for $L^2(X,\C_n )$ in the sense that 
\begin{enumerate}
\item $\int\limits_X\phi_n(x)\phi_m(x)^*\, dx=\delta_{nm}$, for all $n,m\in\N$;
\item $\sum\limits_{n=1}^\infty \left|\int\limits_X F(x)\phi_n(x)^*\, dx\right|^2=\int\limits_X|F(x)|^2\, dx$, for all $F\in L^2(X,\C_n)$;
\item $\sum\limits_{n=1}^\infty \left(\int\limits_X F(x)\phi_n(x)^*\, dx\right)\phi_n=F$, for all $F\in L^2(X,\C_n)$.
\end{enumerate}
Let $K:X\times T\to\C_n$ be given by $K(x,t) :=\sum\limits_{n\in \N}\phi_n(x)^*S_n(t)$ and define an integral operator ${\mathcal K}$ by 
\[
{\mathcal K}F(t) :=\int\limits_XF(x)K(x,t)\, dx,\qquad F\in L^2(X,\C_n).
\]
Note that 
\begin{align*}
\int_X|K(x,t)|^2\, dx&=\int_X\bigg|\sum\limits_{n\in \N}\phi_n(x)^*S_n(t)\bigg|^2\, dx\\
&=\bigg[\int_X\bigg(\sum\limits_{n\in \N}\phi_n(x)^*S_n(t)\bigg)^*\bigg(\sum\limits_{m\in \N}\phi_m(x)S_m(t)\bigg)\, dx\bigg]_0\\
&=\sum\limits_{n,m\in \N}\bigg[S_n^*(t)\int_X\phi_n^*(x)\phi_m(x)\, dx \,S_m(t)\bigg]_0\\
&=\sum\limits_{n\in \N}\left[S_n(t)^*S_n(t)\right]_0=\sum\limits_{m\in \N} |S_n(t)|^2\leq M<\infty.
\end{align*}
If $f={\mathcal K}F$, for some $F\in L^2(X,\C_n)$, then 
\begin{align}
|f(t)|&=\bigg|\int_XF(x)K(x,t)\, dx\bigg|\notag\\
&\leq C\int_X|F(x)||K(x,t)|\, dx\notag\\
&\leq C\left(\int_X|F(x)|^2\, dx\right)^{1/2} \left(\int_X|K(x,t)|^2\, dx\right)^{1/2}\leq C\sqrt M\|F\|_{L^2(X)}\label{eval}.
\end{align}
Hence, the mapping ${\mathcal K}:F\to f$ is well-defined. ${\mathcal K}$ is also one-to-one since if ${\mathcal K}F(t)=0$ for all $t\in T$, 
 then
 \[
 {\mathcal K}F(t_k)=\int_XF(x)K(x,t_k)\, dx=\int_XF(x)\phi_k(x)^*\, dx=0\quad\text {for all } k\in \N,
 \]
 which implies that $F\equiv 0$. 
 
 Now, let ${\mathcal H} :=\text{Ran}({\mathcal K})$ and define a pairing $\inn{\mydot}{\mydot}_{\mathcal H}$ on ${\mathcal H}$ by
\[
\inn{f}{g}_{\mathcal H} :=\int_XF(x)G(x)^*\, dx\in\C_n,
\]
where $F,G\in L^2(X,\C_n)$ are the unique elements for which ${\mathcal K}F=f$ and ${\mathcal K}G=g$. 
We claim that with this pairing ${\mathcal H}$ becomes a left Hilbert module over $\C_n$. To prove this, we need to verify that axioms 1--9 are satisfied. First, notice that the addition and scalar multiplication in ${\mathcal H}$ are defined in the usual way: linearity of the operator ${\mathcal K}$ impies that if ${\mathcal K}F=f$ and ${\mathcal K}G=g$, then $\lambda\cdot f={\mathcal K}(\lambda\cdot F)$ and ${\mathcal K}(F+G)=f+g$. Thus, 
\[
(\lambda\cdot f)(t)=\int_X(\lambda\cdot F(x))K(x,t)\,dx=\lambda\int_XF(x)K(x,t)\, dx=\lambda (f(t))
\]
and
 \begin{align*}
 (f+g)(t)&=\int_X(F+G)(x)K(x,t)\, dx=\int_X[F(x)+G(x)]K(x,t)\, dx\\
 &=\int_XF(x)K(x,t)\, dx+\int_XG(x)K(x,t)\, dx=f(t)+g(t).
 \end{align*}
 Axioms 1--4 follow immediately. Note that if $f,g$ are as above and $h={\mathcal K}H$ then 
 \begin{align*}
 \langle f+g,h\rangle_{\mathcal H}&=\int_X(F+G)(x)H(x)^*\, dx=\int_XF(x)H(x)^*\, dx+\int_XG(x)H(x)^*\, dx\\
 &=\langle f,g\rangle_{\mathcal H}+\langle g,h\rangle_{\mathcal H},
 \end{align*}
and hence axiom 5 is satisfied. Axiom 6 follows similarly. Next, observe that if $f,g$ are as above and $\lambda ,\mu\in\C_n$, then 
\[
\langle\lambda f,\mu g\rangle_{\mathcal H}=\int_X(\lambda F)(x)(\mu G)(x)^*\, dx=\lambda\int_XF(x)G(x)^*\, dx\ \mu^*=\lambda\langle f,g\rangle_{\mathcal H}\ \mu^*,
\]
thus verifying axiom 7. For axiom 8, note that 
\[
\langle g,f\rangle_{\mathcal H}=\int_XG(x)F(x)^*\, dx=\left(\int_XF(x)G(x)^*\, dx\right)^*=\langle f,g\rangle_{\mathcal H}^*.
\]
Suppose now that $\langle f,f\rangle_{\mathcal H}=0$. Then 
\[
\int_X|F(x)|^2\, dx=\bigg[\int_XF(x)F(x)^*\, dx\bigg]_0=[\langle f,f\rangle_{\mathcal H}]_0=0,
\]
so that $F\equiv 0$. Hence $f={\mathcal K}F\equiv 0$ and axiom 9 is verified. Axiom 10(a) now follows immediately. For axiom 10(b), note that 
 \begin{align*}
 \|f+g\|_{\mathcal H}&=[\langle f+g,f+g\rangle_{\mathcal H}]_0^{1/2}\\
 &=\bigg[\int_X(F+G)(x)(F+G)(x)^*\, dx\bigg]_0^{1/2}\\
 &=\left(\int_X|F(x)+G(x)|^2\, dx\right)^{1/2}\\
 &=\|F+G\|_2\leq\|F\|_2+\|G\|_2
 =\|f\|_{\mathcal H}+\|g\|_{\mathcal H}.
 \end{align*}
 Finally, we have
 $$\|\lambda f\|_{\mathcal H}^2=[\langle\lambda f,\lambda f\rangle_{\mathcal H}]_0=\int_X|\lambda F(x)|^2\, dx\leq C^2|\lambda |^2\int_X|F(x)|^2\, dx=C^2|\lambda |^2\|f\|_{\mathcal H}^2,$$
so that axiom 10(c) is verified. 
\nl 
We need to show that ${\mathcal H} :=\text{Ran}({\mathcal K}) = \clos_{\ell^2}\, \Span \{S_n\}$. Since $\{\phi_n\}_{n=1}^\infty$ is an orthonormal basis for $L^2(X,\C_n)$, for each $F\in L^2(X,\C_n)$ we have $F=\sum\limits_{n=1}^\infty a_n\phi_n$ for some sequence $\{a_n\}_{n=1}^\infty\in \ell^2({\mathbb N})$. Then ${\mathcal K}F=\sum\limits_{n=1}^\infty a_nS_n\in\clos_{\ell^2}\, \Span \{S_n\}$. This gives the equivalence of the definitions of ${\mathcal H}$. 
\nl 
We remark that $\{S_n\}_{n=1}^\infty$ is an orthonormal basis for  ${\mathcal H}$. To see this, note that since $S_n={\mathcal K}\phi_n$, we have $\langle S_n,S_m\rangle_{\mathcal H}=\int\limits_X\phi_n(x)\phi_m(x)^*\, dx=\delta_{n,m}$. Also, if $\langle f,S_n\rangle_{\mathcal H}=0$, for all $n\in \N$, then 
\[
\int_XF(x)\phi_n(x)^*\, dx=\langle f,S_n\rangle_{\mathcal H}=0\text{ for all } n\quad\Longrightarrow\quad F\equiv 0\quad\Longrightarrow\quad f\equiv 0.
\]
Moreover,
\[
\sum_{n\in \N}|\langle f,S_n\rangle_{\mathcal H}|^2=\sum_{n\in \N}\left|\int_XF(x)\phi_n(x)^*\, dx\right|^2=\int_X|F(x)|^2\, dx=\|f\|_{\mathcal H}^2.
\]
Equation \eqref{eval} may be interpreted as giving the boundedness of the evaluation functional $E_t:{\mathcal H}\to\C_n$ defined by $E_t(f) := f(t)$. By the Riesz Representation Theorem we conclude that there exists a $k_t\in{\mathcal H}$ such that 
\[
f(t)=E_t(f)=\langle f,k_t\rangle_{\mathcal H}.
\]
Let $k(t,s)=\langle k_t,k_s\rangle_{\mathcal H}=k_s(t)$. Then
\[
\langle f,k(\mydot ,s)\rangle_{\mathcal H}=\langle f,k_s\rangle_{\mathcal H}=f(s).
\]
Hence, $k(t,s)$ is a reproducing kernel for ${\mathcal H}$. Suppose $k'$ is another such kernel. With $k'_s(t)=k'(s,t)$, we have that
\[
k'_s(t)=\langle k'_s,k_t\rangle_{\mathcal H}=\langle k_t,k'_s\rangle^*_{\mathcal H}=k_t(s)^*=\langle k_t,k_s\rangle^*=\langle k_s,k_t\rangle_{\mathcal H}=k_s(t),
\]
i.e., $k'(s,t)=k(s,t)$ for all $s,t\in T$. As $\{S_n\}_{n=1}^\infty$ is an orthonormal basis for ${\mathcal H}$, we obtain
\[
k_t=\sum_n\langle k_t,S_n\rangle_{\mathcal H}S_n=\sum_n\langle S_n,k_t\rangle^*_{\mathcal H}S_n=\sum_nS_n(t)^*S_n,
\]
so that 
\begin{align*}
k(t,s)=\langle k_s,k_t\rangle_{\mathcal H}&=\bigg\langle\sum_{n\in \N} S_n(s)^*S_n,\sum\limits_{m\in \N}S_m(t)^*S_m\bigg\rangle_{\mathcal H}\\
&=\sum\limits_{n,m\in \N}S_n(s)^*\langle S_n,S_m\rangle_{\mathcal H}S_m(t)=\sum\limits_{n\in \N}S_n(s)^*S_n(t).
\end{align*}
Note also that 
\begin{align*}
\int_X K(x,s)^*K(x,t)\, dx&=\int_X\left(\sum\limits_{n\in\N}\phi_n(x)^*S_n(s)\right)^*\left(\sum\limits_{m\in\N}\phi_m(x)^*S_m(t)\right)\, dx\\
&=\sum\limits_{n,m\in \N}S_n(s)^*\int_X\phi_n(x)\phi_m(x)^*\, dx\, S_m(t)\\
&=\sum\limits_{n\in\N}S_n(s)^*S_n(t)=k(t,s).
\end{align*}
Finally, as $K(x,t_n)=\phi_n(x)^*$, we obtain 
\begin{equation}
\langle f,S_n\rangle_{\mathcal H}=\int_X F(x)\phi_n(x)^*\, dx=\int_X F(x) K(x,t_n)\, dx=f(t_n),\label{sampling}
\end{equation}
and therefore, since $\{S_n\}_{n=1}^\infty$ is an orthonormal basis for ${\mathcal H}$, (\ref{sampling}) yields
\[
f=\sum\limits_{n\in\N}\langle f,S_n\rangle_{\mathcal H}S_n=\sum\limits_{n\in\N}f(t_n)S_n.\qedhere
\]
\end{proof}

\begin{remark}
The above results continue to hold when the index set $\N$ is replaced by any countably infinite set.
\end{remark}

For our purposes we choose $T := \R$, $\{t_k\}_{k\in \Z} := \Z$, and for the interpolating function, $S_{k} := L_{q}(\mydot - k)$, $q\in Q_R$. Then Theorem \ref{qsampthm} implies the next result.

\begin{theorem}\label{S Abtastsatz}
Let ${\mathcal H} :=\clos_{\ell^2}\, \Span \{B_q (\mydot - k)\}$ where $\Sc q>1$ and $q\in Q_R$. Then, for all $f\in{\mathcal H}$,
\[
f(t)=\lim_{N\to\infty}\sum_{k=-N}^N f(k)L_q (t-k)
\]
where the convergence is in the $L^2({\mathbb R})$ norm, pointwise, and uniform on compact sets.
\end{theorem}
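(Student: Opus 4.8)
The plan is to obtain the statement as a direct specialization of Theorem~\ref{qsampthm}, and then to upgrade the abstract norm convergence to the three announced modes. First I would apply Theorem~\ref{qsampthm} (together with the Remark permitting an arbitrary countable index set, here $\Z$) with $T:=\R$, $t_k:=k$, and $S_k:=L_q(\mydot-k)$. Hypothesis~(1) is exactly the interpolation property $L_q(l-k)=\delta_{k,l}$, which is \eqref{L} (equivalently $L_q(m)=\delta_{m,0}$). Hypothesis~(2) is the requirement that $\sum_k|L_q(t-k)|^2\le M$ uniformly in $t$; this follows from the pointwise decay estimate \eqref{Lqest}: since $\Sc q>1$ forces $\lfloor\Sc q\rfloor\ge 1$, the at most two terms with $|t-k|<1$ are controlled by $\|L_q\|_\infty$, while the tail $\sum_{|t-k|\ge 1}|t-k|^{-2\lfloor\Sc q\rfloor}$ is dominated by $2\sum_{n\ge 1}n^{-2}$, a bound independent of $t$.

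The second step is to identify the abstract module $\mcH=\clos_{\ell^2}\Span\{L_q(\mydot-k)\}$ produced by Theorem~\ref{qsampthm} with the space $V_q=\clos_{\ell^2}\Span\{B_q(\mydot-k)\}$ appearing in the statement. One inclusion is immediate from \eqref{compint2}: we have $L_q=\sum_k c_k^{(q)}B_q(\mydot-k)$ with $\{c_k^{(q)}\}\in\ell^1$ by \eqref{ckest} (as $\lfloor\Sc q\rfloor+1\ge 2$), so every shift $L_q(\mydot-m)$, and hence all of $\mcH$, lies in $V_q$. For the reverse inclusion I would invert the interpolation filter: the symbol $\sum_k B_q(k)z^k$ is invertible on $|z|=1$ precisely because $q\in Q_R$, its reciprocal is $\phi_q(\xi)=\sum_k c_k^{(q)}e^{ik\xi}$ by \eqref{c FS}, and the sequence $\{B_q(k)\}_k$ is itself in $\ell^1$ (the Fourier series $\sum_k B_q(k)e^{ik\xi}$ converges absolutely). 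Reading \eqref{compfundspline} as $\wh{B}_q=(\sum_k B_q(k)e^{ik\xi})\,\wh{L}_q$ and transforming back expresses $B_q$ as an $\ell^1$-combination of integer shifts of $L_q$, giving $V_q\subseteq\mcH$. With $\mcH=V_q$ established, Theorem~\ref{qsampthm} yields $f=\sum_k f(k)L_q(\mydot-k)$ in the $\mcH$-norm for every $f\in V_q$, and the Parseval identity in $\mcH$ gives $\sum_k|f(k)|^2=\|f\|_\mcH^2<\infty$, i.e.\ $\{f(k)\}_k\in\ell^2$.

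The third step upgrades the convergence. For uniform (hence pointwise) convergence on all of $\R$, I would estimate the tail of $S_N(t):=\sum_{|k|\le N}f(k)L_q(t-k)$ by the triangle inequality, the submultiplicativity $|ab|\le C|a||b|$ on $\bH_\C$ (as in axiom~10(c)), and the Cauchy--Schwarz inequality:
\[
\Big|\sum_{|k|>N}f(k)L_q(t-k)\Big|\le C\Big(\sum_{|k|>N}|f(k)|^2\Big)^{1/2}\Big(\sum_{k}|L_q(t-k)|^2\Big)^{1/2}\le C\sqrt{M}\,\Big(\sum_{|k|>N}|f(k)|^2\Big)^{1/2},
\]
which tends to $0$ uniformly in $t$ since $\{f(k)\}_k\in\ell^2$. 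For $L^2(\R)$-convergence I would first note that $\{L_q(\mydot-k)\}$ is a Bessel sequence in $L^2(\R,\bH_\C)$: the bounded $2\pi$-periodic multiplier $m_q$ with $\wh{L}_q=m_q\,\wh{B}_q$ (read off from \eqref{compint2}) gives $\sum_k|\wh{L}_q(\xi+2\pi k)|^2\le C^2\|m_q\|_\infty^2\sum_k|\wh{B}_q(\xi+2\pi k)|^2$, and the latter periodization is bounded because $\{B_q(\mydot-k)\}$ is Bessel (an inherited $L^2$/decay property of the fractional B-splines). The Bessel bound makes the partial sums $S_N$ Cauchy in $L^2(\R)$, hence convergent to some $g\in L^2(\R)$; passing to an a.e.-convergent subsequence and comparing with the everywhere pointwise limit already obtained forces $g=f$, so $S_N\to f$ in $L^2(\R)$ as well.

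A remark on the main obstacle. The verification of hypotheses~(1)--(2) and the tail estimate are routine given \eqref{L} and \eqref{Lqest}; the crux is the identification $\mcH=V_q$, which is where the hypothesis $q\in Q_R$ enters decisively, through invertibility of the interpolation filter, and where the non-commutativity of $\bH_\C$ must be tracked carefully. In particular the filter multiplier and its inverse act on one fixed side, and only the inequality $|ab|\le C|a||b|$ (rather than an exact multiplicativity of $|\mydot|$ on complex quaternions) is available for the Bessel bound. Once $\mcH=V_q$ and $\{f(k)\}_k\in\ell^2$ are in hand, the three convergence modes follow as above, with uniform convergence in fact holding on all of $\R$, slightly strengthening the claimed uniformity on compact sets.
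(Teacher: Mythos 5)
Your proposal is correct, and its skeleton is the same as the paper's: specialize Theorem \ref{qsampthm} (via the Remark, with index set $\Z$) to $S_k = L_q(\mydot - k)$, $t_k = k$; identify $\clos_{\ell^2}\Span\{B_q(\mydot-k)\}$ with $\clos_{\ell^2}\Span\{L_q(\mydot-k)\}$ by convolving coefficient sequences and applying Young's inequality in both directions; then upgrade the abstract $\mcH$-norm convergence to the three stated modes. It differs, however, at two technical junctures, in each case substituting a more elementary ingredient. First, to get $\{c_k^{(q)}\}\in\ell^1$ (needed for $\mcH\subseteq V_q$) you cite the decay estimate \eqref{ckest}, whereas the paper invokes Wiener's Tauberian theorem: $\sum_k B_q(k)e^{ik\xi}$ is an absolutely convergent, zero-free Fourier series, hence so is its reciprocal; both routes are valid, and yours renders Wiener's theorem unnecessary since \eqref{ckest} is already established. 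Second, for $L^2(\R)$-convergence the paper instantiates the Kramer machinery concretely on $I=[-\pi,\pi]$ with the Fourier basis, computes $\wh{\mcK F} = F\,\wh{L}_q$, and proves the \emph{two-sided} norm equivalence \eqref{norm eq} between $\|\mydot\|_{L^2(\R)}$ and the abstract $\mathrm{Ran}(\mcK)$-norm (upper and lower periodization bounds), so that the convergence furnished by Theorem \ref{qsampthm} \emph{is} $L^2(\R)$-convergence; you prove only the one-sided Bessel bound and identify the $L^2$-limit with the everywhere-pointwise limit via an a.e.-convergent subsequence. Your version is lighter, as it avoids the lower frame bound; the paper's version buys more, namely the Riesz-basis-type equivalence $\|f\|_{L^2(\R)}\simeq\|f\|_{\mcH}$, which is of independent interest. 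Similarly, you verify hypothesis (2) of Theorem \ref{qsampthm} directly from \eqref{Lqest}, while the paper bounds $\sum_n|L_q(t-n)|^2$ by Young's inequality applied to $L_q=\sum_k c_k^{(q)}B_q(\mydot-k)$ together with the decay of $B_q$; again both work. Two small glosses you should patch: the inference ``every shift of $L_q$ lies in $V_q$, hence all of $\mcH$ does'' still requires the convolution step ($d\in\ell^2$, $c^{(q)}\in\ell^1 \Rightarrow d*c^{(q)}\in\ell^2$), exactly as in the reverse inclusion; and the bound on the finitely many near terms in hypothesis (2) uses $\|L_q\|_\infty<\infty$, which should be justified by $\wh{L}_q\in L^1(\R)$ (stated after \eqref{L}).
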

\begin{proof}
We first show that ${\mathcal H}={\mathcal H}':=\clos_{\ell^2}\, \Span \{L_q (\mydot - k)\}$. Let $f\in{\mathcal H}$, i.e., $f(t)=\sum\limits_kd_kB_q(t-k)$ with $\{d_k\}_{k\in{\mathbb Z}}\in\ell^2({\mathbb Z})$. Then 
\begin{align*}
f(t)&=\sum_kd_k\sum_\ell B_q(\ell )L_q(t-\ell)\\
&=\sum_\ell\left(\sum_kd_kb^q_{\ell -k}\right)L_q(t-\ell )=\sum_k(d*b^q)_\ell L_q(t-\ell)
\end{align*}
where $b^q_k=B_q(k)$ $(k\in{\mathbb Z})$. Since $\{d_k\}_{k\in{\mathbb Z}}\in\ell^2({\mathbb Z})$ and $b^q\in\ell^1({\mathbb Z})$, an application of Young's inequality gives
$$\|d*b^q\|_{\ell^2}\leq C\|d\|_{\ell^2}\|b^q\|_{\ell^1}$$
from which we see that $f\in{\mathcal H}'$ and  consequently ${\mathcal H}\subset{\mathcal H}'$.
Note that the function $\sum\limits_k\widehat{B_q}(\xi +2\pi k)$ has absolutely convergent Fourier series $\sum\limits_\ell B_q(\ell )e^{i\ell\xi}$ (since  $\Sc q_1$). Also, $\sum\limits_k\widehat{B_q}(\xi +2\pi k)$ is zero-free, and we conclude from Wiener's Tauberian theorem \cite{katz} that $\dfrac{1}{\sum\limits_k\widehat{B_q}(\xi +2\pi k)}$ has absolutely convergent Fourier series, i.e., 
\[
\dfrac{1}{\sum\limits_k\widehat{B_q}(\xi +2\pi k)}=\sum_kc_k^qe^{ik\xi}
\]
with $\sum\limits_k|c_k^q|<\infty$. Suppose $f\in{\mathcal H}'$, i.e., $f(t)=\sum\limits_kd_kL_q(t-k)$ with $\{d_k\}_{k=-\infty}^\infty\in\ell^2({\mathbb Z})$. Then 
$$f(t)=\sum_kd_k\sum_jc^q_jB_q(t-j-k)=\sum_j(d*c^q)_jB_q(t-j).$$
But Young's theorem gives
$\|d*c^q\|_{\ell^2}^2\leq C\|c^q\|_{\ell_1}\|d\|_{\ell^2}$,
and therefore $f\in{\mathcal H}$ so that ${\mathcal H}'\subset{\mathcal H}$. We conclude that ${\mathcal H}={\mathcal H}'$.

In the spirit of Theorem 7, let $I=[-\pi ,\pi ]$ and $\phi_n(x)=\dfrac{e^{inx}}{\sqrt{2\pi}}$ $(n\in{\mathbb Z})$. Then $\{\phi_n\}_{n=-\infty}^\infty$ is an orthonormal basis for $L^2(I)$. Let $S_m(t)=L_q(t-m)$ and $t_m=m$ $(m\in{\mathbb Z})$. Then 
$$K(x,t)=\sum_{n=-\infty}^\infty\overline{\phi_n(x)}S_n(t)=\frac{1}{\sqrt{2\pi}}\sum_{n=-\infty}^\infty e^{-inx}L_q(t-n).$$
Therefore, if $F\in L^2(I)$,
$${\mathcal K}F(t)=\frac{1}{\sqrt{2\pi}}\int_0^1F(x)\sum_ke^{-ikx}L_q(t-k)\, dt=\sum_k\hat F(k)L_q(t-k)$$
where $\hat F(k)=\dfrac{1}{\sqrt{2\pi}}\int_0^1F(x)e^{-ikx}\, dx$ is the $k$-th Fourier coefficient of $F$. Furthermore,
$$\widehat{{\mathcal K}F}(\xi )=\sum_k\hat f(k)\widehat{L_q}(\xi)e^{ik\xi}=F(\xi )\widehat{L_q}(\xi ).$$
By the proof of Theorem \ref{qsampthm}, $\text{Ran}({\mathcal K})$ is a Hilbert space with inner product
$$\langle f,g\rangle_{\text{Ran}({\mathcal K})}=\langle{\mathcal K}F,{\mathcal K}G\rangle_{\text{Ran}({\mathcal K})}=\langle F,G\rangle_{L^2[-\pi ,\pi ]}.$$
Let $f={\mathcal K}F$ for some $F\in L^2[-\pi ,\pi ]$. Then 
\begin{align*}
\|f\|_{L^2({\mathbb R})}^2=\|{\mathcal K}F\|_{L^2({\mathbb R})}^2&=\int_{-\infty}^\infty |F(\xi )|^2|\widehat{L_q}(\xi )|^2\,  d\xi\\
&=\int_{-\pi}^{\pi}|F(\xi )|^2\sum_k|\widehat{L_q}(\xi +2\pi k)|^2\, d\xi\\
&\leq C\|F\|_{L^2[-\pi ,\pi]}^2=\|f\|_{\text{Ran}({\mathcal K})}^2
\end{align*}
where 
$$C=\frac{\sup_{|\xi|\leq\pi}\sum_k|\widehat{B_q}(\xi +2\pi k)|^2}{\inf_{|\xi |\leq\pi}\left|\sum_\ell\widehat{B_q}(\xi +2\pi\ell )\right|^2}<\infty$$
since $q\in Q_R$ and $\Sc q>1$.
On the other hand,
\begin{align*}
\|f\|_{L^2({\mathbb R})}^2=\|{\mathcal K}F\|_{L^2({\mathbb R})}^2&=\int_{-\pi}^{\pi}|F(\xi )|^2\sum_k|\widehat{L_q}(\xi +2\pi k)|^2\, d\xi\\
&\geq c\int_{-\pi}^{\pi}|F(\xi )|^2\, d\xi=c\|F\|_{L^2[-\pi,\pi]}^2=c\|f\|_{\text{Ran}({\mathcal K})}^2
\end{align*}
where 
$$c=\frac{\inf_{|\xi |\leq\pi}\sum_k|\widehat{B_q}(\xi +2\pi k)|^2}{\sup_{|\xi |\leq\pi}\left|\sum_\ell\widehat{B_q}(\xi +2\pi\ell )\right|^2}\geq\frac{(2/\pi )^{\Sc q}}{\sup_{|\xi |\leq\pi}\left(\sum_\ell |\widehat{B_q}(\xi +2\pi\ell )|\right)^2}>0.$$
We therefore have 
\begin{equation}
\|f\|_{L^2({\mathbb R})}\simeq\|f\|_{\text{Ran}({\mathcal K})}\simeq\|f\|_{\mathcal H}\label{norm eq}
\end{equation}
 for all $f\in{\mathcal H}$. 
 By Theorem \ref{qsampthm}, each $f\in{\mathcal H}$ admits the sampling expansion 
$f(t)=\sum\limits_{k=-\infty}^\infty f(k)L_q(t-k)$ with convergence in the norm of $\text{Ran}({\mathcal K})$, or equivalently (by (\ref{norm eq})), in the $L^2({\mathbb R})$ norm. By (\ref{eval}), norm convergence implies pointwise convergence and uniform convergence on compact sets  since an application of Young's inequality gives\
\begin{align*}
\sum_{n=-\infty}^\infty |L_q(t-n)|^2&=\sum_{n=-\infty}^\infty\bigg|\sum_{k=-\infty}^\infty c_k^qB_q(t-n-k)\bigg|^2\\
&\leq\sum_{k=-\infty}^\infty |c_k^q|^2\bigg(\sum_{n=-\infty}^\infty |B_q(t-n)|\bigg)^2\\
&\leq C\bigg(\inf_{|\xi |\leq\pi}\bigg|\sum_{k=-\infty}^\infty\widehat{B_q}(\xi +2\pi k)\bigg|\bigg)^{-2}\leq C'<\infty
\end{align*}
where we have used the decay estimate on $B_q$ which is valid since $\Sc q >1$.
\end{proof}

\section{Summary}
We constructed fundamental cardinal B-splines $L_q$ of quaternionic orders $q$ where $q$ belongs to a certain nonempty region in $\bH_\R$. These quaternionic splines satisfy the interpolation conditions $L_q (m) = \delta_{m,0}$, $m\in \Z$. The construction employs interesting properties of an associated quaternionic Hurwitz zeta function and the existence of complex quaternionic inverses. We showed that the cardinal fundamental splines of quaternionic order fit into the setting of Kramer's Lemma and allow for a family of sampling (respectively, interpolation) series.

\section{Acknowledgements}
The authors wish to thank the anonymous referees whose insightful questions have led to significant improvements to this paper.

\end{document}